\documentclass[12pt, english, reqno]{amsart}

\usepackage{a4wide}

\usepackage[utf8]{inputenc}
\usepackage{amsmath}
\usepackage{mathrsfs}
\usepackage{amsthm,amssymb}
\usepackage{nicefrac,amsfonts}
\usepackage{thmtools,mathtools,leftindex,leftidx}
\usepackage{caption,subcaption}
\usepackage{epsfig,graphicx,graphics,color,tikz,tikz-cd,tcolorbox}
\usetikzlibrary{tikzmark,arrows.meta}
\usepackage{enumerate,enumitem}
\usepackage[sort,nocompress,noadjust]{cite}
\usepackage{xspace}
\usepackage{bbm}
\usepackage{hyperref}
\usepackage[capitalise]{cleveref}
\usepackage{float}
\usepackage{makecell}
\usepackage{bm}
\theoremstyle{plain}

\newtheorem{theorem}{Theorem}[section]
\newtheorem{proposition}[theorem]{Proposition}
\newtheorem{corollary}[theorem]{Corollary}
\newtheorem{lemma}[theorem]{Lemma}

\theoremstyle{definition}

\newtheorem{remark}[theorem]{Remark}

\newtheorem{definition}[theorem]{Definition}
\newtheorem{example}[theorem]{Example}
\newtheorem{conjecture}[theorem]{Conjecture}

\DeclareMathOperator{\rks}{rks}
\DeclareMathOperator{\rk}{rk}
\DeclareMathOperator{\Sym}{Sym}
\DeclareMathOperator{\Gr}{Gr}
\DeclareMathOperator{\IN}{IN}
\DeclareMathOperator{\St}{St}
\DeclareMathOperator{\Tor}{Tor}

\renewcommand{\H}{\mathsf{H}}
\newcommand{\U}{\mathsf{U}}
\newcommand{\M}{\mathsf{M}}

\newcommand{\exc}{\operatorname{exc}}

\title[]{Augmented singular cohomology, uniform matroids, and real-rootedness}

\author[K. Binder]{Kyle Binder}
\address{(K.Binder)
  Department of Mathematics, Louisiana State University, Baton Rouge, United States.
}

\email{kbinde1@lsu.edu}

\author[L. Vecchi]{Lorenzo Vecchi}
\address{(L. Vecchi)
  Department of Mathematics, KTH Royal Institute of Technology, Stockholm, Sweden.
}

\email{lvecchi@kth.se}

\keywords{
Matroids, 
Bergman fans, 
toric varieties, 
singular cohomology rings, 
Strong Lefschetz properties, 
Hodge--Poincaré polynomials, 
real-rootedness}
\subjclass[2020]{05B35, 14M25, 05E14, 12D10, 55N10}
\begin{document}
\begin{abstract}
    We study the singular cohomology rings of toric varieties associated with several fans arising from uniform matroids. 
    These rings generalize the Chow and augmented Chow rings of matroids. 
    For the singular cohomology ring arising from the augmented Bergman fan of a uniform matroid, we construct an explicit basis 
     derived from the retral basis for the singular
    cohomology ring of a uniform matroid introduced by the first author. 
    We prove that the augmented Bergman fan does not yield a singular cohomology ring that satisfies the quasi-projective Strong Lefschetz property, whereas a suitable modification of the fan does. 
    We then investigate the zeros of the corresponding refined Hodge--Poincaré polynomials. 
    For uniform matroids, we prove that the refined Hodge--Poincaré polynomials associated with both the singular cohomology ring and the modified augmented singular cohomology ring are real-rooted. The former result resolves a conjecture of the first author.
    These results extend the real-rootedness theorem of Brändén and the second author for the Chow polynomials of uniform matroids. 
    Finally, we relate the failure of real-rootedness for the augmented singular cohomology ring to the failure of Lefschetz properties.
\end{abstract}
\maketitle
\section{Introduction} 
The \emph{Chow ring} $\underline{A}^\bullet(\M)$ of a matroid $\M$ is one of the central objects in modern matroid theory. 
Introduced through the work of Feichtner and Yuzvinsky \cite{feichtnerYuzvinsky} as the Chow ring of the toric variety of a polyhedral fan $\underline{\Sigma}_{\M}$ called the \emph{Bergman fan}, it has played a fundamental role in the development of combinatorial Hodge theory and in the proof of several long-standing conjectures concerning the characteristic polynomial and the number of independent sets of a matroid \cite{AHK}. 
A different fan called the \emph{augmented Bergman fan} \(\Sigma_{\M}\) was later introduced in \cite{semismall}; the Chow ring of the associated toric variety is known as the \emph{augmented Chow ring} of \(\M\), denoted by \(A^\bullet(\M)\). 
A key feature of the Chow ring and augmented Chow ring is that they satisfy the K\"ahler package: Poincar\'e duality, the Hard Lefschetz theorem, and the Hodge--Riemann relations \cite{AHK,semismall}.

The Hilbert--Poincar\'e series of $\underline{A}^\bullet(\M)$ and \(A^\bullet(\M)\), defined as
\[
\underline{\H}_\M(x) = \sum_{j}\dim \underline{A}^j(\M) \cdot x^j \quad\text{and}\quad \H_\M(x) = \sum_{j}\dim A^j(\M) \cdot x^j,
\]
are respectively known as the \emph{Chow polynomial} and \emph{augmented Chow polynomial} of \(\M\) \cite{FMSV}. The K\"ahler package implies that both polynomials are palindromic and unimodal. 
The following conjecture has been proposed independently by Huh--Stevens \cite{Stevens} and Ferroni--Schröter \cite{ferroniSchroter}. 
\begin{conjecture}
    For every matroid \(\M\), \(\underline{\H}_\M(x)\) and \(\H_\M(x)\) have only real zeros.
\end{conjecture}
Since a polynomial with nonnegative coefficients and only real zeros has a log-concave, and hence unimodal, coefficient sequence, real-rootedness would place the known numerical properties of the (augmented) Chow ring in a stronger and more rigid framework.
The real-rootedness conjecture remains open for general matroids, but it has been established for several important families. For the class of uniform matroids, real-rootedness was settled for Chow polynomials in \cite{brandenVecchiUniform} and augmented Chow polynomials in \cite{FMSV}. Related real-rootedness results were also established for (augmented) Chow polynomials of classes of highly regular graded posets in the sense of \cite{FMV} in \cite{brandenVecchiNonnegative, CFL, hosterStump}.

In \cite{binderSingularCohomology}, the first author recently introduced the 
\emph{singular cohomology ring} of a loopless matroid $\M$. 
It is defined as the singular cohomology ring 
$ H^{\bullet}(X_{\underline{\Sigma}_\M})$
of the smooth toric variety associated with the Bergman fan $\underline{\Sigma}_\M$. Equivalently, it can be described as the Koszul homology of the Stanley--Reisner ring $\mathbb{Q}[\underline{\Sigma}_\M]$ 
viewed as an algebra over $ \Sym(N_{\mathbb{Q}}^{\vee}) $, where \(N_{\mathbb{Q}}^{\vee}\) is the rationalization of the character lattice of $ X_{\underline{\Sigma}_{\M}} $. 
This construction extends the ordinary Chow ring, as $H_{0}(K(\mathbb{Q}[\underline{\Sigma}_{\M}])) $ is naturally isomorphic to $\underline{A}^\bullet(\M)$.
Unlike the Chow ring, however, the full singular cohomology ring does not satisfy the usual K\"ahler package. 

For a uniform matroid $\U_{r,n}$ of rank $r $ on the ground set 
$[n] = \left\{1, \dots, n \right\} $, the first author constructed an explicit ``retral'' basis for the singular cohomology ring in terms of \emph{retral} and \emph{weakly retral} flags of flats together with certain admissible exterior-algebra elements \cite{binderSingularCohomologyUniform}. 
He also proved that the singular cohomology ring of a uniform matroid satisfies a version of the Strong Lefschetz property---the \emph{quasi-projective Strong Lefschetz property}---which is 
defined with respect to the associated gradeds of the weight filtration. 

Our first main contribution is to extend this theory to the augmented Bergman fan. 
We define the \emph{augmented singular cohomology ring} of a uniform matroid, that is, the singular cohomology ring of the toric variety of the augmented Bergman fan, \(H^\bullet (X_{\Sigma_{\U_{r,n}}})\).
We provide an explicit basis for this ring, using the corresponding retral bases from the non-augmented case.
For a proper flat $ F $ of the uniform matroid \(\U_{r,n}\), let
\begin{equation*}
    \mathcal B^F_{\U_{r,n}} = \left\{x_F\cdot x_{\mathcal F} \otimes \xi : x_{\mathcal F} \otimes \xi \text{ is an element of the retral basis for $H^\bullet(X_{\underline{\Sigma}_{\U_{r,n}/F}})$} \right\},
\end{equation*}
and let
\begin{equation*}
    \mathcal B^{\IN}_{\U_{r,n}} = \left\{ \prod_{j \in B}y_j \otimes \ell_J : \text{
    $B\in \mathcal B(\U_{r,n})$, $J \neq \varnothing$, $J \cap B = \varnothing$, and 
    $\min J < \min B$} \right\},
\end{equation*}
where \(\mathcal B(\U_{r,n})\) denotes the set of bases of \(\U_{r,n}\) and \(\ell_J = \ell_{j_1} \wedge\ell_{j_2} \wedge \cdots \wedge \ell_{j_{m}}\) for the ordered set \(J = \{j_1 < j_2 < \cdots < j_m\}\); the condition \(\min J < \min B\) is always satisfied if \(r=0\).
\begin{theorem}\label{thm:main-augmented-basis}
    The set \[
    \{ 1 \} \sqcup \bigsqcup_{\textrm{$F$ proper flat}}
    \mathcal{B}^{F}_{\U_{r,n}} \sqcup \mathcal{B}^{\IN}_{\U_{r,n}} \] is a basis of the augmented singular cohomology ring \(H_\bullet(K(\mathbb Q[\Sigma_{\U_{r,n}}]))\cong H^\bullet(X_{\Sigma_{\U_{r,n}}})\).
\end{theorem}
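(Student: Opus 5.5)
We will compute the Koszul homology of $\mathbb Q[\Sigma_{\U_{r,n}}]$ over $\Sym(N^\vee_{\mathbb Q})$ by filtering the Stanley--Reisner ring according to the cones of the augmented Bergman fan. Recall that the cones of $\Sigma_{\M}$ are indexed by compatible pairs $(I,\mathcal F)$, where $I$ is an independent set and $\mathcal F$ is a flag of proper flats with $I\subseteq \min\mathcal F$. The rays are the $y_i$ and the $x_F$ for proper flats $F$, and $N^\vee_{\mathbb Q}$ has dimension $n$, with coordinates $\ell_1,\dots,\ell_n$.

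The plan splits the Stanley--Reisner ring as a $\Sym(N^\vee)$-module. Let $\mathcal I$ be the ideal generated by all $x_F$ with $F$ proper. Then there is a short exact sequence
\[
0\to \mathcal I\to \mathbb Q[\Sigma_{\U_{r,n}}]\to \mathbb Q[\Sigma_{\U_{r,n}}]/\mathcal I\to 0 .
\]
The quotient $\mathbb Q[\Sigma]/\mathcal I$ is the Stanley--Reisner ring of the simplicial complex of independent sets of $\U_{r,n}$, with $y_i\mapsto\ell_i$. Its Koszul homology should be computed by the standard description of the torus-orbit stratum. The pieces of $\mathcal I$ are filtered by the minimal flat $F$ appearing, i.e.\ by the order of flats (largest first). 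The associated graded piece for $F$ is $x_F$ times the star of the ray $x_F$. That star is the product of the augmented fan of the restriction $\M|F$ and the Bergman fan of $\M/F$. The restriction factor contributes only the ``$y$ and $\ell$'' part supported on $F$, which is acyclic except for a trivial contribution, since $\ell$'s supported on $F$ act as $y$'s. What remains is $x_F\cdot H^\bullet(X_{\underline\Sigma_{\U_{r,n}/F}})$, and $\U_{r,n}/F$ is again uniform. Invoking the retral basis theorem of \cite{binderSingularCohomologyUniform} then yields $\mathcal B^F$.

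The key steps, in order, are as follows.

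\begin{enumerate}
\item Verify the product decomposition of each star and the resulting Künneth isomorphism for the graded piece, using that $\Sym(N^\vee)$ splits compatibly as a tensor product of the coordinates in $F$ and those outside $F$.
\item For the independence-complex quotient, compute the Koszul homology directly. For each independent set $B$, the subring $\mathbb Q[y_j: j\in B]$ is free over $\Sym(\ell_j: j\in B)$, so the surviving classes are $\prod_{j\in B}y_j\otimes \ell_J$ with $J$ disjoint from $B$. Here the $\ell_j$ for $j\notin B$ annihilate the face. One then shows that these classes are forced to vanish unless $B$ is a basis. In the exact sequence, non-basis faces are killed by connecting maps into $\mathcal I$, since $\ell_i$ maps to $y_i+\sum_{F\ni i}x_F$.
\item Identify the normalization $\min J<\min B$ (with $J\neq\varnothing$) as the choice of a basis for the cokernel of the relation coming from the Koszul differential. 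Namely, $\sum_j \pm\, y_B\, \ell_{J\cup\{j\}}$-type boundaries allow one to eliminate terms in which the smallest index lies in $B$.
\item Show that the long exact sequence splits into short exact sequences, i.e.\ that the connecting homomorphisms vanish except on the classes accounted for above. This would give the direct sum decomposition with the unit class $1$ coming from $B=\varnothing$, $J=\varnothing$.
\end{enumerate}

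The main obstacle is the last two steps: controlling the connecting maps between the $\mathcal I$-part and the independence-complex part, and deciding exactly which $\prod y_j\otimes\ell_J$ survive. The first approach I would try is a dimension count. I would compute the Hilbert series of the Koszul homology independently via Hochster's formula, or via the Euler characteristic of the toric variety (number of maximal cones, weighted by the $E$-polynomial of the orbits). I would then show that the proposed set is linearly independent by a leading-term argument, ordering monomials so that $x_F$-terms and pure $y$-terms are separated. Matching cardinalities with the dimension count would finish the argument, avoiding explicit computation of every connecting map.
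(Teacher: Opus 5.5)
\textbf{There is a genuine gap.} Your skeleton is the paper's. Modulo the $x_F$, the ring $\mathbb Q[\Sigma_{\U_{r,n}}]$ is $\mathbb Q[\IN_{\U_{r,n}}]$. Filtering the ideal by the minimal flat is the paper's passage from $\Sigma^{i+1}_{\U_{r,n}}$ to $\Sigma^{i}_{\U_{r,n}}$, with graded pieces $x_F\cdot\mathbb Q[\IN_{\U_{|F|,|F|}}\times\underline{\Sigma}_{\U_{r,n}/F}]$. The gap is in steps 2 and 4.

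\textbf{Two corrections.} First, this graded piece is not the star of $\rho_F$ in the whole fan. That star is $\Sigma_{\M|F}\times\underline\Sigma_{\M/F}$, whose first factor is the stellahedral fan $\Sigma_{\U_{|F|,|F|}}$, with nontrivial cohomology once $F\neq\varnothing$. Your claim that the $\ell$'s supported on $F$ ``act as $y$'s'' holds only after the lower flats are removed. Second, the structure map is $\ell_i\mapsto y_i-\sum_{G\not\ni i}x_G$, not $y_i+\sum_{G\ni i}x_G$.

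\textbf{Step 2.} The mechanism is wrong. For $|B|<r$ and $J\neq\varnothing$, the element $y_B\otimes\ell_J$ is not a cycle of $K(\mathbb Q[\IN_{\U_{r,n}}])$ at all, since $y_By_j\neq0$ for $j\notin B$. So nothing is ``killed by connecting maps into $\mathcal I$''. In fact, a nonzero connecting map $H_i(K(\mathbb Q[\IN_{\U_{r,n}}]))\to H_{i-1}(K(\mathcal I))$ would also kill classes of some $\mathcal B^F_{\U_{r,n}}$ in the middle term. That contradicts the statement you are proving, and also your own step 4. Hochster's formula does apply to this quotient, because the $e_i$ form a lattice basis. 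For $i\geq1$ the homology lives in squarefree multidegrees $W$ with $|W|=r+i$, where it has dimension $\binom{r+i-1}{r}=\binom{r+i-1}{i-1}$. The boundaries $d(y_{W\setminus J'}\otimes\ell_{J'})$ with $J'=J\cup\{\min W\}$ then give your normalization $\min J<\min B$. The paper instead inducts by deleting $n$.

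\textbf{Step 4.} This is the heart of the argument, and the fallback you propose cannot settle it.
\begin{itemize}
\item For any module $M$ one has $\sum_i(-1)^i\dim H_i(K(M))_j=\sum_i(-1)^i\binom{n}{i}\dim M_{j-i}$. This depends only on the Hilbert function, so Euler characteristics and $E$-polynomials are blind to connecting maps, whose ranks cancel between adjacent $i$ at fixed $j$.
\item Hochster's formula computes Tor over the polynomial ring on \emph{all} rays, not over the $n$-variable ring $\Sym(N^\vee_{\mathbb Q})$, so it does not apply to $\Sigma_{\U_{r,n}}$.
\item A leading-term argument on cycles does not control boundaries.
\end{itemize}

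\textbf{The missing idea.} Show that each candidate class lifts to an honest cycle of $K(\mathbb Q[\Sigma_{\U_{r,n}}])$. Then every connecting map vanishes, and each long exact sequence splits into short exact sequences.
\begin{itemize}
\item For a basis $B$, $d(y_B\otimes\ell_J)=0$, because $y_By_j=0$ ($B\cup j$ is dependent) and $y_Bx_G=0$ (no proper flat contains a basis).
\item For $x_Fx_{\mathcal F}\otimes\xi$, let $H$ be the hyperplane of $\mathcal F$, so that $\xi$ is built from factors $\ell_u-\ell_v$ with $u,v\notin H$. Then $y_ux_F=y_vx_F=0$. Any $x_G$ not annihilating $x_Fx_{\mathcal F}$ is comparable to $H$, hence $G\subseteq H$ contains neither $u$ nor $v$. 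So the $x_G$-terms of $d(\ell_u-\ell_v)$ cancel.
\end{itemize}
This is the ``straightforward check'' behind the paper's proof; with it, no dimension count is needed.
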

This construction generalizes the augmented Chow ring, since, in analogy with the non-augmented case, \(H_0(K(\mathbb Q[\Sigma_{\M}]))\) is naturally isomorphic to \(A^\bullet(\M)\).
As a consequence, we find a new retral basis for the augmented Chow ring of uniform matroids; see Corollary~\ref{cor:basis-augmented-chow}. 

In contrast with the non-augmented singular cohomology ring, the augmented singular cohomology ring only satisfies a partial quasi-projective Strong Lefschetz property. We identify the obstruction with the subspace generated by elements in \(\mathcal B^{\IN}_{\U_{r,n}}\).
Motivated by this, we introduce a modification of the augmented Bergman fan, denoted by \(\widetilde{\Sigma}_{\U_{r,n}}\), and study the corresponding \emph{modified augmented singular cohomology ring} of a uniform matroid, \(H^\bullet(X_{\widetilde{\Sigma}_{\U_{r,n}}})\). For this modification, the quasi-projective Strong Lefschetz property is fully recovered.

\begin{theorem}\label{thm:lefschetzModAug}
    Let $ \ell \in \Gr_{2}^{W}H^{2}(X_{\widetilde{\Sigma}_{\U_{r,n}}}) $ be generic. 
    Then the map
    \[ 
        \times \ell^{d} \colon \Gr_{q}^{W}H^{p}(X_{\widetilde{\Sigma}_{\U_{r,n}}}) \to  
                \Gr_{q + 2d}^{W}H^{p+2d}(X_{\widetilde{\Sigma}_{\U_{r,n}}})
    \]
    is injective for $ 1 \leq d \leq r + q -2p $ and surjective for $ d > r + q -2p $. 
\end{theorem}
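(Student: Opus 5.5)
The strategy is to decompose $H^\bullet(X_{\widetilde{\Sigma}_{\U_{r,n}}})$, compatibly with the weight filtration, into pieces on which the quasi-projective Strong Lefschetz property is already known. We then check that a generic $\ell\in\Gr^W_2H^2$ acts diagonally on this decomposition with the correct shift on every piece. Concretely, we first establish for the modified fan the analogue of Theorem~\ref{thm:main-augmented-basis}. The basis should consist of $1$, the blocks $x_F\cdot\mathcal B_{\underline{\Sigma}_{\U_{r,n}/F}}$ for proper flats $F$, and a replacement for $\mathcal B^{\IN}_{\U_{r,n}}$. The modification $\widetilde{\Sigma}$ is designed precisely so that this replacement is again of the form ``divisor times retral basis of a smaller uniform matroid'' (or vanishes in the relevant weights). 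Next we record the bidegrees. For $x_F x_{\mathcal F}\otimes\xi$, the element $x_{\mathcal F}\otimes\xi$ lies in $\Gr^W_{q'}H^{p'}$ of $X_{\underline{\Sigma}_{\U_{r,n}/F}}$. Multiplication by $x_F$ raises both $p$ and $q$ by $2$. Since $\U_{r,n}/F\cong\U_{r-\rk F,\,n-|F|}$, the Lefschetz range of the block works out to $(r-\rk F-1)+q'-2p'$, which should equal $r+q-2p$ after the shift by $x_F$. This bookkeeping is a routine check once the bidegrees are fixed.

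The second step is to filter $H^\bullet$ by the rank of $F$, using the standard semismall-type decomposition argument of \cite{semismall}. The pullback to the star of the ray $\rho_F$ identifies $x_F\cdot H^\bullet(X_{\widetilde\Sigma})$ with a twist of $H^\bullet(X_{\underline{\Sigma}_{\U_{r,n}/F}})\otimes H^\bullet(X_{\widetilde\Sigma_{\U|_F}})$. Along this identification, $\ell$ restricts to a sum $\ell_1\otimes1+1\otimes\ell_2$ of generic classes. We then argue by induction on $r$. The base cases $r\le1$ are direct, since $H^\bullet$ is small and explicit. For the inductive step we use three facts:
\begin{enumerate}
\item the quasi-projective Strong Lefschetz property of $H^\bullet(X_{\underline{\Sigma}_{\U_{r,n}}})$ proved in \cite{binderSingularCohomologyUniform};
\item the inductive hypothesis for $\widetilde\Sigma$ of the restriction;
\item a Künneth/Lefschetz-for-tensor-products lemma, namely that $\mathfrak{sl}_2$-representations tensor with additive ranges.
\end{enumerate}
Together these give Lefschetz on each summand with range $r+q-2p$, and a generic $\ell$ works simultaneously on all finitely many summands.

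The main obstacle is showing that the decomposition is genuinely compatible with $\ell^d$: the summands must be mutually orthogonal, or at least $\ell$ must be triangular with respect to the filtration, and the graded pieces must carry the prescribed ranges. For the unmodified augmented fan this step fails exactly on the span of $\mathcal B^{\IN}$, whose elements $\prod y_j\otimes\ell_J$ sit in weights where the range $r+q-2p$ is off. The key verification is therefore to compute, in $\widetilde\Sigma$, the products of $\ell$ with the replacement of these classes. I would first try to show that they are identified with $x_{\varnothing}$-type classes coming from a cone over $\underline{\Sigma}_{\U_{r,n}}$, so that the Lefschetz property reduces to fact (1). To handle the triangularity, I would pick $\ell=\sum_F c_F x_F+\sum_i c_i y_i$ with generic coefficients and show that the off-diagonal terms raise the rank filtration strictly. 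Injectivity for $d\le r+q-2p$ then follows from injectivity on the associated graded, and surjectivity for larger $d$ follows by the analogous argument on the associated graded or by a dimension count using the basis.
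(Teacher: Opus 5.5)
Your proposal has a genuine gap, and it sits in the step you call routine bookkeeping. For a flat $F$ of rank $k$, the block $x_F\cdot H^\bullet(X_{\underline{\Sigma}_{\U_{r,n}/F}})$ has Lefschetz range $(r-k-1)+q'-2p'$. With $(p,q)=(p'+2,q'+2)$ this becomes $r+q-2p+1-k$, which equals $r+q-2p$ only when $k=1$.

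A concrete example is $\widetilde{\Sigma}_{\U_{2,2}}$, the stellahedral surface, whose cohomology has Hilbert series $1+3x+x^2$ and center $1$. There the block $\mathcal{B}^{\varnothing}=\{x_\varnothing,\,x_\varnothing x_2\}$ lives in degrees $1,2$. Moreover, the piece $\mathbb{Q}\cdot 1$ is concentrated in degree $0$, so the associated graded of $\ell$ kills it. Hence injectivity of $\ell\colon A^0\to A^1$ can never be read off from an associated graded of your rank filtration.

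Replacing the block by the full star $\St(\rho_F)\cong\Sigma_{\U_{k,k}}\times\underline{\Sigma}_{\U_{r,n}/F}$, as in your second step, does not help. Transported to the target, its range is $r+q-2p+1$ for every $k$, off by one in the other direction. The subspaces $x_F\cdot H^\bullet$ are also not independent: in the example, the top class lies in $x_\varnothing A^\bullet$, $x_1A^\bullet$ and $x_2A^\bullet$.

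The semi-small decomposition of \cite{semismall} gets the correct center by tensoring with a factor of one dimension less than the star. It controls the interaction of summands through the Poincar\'e pairing. Here the variety is noncompact ($\widetilde{\Sigma}_{\U_{r,n}}$ is not even pure for $r<n$), and you supply no substitute. Note also that the ``replacement'' for $\mathcal{B}^{\IN}_{\U_{r,n}}$ you plan to analyze is simply empty, since $X_{\IN_{\U_{n,n}}}\cong\mathbb{C}^n$ (Theorem~\ref{thm:modAugBasis}).

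The missing idea is that no $\ell$-compatible decomposition is needed, only a surjection from, and an injection into, objects with known Lefschetz behaviour. The paper first disposes of the column $q=p$ (the augmented Chow ring) and the case $r=n$ (the stellahedral variety). It then uses the truncation sequence that removes the hyperplane rays, with quotient $\widetilde{\Sigma}_{\U_{r-1,n}}$. By Theorem~\ref{thm:modAugBasis}, every basis element in homological degree $\geq 1$ is divisible by some $x_H$. So the Gysin map $\gamma$ is surjective and the connecting map $\delta$ is injective in that range.

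The hyperplane stars are $X_{\Sigma_{\U_{r-1,r-1}}}\times(\mathbb{C}^*)^{n-r}$. Hard Lefschetz for the stellahedral variety gives them range $r+q-2p-1$ in their own bidegree. This is exactly the off-by-one above, and it is harmless because surjectivity passes to quotients and injectivity passes to subobjects:
\begin{itemize}
\item Via $\gamma$, which shifts $(p,q)$ by $(2,2)$, one gets surjectivity for $d>r+q-2p$.
\item Via $\delta$, which shifts by $(-1,-2)$ into the hyperplane stars of $\widetilde{\Sigma}_{\U_{r+1,n}}$ (whose range is one larger), one gets injectivity for $d\leq r+q-2p$.
\end{itemize}
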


We next study the polynomials arising from these cohomology rings. The singular cohomology ring and the corresponding Koszul homology groups carry a bigrading corresponding to the associated graded pieces of the weight filtration
\[ 
    \Gr_{2j}^{W} H^{2j-i}(X_{\underline{\Sigma}_{\M}}) \cong H_{i}\bigl(K(\mathbb{Q}[\underline{\Sigma}_{\M}])
    \bigr)_{j}.
\]
Their dimensions are encoded by the \emph{refined Hodge--Poincar\'e polynomials} 
\[ 
\underline{\H}^i_\M(x) = \sum_j \dim_{\mathbb Q} H_i\bigl(K(\mathbb Q[\underline{\Sigma}_\M])\bigr)_j \cdot x^{j-i},
\] 
where $ 0 \leq i \leq n-r $. For \(i=0\), this polynomial coincides with the Chow polynomial. 
Analogously, we define the \emph{augmented refined Hodge--Poincaré polynomials}
\[
\H^i_{\M}(x) = \sum_j \dim_{\mathbb Q} H_i\bigl(K(\mathbb Q[{\Sigma}_{\M}])\bigr)_j \cdot x^{j-i}\]
and the \emph{modified augmented refined Hodge--Poincaré polynomials} for uniform matroids 
\[\widetilde{\H}^i_{\U_{r,n}}(x) = 
\sum_{j} \dim_{\mathbb{Q}} H_{i} \bigl(K(\mathbb{Q}[\widetilde{\Sigma}_{\U_{r,n}}])\bigr)_{j} \cdot x^{j-i}.\] In degree $i=0$, we have \(H_0(K(\mathbb Q[\widetilde{\Sigma}_{\U_{r,n}}]))\cong A^\bullet(\U_{r,n})\). Consequently, both constructions recover the augmented Chow polynomial: \(\H^0_{\M}(x) = \H_{\M}(x)\) and \(\widetilde{\H}^0_{\U_{r,n}}(x) = \H_{\U_{r,n}}(x)\).

Since the singular cohomology ring does not satisfy the usual K\"ahler package, its refined Hodge--Poincar\'e polynomials need not be palindromic in general. However, the quasi-projective Strong Lefschetz property
for the singular cohomology of uniform matroids implies that the polynomials \(\underline{\H}^i_{\U_{r,n}}(x)\) are unimodal \cite{binderSingularCohomologyUniform}. This led the first author to conjecture that these polynomials are, in fact, real-rooted \cite[Conjecture~5.8]{binderSingularCohomologyUniform}.
Our second main contribution is to resolve this conjecture. 
\begin{theorem}\label{thm:main}
For every $1\leq r\leq n$ and every $0\leq i\leq n-r$, the refined Hodge--Poincar\'e polynomial $\underline{\H}^i_{\U_{r,n}}(x)$ has only real zeros. 
\end{theorem}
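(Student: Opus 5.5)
We will first obtain an explicit combinatorial formula for $\underline{\H}^i_{\U_{r,n}}(x)$ from the retral basis of \cite{binderSingularCohomologyUniform}, and then prove real-rootedness by exhibiting a recursion that preserves interlacing, in the spirit of the argument of Br\"and\'en and the second author for $\underline{\H}_{\U_{r,n}}(x)=\underline{\H}^0_{\U_{r,n}}(x)$ \cite{brandenVecchiUniform}.

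\emph{Counting the retral basis.} A retral basis element $x_{\mathcal F}\otimes\xi$ has an internal degree and a homological degree. We will record its contribution $x^{j-i}$ and split the count by the size of the first (smallest) flat of the flag and the admissible exterior part $\xi$. Since flats of $\U_{r,n}$ are the subsets of size $<r$ together with $[n]$, and contractions of uniform matroids are uniform, $\U_{r,n}/F\cong\U_{r-|F|,\,n-|F|}$. Stripping off the first flat should therefore give a recursion of the form
\[
\underline{\H}^i_{\U_{r,n}}(x)=\sum_{k}\binom{n}{k}\,c_{k}(x)\,\underline{\H}^{i'}_{\U_{r-k,n-k}}(x)+(\text{boundary term}),
\]
where the admissibility conditions on $\xi$ produce a shift between $i$ and $i'$ and simple polynomial factors $c_k(x)$ (typically $x$ or $x+\dots+x^{m}$). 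We expect the cleanest form to come from grouping by $d=n-r$ (the corank, fixed under contraction) and $i$. Before attempting any proof, we will test the recursion against small cases computed directly from the Koszul homology.

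\emph{Real-rootedness.} For fixed corank $d$ and $i$, we expect the polynomials to be expressible through a generating-function or Eulerian-type formula. For $i=0$, \cite{brandenVecchiUniform} writes the Chow polynomial via derangement-like sums and shows the family is interlacing. Our plan is to prove by induction on $r$ the stronger statement that the sequence $\bigl(\underline{\H}^i_{\U_{r,n}}\bigr)$, indexed suitably, forms an interlacing family, that is, its members pairwise interlace. We then apply the standard facts that a nonnegative combination of a pairwise interlacing family is real-rooted, and that operations such as $f\mapsto (1+x)f+x(1-x)f'$ or multiplication by $x$ preserve interlacing; this is Br\"and\'en's theory of interlacing-preserving linear maps. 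A more economical alternative is to express $\underline{\H}^i_{\U_{r,n}}(x)$ as a positive combination of products $x^a\underline{\H}^0_{\U_{r',n'}}(x)$ over contractions. The result would then reduce to the interlacing family already constructed in \cite{brandenVecchiUniform}, provided those summands are compatible (for instance, share a common interlacer).

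\emph{Main obstacle.} We expect the difficulty to lie in the second step: identifying a family that is genuinely closed under the recursion and interlacing. The boundary terms coming from weakly retral flags and the admissibility constraint on $\xi$ are not symmetric, and $\underline{\H}^i$ is not palindromic, so the palindromic tricks used for Chow polynomials (such as the $\gamma$-positivity and symmetric decompositions) are unavailable. Our first attempt will be to find a common interlacer, probably $\underline{\H}^{0}_{\U_{r-1,n-1}}$ or a shifted Eulerian polynomial, for all summands. If that fails, we will instead seek a multivariate stable refinement, keeping track of $k$ by a second variable, and specialize it at the end.
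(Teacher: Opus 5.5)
\textbf{There is a genuine gap: your proposal stops where the proof has to begin.}

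\emph{What is missing from your plan.} The recursion from stripping off the smallest nonempty flat is only conjectured (``should therefore give''). Its factors $c_k(x)$ and its boundary terms are unspecified, and no family closed under it is shown to be interlacing. The decisive step is deferred to ``our first attempt will be \dots; if that fails, \dots''. The ``economical alternative'' has no content on its own. Contracting by a flat of rank $r-1$ gives $\U_{1,n-r+1}$, whose Chow polynomial is $1$. So every polynomial with nonnegative coefficients is a nonnegative combination of the $x^a\underline{\H}^0_{\U_{r',n'}}(x)$, and the whole difficulty is the common-interlacer condition you leave open. (For $i=0$, deferring to \cite{brandenVecchiUniform} is fine, and is what the paper does.)

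\emph{The missing idea: no recursion or induction is needed.}
\begin{enumerate}
\item For $i\ge 1$, \cite{binderSingularCohomologyUniform} gives the closed formula $\underline{\H}^i_{\U_{r,n}}(x)=\sum_{j=0}^{r-1}|W_{j,i}(\U_{r,n})|\,d_j(x)\,x^{r-1-j}$. Each derangement polynomial carries a single monomial shift.
\item Since $x^j d_j(x^{-1})=d_j(x)$, reversing with $\mathcal I_{r-1}$ gives $\underline{\H}^i_{\U_{r,n}}=\mathcal I_{r-1}\bigl(\mathcal D(f)\bigr)$. Here $f=\sum_j|W_{j,i}(\U_{r,n})|\,x^j$ and $\mathcal D$ is the Brändén--Solus deranged map $x^j\mapsto d_j(x)$. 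The coefficient of $x$ in $f$ may be changed freely because $d_1=0$.
\item So the non-palindromicity of $\underline{\H}^i$, which you flag as the main obstacle, is harmless. What is used is the palindromicity of the individual $d_j$.
\item It remains to show that $f$ has a nonnegative expansion in the magic basis $\{x^k(1+x)^{r-1-k}\}$. Then \cite[Corollary~3.7]{branden-solus} gives $A_{r-1}\prec\mathcal D(f)\prec d_{r-1}$, and real-rootedness follows at once. This theorem supplies once and for all the Eulerian-type common interlacers you were planning to search for.
\end{enumerate}

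\emph{How the paper proves the positivity.} It uses an explicit bijection between $W_{j,i}(\U_{r,n})$, for $j\ne 1$, and data $(k,m,A,B,C)$. The bijection rests on the fact that a weakly retral flag with ranks $\{0,j,\dots,r-1\}$ is determined by its rank-$j$ flat and its hyperplane. It yields $|W_{j,i}(\U_{r,n})|=\sum_{k=0}^{j}c_k\binom{r-1-k}{j-k}$, where $c_k=\sum_{m=0}^{r-k-1}\binom{n-m-1}{r-m+i-1}\binom{r-m+i-k-2}{i-1}\ge 0$. This is exactly the $x^j$-coefficient of $\sum_k c_k\,x^k(1+x)^{r-1-k}$. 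Your plan contains neither the closed formula, nor the reversal, nor this magic-basis positivity, and together they are the proof.
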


The theorem extends the real-rootedness result for the Chow polynomial of a uniform matroid, which is recovered for $i=0$. This provides evidence for the following general conjecture. 

\begin{conjecture}
    For every loopless matroid $\M$ of rank $r$ on the ground set $[n]$ and for 
    every $ 0 \leq i \leq n-r $, the refined Hodge--Poincar\'e polynomial 
    $\underline{\H}^i_{\M}(x)$ has only real zeros. 
\end{conjecture}

The proof of Theorem~\ref{thm:main} uses a bijective counting formula for the elements of the retral basis. It also relies on the \emph{deranged map}, a linear transformation introduced by Brändén and Solus that preserves interlacing of polynomials under suitable positivity assumptions \cite{branden-solus}.

From Theorem~\ref{thm:main-augmented-basis} we also find explicit
formulas for the augmented refined Hodge--Poincar\'e polynomials. In this setting, however, a different phenomenon occurs: the polynomials \(\H^i_{\U_{r,n}}(x)\) need not be unimodal and therefore need not be real-rooted. This failure can be viewed as a numerical shadow of the failure of the quasi-projective Strong Lefschetz property for the augmented singular cohomology ring. 
In contrast, Theorem~\ref{thm:lefschetzModAug} implies that the modified augmented refined Hodge--Poincar\'e polynomials \(\widetilde{\H}^i_{\U_{r,n}}(x)\) are unimodal. We strengthen this conclusion by proving that they are real-rooted.

\begin{theorem}\label{thm:main-augmented}
    For every $ 1 \leq r \leq n $ and $ 0 \leq i \leq n-r $, the refined Hodge--Poincar\'e polynomial 
    $\widetilde{\H}^i_{\U_{r,n}}(x)$ has only real zeros. 
\end{theorem}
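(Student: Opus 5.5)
The plan is to reduce Theorem~\ref{thm:main-augmented} to Theorem~\ref{thm:main} through an explicit decomposition of $\widetilde{\H}^i_{\U_{r,n}}(x)$. Then I will run an interlacing argument modeled on the one used for the augmented Chow polynomial in \cite{FMSV}. As a first step I will compute a basis of $H_\bullet(K(\mathbb Q[\widetilde{\Sigma}_{\U_{r,n}}]))$ by the same method as Theorem~\ref{thm:main-augmented-basis}. Its star pieces $\mathcal B^F$, coming from $x_F$ times the retral basis of the contraction $\U_{r,n}/F\cong\U_{r-k,n-k}$ with $k=\operatorname{rk}F$, should survive unchanged. The problematic $\mathcal B^{\IN}$ part is replaced by whatever the modification contributes, and I expect this to be something like a Koszul piece of a Boolean or uniform fan with an explicit, symmetric count. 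Counting the $\binom nk$ flats of each rank, I aim for a formula of the form
\[
\widetilde{\H}^i_{\U_{r,n}}(x)=P^i_{r,n}(x)+\sum_{k=0}^{r-1}\binom{n}{k}\,x\,\underline{\H}^{i}_{\U_{r-k,n-k}}(x),
\]
possibly with shifted indices $i$ for the contraction terms. Here $P^i_{r,n}$ is the contribution of the modified part, which I expect to be a multiple of a simple polynomial such as a binomial-type or Eulerian-type polynomial.

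The second step is to express each $\underline{\H}^i_{\U_{m,N}}$ in the form produced in the proof of Theorem~\ref{thm:main}. That proof uses the bijective count of retral basis elements together with the Brändén--Solus deranged map, so each summand should be the image, under a positivity-preserving linear map, of a polynomial with only real zeros. I will then try to realize the whole sum $\widetilde{\H}^i_{\U_{r,n}}$ as a single image of this kind. The augmented sum over flats corresponds on the input side to a binomial convolution, which amounts to multiplying the generating data by something like $(1+x)^n$ or applying a subdivision-type operator. This is why the $\H_{\U_{r,n}}$ case reduces to the $\underline{\H}$ case in \cite{FMSV}, and I expect the same reduction here.

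If a single-image description does not fit, I will fall back on an interlacing argument. I will show that the family $\{x\,\underline{\H}^{i}_{\U_{r-k,n-k}}(x)\}_{k}$, together with $P^i_{r,n}$, forms an interlacing sequence, meaning that every pair interlaces in the correct order. A nonnegative combination of such a sequence is then real-rooted. The interlacing between consecutive contractions should come from recursions in $n$ and $r$ that are extracted from the deranged-map proof of Theorem~\ref{thm:main}, for instance $\underline{\H}^i_{\U_{m,N}}\preceq\underline{\H}^i_{\U_{m-1,N-1}}$.

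The main obstacle is the first step. I need the precise shape of the modified contribution $P^i_{r,n}$, and in particular how the degree $i$ of the modified piece aligns with the degrees of the star pieces. A mismatch in degree shifts would break the common-interlacer argument. I will first compute small cases such as $\U_{1,n}$ and $\U_{2,3}$ explicitly to fix the shape. I will then verify that $P^i_{r,n}$ interlaces with the $k=0$ term, which I expect to be the delicate inequality.
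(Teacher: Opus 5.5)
Your structural guess is right, but the step you call the main obstacle is easy, and the step that actually proves the theorem is missing. The modification contributes nothing in positive homological degree. The inductive construction now starts from $\IN_{\U_{n,n}}$, whose toric variety is $\mathbb{C}^n$, so the basis is $\{1\}\sqcup\bigsqcup_F\mathcal{B}^F_{\U_{r,n}}$ (Theorem~\ref{thm:modAugBasis}). Hence $P^i_{r,n}=0$ for $i\geq 1$, and
\[
\widetilde{\H}^i_{\U_{r,n}}(x)=x\sum_{k=0}^{r-1}\binom{n}{k}\underline{\H}^i_{\U_{r-k,n-k}}(x)
\]
holds with no index shifts. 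For $i=0$ the polynomial is the augmented Chow polynomial, which is already known to be real-rooted.

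The genuine gap is proving that this binomial convolution is real-rooted. Knowing each summand is real-rooted, or is a $\mathcal{D}$-image, says nothing about the sum. Your fallback rests on pairwise interlacing of the polynomials $x\underline{\H}^i_{\U_{r-k,n-k}}$, which have different degrees. You give no argument for this, and nothing in the proof of Theorem~\ref{thm:main} supplies it. You also mis-describe the mechanism of that proof: the input to $\mathcal{D}$ need not be real-rooted, only nonnegative in the magic basis.

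The missing idea has several steps:
\begin{enumerate}
\item Expand each summand via Lemma~\ref{lem:hpDerangement} and reindex.
\item Use \eqref{eq:eulerian-derangement-identity}, $\sum_k\binom{s}{k}d_{s-k}(x)=A_s(x)$, which turns the derangement polynomials into Eulerian ones. This gives $\widetilde{\H}^i_{\U_{r,n}}=\sum_s\binom{n}{s}c_s\,A_s(x)\,x^{r-s}$ for explicit nonnegative integers $c_s$.
\item Palindromicity of $A_s$ then gives $\mathcal{I}_r(\widetilde{\H}^i_{\U_{r,n}})=\mathcal{A}^\circ(f)$ with $f=\sum_s\binom{n}{s}c_s x^s$.
\end{enumerate}
So the right operator is the Eulerian transformation, not the deranged map. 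Writing $A_s=\mathcal{D}((1+x)^s)$ leaves a stray factor $x$ and a constant term, so the result is not in any evident way the $\mathcal{D}$-image of a magic-positive polynomial.

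You then need two concrete inputs that your plan does not contain. The first is magic-positivity of $f$, a Vandermonde computation (Lemma~\ref{lem:Eulerian-magic-positivity}). The second is Athanasiadis' theorem that $\mathcal{A}^\circ$ sends magic-positive polynomials to real-rooted ones (Theorem~\ref{thm:eulerian-real-rooted}). Your remark about ``multiplying by $(1+x)^n$ or a subdivision operator'' points loosely in this direction but identifies neither ingredient.
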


Since \(\widetilde{\H}^0_{\U_{r,n}}(x) = \H_{\U_{r,n}}(x)\), Theorem~\ref{thm:main-augmented} extends the real-rootedness of the augmented Chow polynomials of uniform matroids to all modified augmented refined Hodge--Poincar\'e polynomials. Thus, the modified augmented Bergman fan extends both the Lefschetz properties and the real-rootedness phenomena associated with the augmented Chow ring. 
The proof of Theorem~\ref{thm:main-augmented} follows the same general strategy as that of Theorem~\ref{thm:main}, but uses the \emph{Eulerian transformation}. Its interlacing-preserving properties were conjectured by Brändén and Jochemko in \cite{branden-jochemko} and proved by Athanasiadis in \cite{athanasiadis}. This approach also extends the method used by Brändén and the second author to prove the real-rootedness of the Chow and augmented Chow polynomials of uniform matroids \cite{brandenVecchiUniform}.

\subsection*{Outline}
In \cref{sec:background}, we review the necessary background on fans associated with matroids, presentations of the singular cohomology rings of smooth toric varieties, and the retral basis for the singular cohomology ring associated with the Bergman fan of a uniform matroid. 
In \cref{sec:augmented-singular-cohomology} we introduce the augmented singular cohomology ring and prove Theorem~\ref{thm:main-augmented-basis}. 
In \cref{sec:modified-augmented-fan} we introduce the modified augmented Bergman fan and singular 
cohomology ring.
In \cref{sec:lefschetz-properties}, we study the Lefschetz properties of the augmented and modified augmented singular cohomology rings. More precisely, in \cref{sec:lefschetz-augmented} we describe the failure of the quasi-projective Strong Lefschetz property for the augmented singular cohomology ring, while in \cref{sec:lefschetz-modified} we prove Theorem~\ref{thm:lefschetzModAug}. 
In \cref{sec:combinatorics-weakly-retral}, we derive combinatorial formulas for the refined Hodge--Poincar\'e polynomials associated with all three rings. Finally, in \cref{sec:realRooted}, we prove Theorems~\ref{thm:main} and~\ref{thm:main-augmented}.

\subsection*{Acknowledgments}
This project was started at the 2026 Combinatorics at the Confluence conference.
We thank the organizers, Carnegie Mellon University, and the University of Pittsburgh for their hospitality. We thank Petter Brändén and Elena Hoster for helpful discussions. 
KB was partially supported by NSF grant DMS-2231492.


\section{Background}\label{sec:background}
In this section we review the necessary background on Bergman fans, the singular cohomology of smooth toric
varieties, and the retral basis for the singular cohomology of a uniform matroid defined in \cite{binderSingularCohomologyUniform}.

\subsection{The (augmented) Bergman fan}
We assume familiarity with standard matroid concepts, such as independent sets, bases, and flats. We denote by \(\mathcal B(\M)\) the set of bases of $ \M $, by \(\mathcal L(\M)\) the lattice of flats of a matroid \(\M\), and by \(\mathcal H(\M)\) the set of hyperplanes of \(\M\), i.e., the corank-1 flats. This paper focuses on uniform matroids. The uniform matroid of rank \(r\) on the ground set \([n] = \{1,2,\ldots,n\}\) is denoted by \(\U_{r,n}\). The contraction of a matroid \(\M\) by a flat \(F\) is denoted by \(\M/F\), and we will identify $ \mathcal{L}(\M/F)$ with the upper interval $ [F,\hat{1}] $ in $ \mathcal{L}(\M) $.

We recall the construction of the augmented Bergman fan $\Sigma_{\U_{r,n}}$ for uniform matroids, following \cite[Section~2.1]{semismall}. 
Let $\mathcal F$ be a flag of proper flats and $I$ an independent set of \(\U_{r,n}\). 
We say that the pair $(I,\mathcal F)$ is \emph{compatible}, and write $I \leq \mathcal F$, if $I$ is contained in every flat of $\mathcal F$. 
Notice that if $\mathcal F=\varnothing$, then $I \leq \mathcal{F}$ for any independent set $I$. 
On the other hand, if $\mathcal F \neq \varnothing$ then $I$ must not be a basis, and 
$I\subseteq \min \mathcal F$, with equality being allowed. 

Let $ N_{\U_{r,n}} = \mathbb{Z}^{n} $ be the integer lattice with basis vectors $ \left\{e_{i} \right\}_{i=1}^{n} $, and write
$ N_{\U_{r,n},\mathbb{R}} = N_{\U_{r,n}} \otimes \mathbb{R} $.
For an element $i \in [n]$ we write $\rho_i = \operatorname{cone}(e_i) \subseteq N_{\U_{r,n}, \mathbb{R}} $, and for a subset $S\subseteq [n]$ we write $\rho_S = \operatorname{cone}(-e_{[n]\setminus S}) \subseteq N_{\U_{r,n}, \mathbb{R}}$, where \(e_A = \sum_{i \in A} e_i\) for \(A\subseteq [n]\).
The next definition specializes \cite[Definition~2.4]{semismall} to the case of uniform matroids.
\begin{definition}
    For \(n > 0\), the \emph{augmented Bergman fan} \(\Sigma_{\U_{r,n}}\) of \(\U_{r,n}\) is a unimodular fan in \(N_{\U_{r,n}, \mathbb{R}}\) with rays 
    \[
    \{\rho_i \mid i\in [n]\} \cup \{\rho_F \mid |F| \leq r-1, F\subseteq [n]\}
    \]
    and cones of the form
    \[
    \sigma_{I\leq \mathcal F} = \operatorname{cone}(e_i)_{i \in I} + \operatorname{cone}(-e_{[n]\setminus F})_{F \in \mathcal F},
    \]
    where \(\mathcal F\) is a flag of proper flats of \(\mathcal L(\U_{r,n})\) and \(I\) is an independent set of \(\U_{r,n}\) compatible with \(\mathcal F\). To ease notation, we write \(\sigma_I\) for \(\sigma_{I \leq \varnothing}\). The \emph{Bergman fan} \(\underline{\Sigma}_{\U_{r,n}}\) of \(\U_{r,n}\) coincides with \(\operatorname{St}(\rho_{\varnothing})\), the star of \(\rho_{\varnothing}\) in \(\Sigma_{\U_{r,n}}\);
    see \cite[Definition (3.2.8)]{CLS} for a definition of star. 
    If \(n=0\), the augmented and non-augmented Bergman 
    fans coincide and consist of a single point.
\end{definition}

Later in \cref{sec:augmented-singular-cohomology}, we will also focus on the following subfan of \(\Sigma_{\U_{r,n}}\).
\begin{definition}
    The \emph{independence fan} \(\IN_{\U_{r,n}}\) is the unimodular subfan of \(\Sigma_{\U_{r,n}}\) consisting of all the cones of the form \(\sigma_{I}\). 
\end{definition}

\begin{example}\label{ex:U24-fan}
    Consider \(\U_{2,4}\), the uniform matroid of rank \(2\) on the ground set \(\{1,2,3,4\}\). Its augmented Bergman fan \(\Sigma_{\U_{2,4}}\) has \(9\) rays and \(14\) two-dimensional cones. In Figure \ref{fig:face-complex-ex} we draw the face complex of \(\Sigma_{\U_{2,4}}\), i.e., the complex whose vertices are rays and faces are cones. 
    \begin{figure}
    \[
    \begin{tikzpicture}[rotate around x=-30]
    \coordinate (1) at (0,0,0);
    \coordinate (2) at (0,1.5,0);
    \coordinate (3) at (0,-1,3);
    \coordinate (4) at (0,-1,-3);
    \coordinate (11) at (3,0,0);
    \coordinate (22) at (3,1.5,0);
    \coordinate (33) at (3,-1,3);
    \coordinate (44) at (3,-1,-3);
    \coordinate (e) at (6,0,0);
    \filldraw[gray]
    (1) circle[radius=2pt]
    node[font=\scriptsize, anchor=north, text=black]
    {$1 \leq \varnothing$};
    
    \filldraw[gray]
    (2) circle[radius=2pt]
    node[font=\scriptsize, anchor=south, text=black]
    {$2 \leq \varnothing$};
    
    \filldraw[gray]
    (3) circle[radius=2pt]
    node[font=\scriptsize, anchor=north, text=black]
    {$3 \leq \varnothing$};
    
    \filldraw[gray]
    (4) circle[radius=2pt]
    node[font=\scriptsize, anchor=north, text=black]
    {$4 \leq \varnothing$};
    
    \filldraw[gray]
    (11) circle[radius=2pt]
    node[font=\scriptsize, anchor=south, text=black]
    {$\varnothing \leq \{1\}$};
    
    \filldraw[gray]
    (22) circle[radius=2pt]
    node[font=\scriptsize, anchor=south, text=black]
    {$\varnothing \leq \{2\}$};
    
    \filldraw[gray]
    (33) circle[radius=2pt]
    node[font=\scriptsize, anchor=north, text=black]
    {$\varnothing \leq \{3\}$};
    
    \filldraw[gray]
    (44) circle[radius=2pt]
    node[font=\scriptsize, anchor=north, text=black]
    {$\varnothing \leq \{4\}$};
    
    \filldraw[gray]
    (e) circle[radius=2pt]
    node[font=\scriptsize, anchor=west, text=black]
    {$\varnothing \leq \{\varnothing\}$};
    \draw[very thick,dotted] 
          (1)--(2) 
          (1)--(3) 
          (1)--(4) 
          (2)--(3) 
          (2)--(4) 
          (3)--(4);
    \draw 
          (1)--(11) 
          (2)--(22) 
          (3)--(33) 
          (4)--(44);
    \draw[very thick,dashdotted] (11)--(e) 
          (22)--(e) 
          (33)--(e) 
          (44)--(e);
        \end{tikzpicture}
    \]
    \caption{The one-dimensional face complex of \(\Sigma_{\U_{2,4}}\).  The vertices correspond to the rays. The dash-dotted subcomplex corresponds to \(\underline{\Sigma}_{\U_{2,4}}\). The dotted subcomplex is the face complex of \(\IN_{\U_{2,4}}\). 
    }\label{fig:face-complex-ex}
    \end{figure}
    
\end{example}

\subsection{The singular cohomology ring of a smooth toric variety}
    Given an integer lattice $ N $, any unimodular fan $ \Sigma \subseteq N_{\mathbb{R}} $ defines a smooth toric variety
    $ X_{\Sigma} $ defined over $ \mathbb{C} $. We recall here a presentation of the singular cohomology ring of 
    $ X_{\Sigma} $ in terms of the Tor algebra and Koszul homology of the Stanley--Reisner ring. We work with rational 
    coefficients throughout.

    \begin{definition}
        The \emph{Stanley--Reisner ring} of $ \Sigma $ is 
        \[ 
            \mathbb{Q}[\Sigma] = \frac{\mathbb{Q}[x_{\rho} : \textrm{$\rho \in \Sigma $ a ray}]}
            {(x_{\rho_{1}} \cdots x_{\rho_{s}} : \operatorname{cone}(\rho_{1}, \dots, \rho_{s}) \notin \Sigma).}
        \]
    \end{definition}    

    \begin{remark}
        For the augmented Bergman fan \(\Sigma_{\U_{r,n}}\) we distinguish the rays coming from elements of the ground set \(\rho_i\) from the rays associated to the rank-one flats \(\rho_{\{i\}}\) by using two different sets of variables as in \cite{semismall}. In particular, the Stanley--Reisner ring will be a quotient of 
        \[
        \mathbb Q [x_F, y_i : \text{$F$ proper flat, $i\in [n]$}].
        \]
    \end{remark}
    
    Writing $ N^{\vee} $ for the dual of $N$ and $ N^{\vee}_{\mathbb{Q}} $ for its rationalization, $ \mathbb{Q}[\Sigma] $
    is naturally a finitely-generated graded algebra over $ \Sym(N_{\mathbb{Q}}^{\vee}) $. 
    The structure map is defined by
    \[ 
            m \longmapsto \sum_{\rho \in \Sigma \textrm{ a ray}} \langle m, u_{\rho} \rangle \cdot x_{\rho}
    \]
    where $m \in N^{\vee} $ is a character and $u_{\rho} \in N $ is the primitive integral generator of $ \rho $.

    \begin{remark}
        For the integer lattice $N_{\U_{r,n}} $ and dual $ N^{\vee}_{\U_{r,n}} $, we will write
        $ \ell_{i} \in N^{\vee}_{\U_{r,n}} $ for the Kronecker dual of $ e_{i} $.
    \end{remark}

    \begin{theorem}[{\cite[Theorem 1.2]{franzRing}}]
       Let $ X_{\Sigma} $ be a smooth toric variety. As rings, 
       \begin{equation}\label{eq:torCohomology}
            H^{\bullet}(X_{\Sigma}) \cong \Tor_{\bullet}^{\Sym(N^{\vee}_{\mathbb{Q}})}(\mathbb{Q}[\Sigma], \mathbb{Q}), 
       \end{equation}
       where $ \mathbb{Q} $ is the residue field of the maximal homogeneous ideal.
    \end{theorem}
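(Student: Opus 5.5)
The plan is the standard route through equivariant cohomology and the Eilenberg--Moore spectral sequence, with the multiplicative structure handled via formality. Let $T=N\otimes\mathbb{C}^{\times}$ be the torus acting on $X_{\Sigma}$. Then $H^{\bullet}(BT)\cong\Sym(N^{\vee}_{\mathbb{Q}})$, with characters placed in cohomological degree $2$. The first step is to identify the equivariant cohomology. For $\Sigma$ unimodular, $X_{\Sigma}$ is $T$-equivariantly homotopy equivalent to the moment-angle-type space built from the fan. Its Borel construction $ET\times_{T}X_{\Sigma}$ is therefore homotopy equivalent to the Davis--Januszkiewicz space $DJ(\Sigma)=\bigcup_{\sigma\in\Sigma}\prod_{\rho\in\sigma}\mathbb{CP}^{\infty}$. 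Its cohomology is $\mathbb{Q}[\Sigma]$, with the $x_{\rho}$ in degree $2$. Under this identification, the $H^{\bullet}(BT)$-module structure induced by $ET\times_{T}X_{\Sigma}\to BT$ is exactly the structure map $m\mapsto\sum_{\rho}\langle m,u_{\rho}\rangle x_{\rho}$; checking this is a computation on each chart $\mathbb{C}^{\sigma}\times(\mathbb{C}^{\times})^{n-\dim\sigma}$.

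The second step applies the Eilenberg--Moore machinery to the pullback square formed by $X_{\Sigma}\to ET\times_{T}X_{\Sigma}\to BT$ and the point $\mathrm{pt}\to BT$. Since $BT$ is simply connected, this gives an isomorphism of graded vector spaces
\[
H^{\bullet}(X_{\Sigma})\cong \Tor^{C^{*}(BT)}_{\bullet}\bigl(C^{*}(DJ(\Sigma)),\mathbb{Q}\bigr).
\]
This is an isomorphism at the level of differential graded algebras: the right-hand side is a derived tensor product of cochain algebras and computes the cochains of the fiber. To pass to $\Tor^{\Sym(N^{\vee}_{\mathbb{Q}})}(\mathbb{Q}[\Sigma],\mathbb{Q})$, we use two formality statements over $\mathbb{Q}$. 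First, $BT$ is formal. Second, $DJ(\Sigma)$ is formal, via the Notbohm--Ray or Franz argument gluing the formal pieces $\prod\mathbb{CP}^{\infty}$ along the face poset with compatible quasi-isomorphisms. We also need the map $C^{*}(BT)\to C^{*}(DJ(\Sigma))$ to be formal as a map, i.e.\ compatible with these quasi-isomorphisms. This is arranged by choosing cocycle representatives of the generators $x_{\rho}$ coherently, for instance through Franz's construction with strictly commutative models or piecewise polynomial forms.

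The final step is to compute $\Tor$ with the Koszul resolution of $\mathbb{Q}$ over $\Sym(N^{\vee}_{\mathbb{Q}})$, which yields the Koszul complex $K(\mathbb{Q}[\Sigma])=\mathbb{Q}[\Sigma]\otimes\Lambda N^{\vee}_{\mathbb{Q}}$. Its homology carries the algebra structure induced by the commutative dg algebra structure of $K$. With formal models in hand, the derived tensor product over strictly commutative dg algebras is computed by this Koszul dg algebra. Its homology is therefore $\Tor_{\bullet}(\mathbb{Q}[\Sigma],\mathbb{Q})$ as a ring, not merely up to associated graded.

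The main obstacle is precisely this multiplicativity. The Eilenberg--Moore spectral sequence collapses for degree reasons, but a collapse only gives $H^{\bullet}(X_{\Sigma})$ up to the associated graded of a filtration, so a multiplicative extension problem remains. I would attack it as Franz does: build a strictly commutative cochain model for $DJ(\Sigma)$ together with a quasi-isomorphism to $\mathbb{Q}[\Sigma]$ that is $H^{\bullet}(BT)$-linear on the nose. With such a model, the twisted tensor product computing $C^{*}(X_{\Sigma})$ is quasi-isomorphic as a dg algebra to $K(\mathbb{Q}[\Sigma])$, and no extension problem arises.
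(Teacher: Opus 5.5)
Your argument is correct for rational coefficients, but it is a different route from the one behind the cited result. The paper gives no proof here and simply quotes Franz. His theorem is proved for coefficient rings more general than $\mathbb{Q}$ (e.g.\ rings in which $2$ is invertible), where strictly commutative cochain models do not exist. He therefore works with singular cochains and their homotopy-commutative (homotopy Gerstenhaber) structure, and that is where the naive product on $\Tor$ can actually fail without extra hypotheses.

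Your proof is the rational-homotopy version of this. If you work with $A_{\mathrm{PL}}$ from the start, the Eilenberg--Moore step becomes the statement that the fibre of $ET\times_T X_\Sigma\to BT$ (simply connected base, connected fibre) is modelled by the derived tensor product of CDGAs. With plain singular cochains $C^{*}$ that derived tensor product is not itself a dga, so this is the right setting. Formality of $DJ(\Sigma)$ and the Koszul resolution then finish the job. Your route is much lighter and gives exactly the rational statement the paper uses. Franz's route carries integral and positive-characteristic information, at the cost of heavy sh-machinery.

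Two smaller remarks. First, over $\mathbb{Q}$ the ``formality of the map'' you worry about is automatic, so no coherent choice of representatives is needed:
\begin{enumerate}
\item $S=\Sym(N^{\vee}_{\mathbb{Q}})$ is a free CDGA with zero differential, so the composite $S\to A_{\mathrm{PL}}(BT)\to A_{\mathrm{PL}}(DJ(\Sigma))$ lifts strictly to a surjective Sullivan model $M\twoheadrightarrow A_{\mathrm{PL}}(DJ(\Sigma))$.
\item Any CDGA map from $S$ into $(\mathbb{Q}[\Sigma],0)$ equals its effect on cohomology.
\item After normalizing the formality map $M\to\mathbb{Q}[\Sigma]$ by an automorphism of $\mathbb{Q}[\Sigma]$, the composite $S\to\mathbb{Q}[\Sigma]$ is therefore exactly $m\mapsto\sum_\rho\langle m,u_\rho\rangle x_\rho$, by your chart computation.
\end{enumerate}

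Second, the Eilenberg--Moore spectral sequence does not collapse for degree reasons alone. Parity only kills $d_r$ for even $r$. Its collapse here is a consequence of the same formality argument, and your proof does not need it separately.
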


    The singular cohomology groups have an increasing \emph{weight filtration} 
    \[ 
        0 = W_{p-1}H^{p}(X_{\Sigma}) \leq W_{p}H^{p}(X_{\Sigma}) \leq \cdots \leq W_{2p} H^{p}(X_{\Sigma}) 
        = H^{p}(X_{\Sigma})
    \]
    from mixed Hodge theory. Under the isomorphism \eqref{eq:torCohomology}, the associated gradeds of the weight filtration
    correspond to the internal grading of the Tor algebra induced by the grading of $ \mathbb{Q}[\Sigma] $.

    \begin{theorem}[{\cite[Theorem~5.4]{weberWeights}}]
        Let $ X_{\Sigma} $ be a smooth toric variety. Then
        \[ 
            \Tor_{i}^{\Sym(N_{\mathbb{Q}}^{\vee})}(\mathbb{Q}[\Sigma], \mathbb{Q})_{j} \cong
            \Gr_{2j}^{W}H^{2j-i}(X_{\Sigma}) = W_{2j}H^{2j-i}(X_{\Sigma})/W_{2j-1}H^{2j-i}(X_{\Sigma}).
        \]
    \end{theorem}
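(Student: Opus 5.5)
The plan is to realize the Tor algebra of \eqref{eq:torCohomology} as the $E_2$-page of an Eilenberg--Moore spectral sequence that is compatible with mixed Hodge structures. Weight considerations should then force the sequence to degenerate and identify the internal grading with the weight grading.

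\emph{Step 1: the Borel construction.} Let $T$ be the torus of $X_\Sigma$ and consider the fibration $X_\Sigma \to X_{\Sigma,T} := ET\times_T X_\Sigma \to BT$. By the standard description of equivariant cohomology of smooth toric varieties, $H^\bullet_T(X_\Sigma)\cong \mathbb{Q}[\Sigma]$, with $x_\rho$ the equivariant class of the divisor $D_\rho$, placed in cohomological degree $2$. Also $H^\bullet(BT)\cong \Sym(N^\vee_{\mathbb{Q}})$, again in degree $2$, and the module structure is the map $m\mapsto \sum_\rho\langle m,u_\rho\rangle x_\rho$ given above. The Eilenberg--Moore spectral sequence of the pullback square, with $X_\Sigma = X_{\Sigma,T}\times_{BT} ET$, has
\[E_2^{-i,\ast}=\Tor_i^{\Sym(N^\vee_{\mathbb{Q}})}(\mathbb{Q}[\Sigma],\mathbb{Q}),\]
and it converges to $H^\bullet(X_\Sigma)$. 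An element of $E_2$ coming from Koszul degree $i$ and internal degree $j$, that is, from $\mathbb{Q}[\Sigma]_{j-i}\otimes\Lambda^i N^\vee_{\mathbb{Q}}$, sits in total degree $2(j-i)+i=2j-i$.

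\emph{Step 2: weights.} Next I would model $BT$, $ET$ and $X_{\Sigma,T}$ by simplicial (or ind-) algebraic varieties, using $(\mathbb{C}^N\setminus 0)^{\dim T}$ approximations. This endows every term with a mixed Hodge structure, and by functoriality of Deligne's theory for cosimplicial/bar constructions it makes the Eilenberg--Moore spectral sequence a spectral sequence of mixed Hodge structures. Then $H^{2k}(BT)$ and $H^{2k}_T(X_\Sigma)$ are pure of Tate type $(k,k)$, hence of weight $2k$. Purity of $H^\bullet_T(X_\Sigma)$ follows because it is generated by the algebraic cycle classes $x_\rho$. Consequently each $\Tor_i(\mathbb{Q}[\Sigma],\mathbb{Q})_j$ is pure of weight $2\cdot j$: the Koszul generators $\ell\in\Lambda^1$ have weight $2$, being the suspension of a weight-$2$ class, which is consistent with $H^1(T)$ having weight $2$.

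\emph{Step 3: degeneration and splitting.} Each differential $d_s$ maps $E_s^{-i,\ast}$ to $E_s^{-i+s,\ast}$ while preserving total degree plus one and preserving weight. On $E_2$ the pair (total degree $2j-i$, weight $2j$) determines $(i,j)$, so source and target of any $d_s$ with $s\ge 2$ carry different weights and all differentials vanish. For the same reason the Eilenberg--Moore filtration on $H^{p}(X_\Sigma)$ coincides with the weight filtration, up to reindexing. By strictness of morphisms of mixed Hodge structures, this gives $\Gr^W_{2j}H^{2j-i}(X_\Sigma)\cong E_\infty=E_2$ in bidegree $(i,j)$, which is the claim. Odd weights never occur.

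\emph{Main obstacle.} The hard part is Step 2: rigorously equipping the Eilenberg--Moore spectral sequence with a mixed Hodge structure compatible with the differentials. To handle it I would first try to use the Koszul complex $\mathbb{Q}[\Sigma]\otimes\Lambda N^\vee_{\mathbb{Q}}$ directly as a mixed Hodge complex model of $X_\Sigma$ (Franz's formality model). Concretely, I would compare it with Deligne's logarithmic de Rham complex for a smooth toric compactification $X_\Sigma\subset X_{\Sigma'}$, in which the forms $d\log\chi^m$ have weight $2$ and type $(1,1)$. This would bypass the simplicial machinery.
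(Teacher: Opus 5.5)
Your outline is correct. The weight bookkeeping on the cobar page checks out: the suspended class $[\ell]$ keeps weight $2$, so $E_2^{-i,2j}$ is pure of weight $2j$ in total degree $2j-i$. The vanishing of $d_s$ for $s\ge 2$ and the identification $F^{-i}H^p=W_{p+i}H^p$ via strictness also check out.

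The paper gives no proof of this statement and simply quotes Weber, so the comparison is with that source. The real difference is where Hodge theory enters.

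In your route everything rests on Step~2, which is heavier than ``Deligne's theory'' suggests. Deligne treats simplicial varieties, whereas a cobar construction needs Hain's mixed Hodge complexes on bar constructions, built from multiplicative models. You also need a check that the resulting MHS on the abutment is Deligne's MHS on $X_\Sigma$; for this, use $ET_N=(\mathbb{C}^N\setminus 0)^{\dim T}$ and the isomorphism $ET_N\times_{BT_N}(ET_N\times_T X_\Sigma)\cong ET_N\times X_\Sigma$. This is essentially what Franz and Weber later carried out, using mixed sheaves, in their paper on weights and the Eilenberg--Moore spectral sequence.

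The Frobenius-type argument behind Weber's paper instead replaces weights by eigenvalues of a natural endomorphism. For toric varieties one can take $\varphi_q\colon X_\Sigma\to X_\Sigma$ induced by multiplication by $q\ge 2$ on $N$. It acts on the whole pullback square, hence on your spectral sequence, by $q^j$ in internal degree $j$, since $\varphi_q^*x_\rho=qx_\rho$ and $\varphi_q^*\ell=q\ell$. This kills $d_s$ for $s\ge 2$ and splits $H^{2j-i}(X_\Sigma)$ into eigenspaces, with no Hodge structure on the spectral sequence at all. Hodge theory is then needed only to see that $\varphi_q^*$ acts on $\Gr^W_{2w}H^\bullet(X_\Sigma)$ by $q^w$. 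That follows from Deligne's simplicial theory applied to the \v{C}ech spectral sequence of the affine toric cover, whose intersections are $\mathbb{C}^k\times(\mathbb{C}^*)^{m}$.

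Your approach buys generality: it works for any fibre square whose terms have pure cohomology, and it needs no special endomorphism. The eigenvalue approach is elementary but specific to toric varieties, and it additionally gives a canonical splitting of $W$.
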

    \noindent We note from this description that $ \Gr_{q}^{W}H^{p}(X_{\Sigma}) $ is trivial whenever $q$ is odd.
    
    A hands-on presentation of these Tor algebras comes from the Koszul resolution of $ \mathbb{Q} $ and Koszul homology.

    \begin{definition}
        The \emph{Koszul complex} is the bigraded chain complex
        \[ 
            K_{i}(\mathbb{Q}[\Sigma])_{j} = \mathbb{Q}[\Sigma]_{j-i} \otimes \bigwedge^{i} N^{\vee}_{\mathbb{Q}}
        \]
        with differential induced by
        \[ 
            d(x) = 0 \textrm{ for $ x \in \mathbb{Q}[\Sigma]$} \;\;\; \textrm{and} \;\;\; d(m) =  \sum_{\rho \in \Sigma \textrm{ a ray}} \langle m, u_{\rho} \rangle \cdot x_{\rho} \textrm{ for $ m \in N^{\vee} $}.
        \]
    \end{definition}    
    This complex has the structure of a graded commutative ring, with multiplication induced by
    \[ x \otimes \xi  \cdot y \otimes \zeta = xy \otimes \xi \wedge \zeta\]
    for $ x,y \in \mathbb{Q}[\Sigma] $ and $ \xi, \zeta \in \bigwedge^{\bullet} N^{\vee}_{\mathbb{Q}}. $
    The Koszul homology $ H_{\bullet}(K(\mathbb{Q}[\Sigma]))_{\bullet} $ is isomorphic to 
    $ \Tor_{\bullet}^{\Sym(N^{\vee}_{\mathbb{Q}})}(\mathbb{Q}[\Sigma], \mathbb{Q})_{\bullet} $,
    and thus $ H^{\bullet}(X_{\Sigma}) $, as a ring.

    A convenient way to organize the associated gradeds of the weight filtration is in terms of a
    \emph{Betti diagram}:
    \begin{center}
            \begin{tabular}{c|cccc}
                 & $0$ & $ 1$ & $ 2 $ & $ \cdots $  \\ \hline
                 $0$ & $ \beta_{0,0} $ & $ \beta_{1,1} $ & $ \beta_{2,2} $ & $ \cdots $ \\
                 $1$ & $ \beta_{0,1} $ & $ \beta_{1,2} $ & $ \beta_{2,3} $ & $ \cdots $ \\
                 $2$ & $ \beta_{0,2} $ & $ \beta_{1,3} $ & $ \beta_{2,4} $ & $ \cdots $ \\
                 $ \vdots $ & $ \vdots $ & $ \vdots $ & $ \vdots $ & $ \ddots $ 
            \end{tabular}
    \end{center}
    where $ \beta_{i,j} = \dim_{\mathbb{Q}} H_{i}(K(\mathbb{Q}[\Sigma]))_{j} =
    \dim_{\mathbb{Q}} \Gr_{2j}^{W} H^{2j-i}(X_{\Sigma}) $. Here we have placed the \emph{Hodge numbers} into the 
    Betti diagram, but other times we will place basis elements or the associated gradeds themselves into such a diagram.

    \subsection{Definition of (weakly) retral flags and admissible wedges}
    In this section we provide a brief overview of the \emph{retral} and 
    \emph{weakly retral} flags of flats in $ \mathcal{L}(\U_{r,n}) $ used in
    \cite{binderSingularCohomologyUniform} to construct the retral basis for the singular cohomology of 
    a uniform matroid in terms of Koszul homology. Throughout, we identify a singleton set with its unique element whenever no confusion can arise.
    Given a flag of proper flats $ \mathcal{F} $ in
    $ \mathcal{L}(\U_{r,n}) $, we define the \emph{ranks} of $ \mathcal{F} $ to be
    \[ 
        \rks(\mathcal{F}) = \left\{ \rk(F) : F \in \mathcal{F} \right\}.  
    \]
    We will follow the convention in \cite{binderSingularCohomologyUniform} that all flags of flats contain
    $ \varnothing $, or equivalently, $ 0 \in \rks(\mathcal{F}) $. For flats $ F, G \in \mathcal{L}(\M) $,
    we say that $G$ \emph{covers} $ F $ if $ F < G $ and there is no flat \(H\) such that \(F<H<G\). If \(\M\) is a uniform matroid and \(G\) is a proper flat, \(G\) covers \(F\) if and only if $ \left| G \setminus F \right| = 1 $.

    \begin{definition}
        Let $ \mathcal{F} = \left\{ \varnothing = F_{0} < \cdots < F_{s} \neq [n] \right\} $ be a flag of 
        proper flats in $ \mathcal{L}(\U_{r,n}) $. We say that $ \mathcal{F} $ is \emph{retral} if
        \[\textrm{(Cover condition): For $ 1 \leq i \leq s $,
        whenever $F_{i} $ covers $ F_{i-1} $,  
        $ F_{i} \setminus F_{i-1} =\max (F_{i+1} \setminus F_{i-1}) $.} \] Here we use the convention that 
        $ F_{s+1} = [n] $.
    \end{definition}

   In other words, retrality is a lexicographic maximality condition on the covering relations in a flag of flats.  
   For flags that include a hyperplane (a corank $ 1 $ flat), we define \emph{weakly retral} flags by
   removing this cover condition on the covering relations involving the hyperplane.

   \begin{definition}
       Let $ \mathcal{F} = \left\{ \varnothing = F_{0} < \cdots < F_{s} \neq [n] \right\} $ be a flag of
       proper flats in $ \mathcal{L}(\U_{r,n}) $. We say that $ \mathcal{F} $ is \emph{weakly retral} if
       \begin{enumerate}
           \item $ F_{s} $ is a hyperplane, and 
           \item for $ 1 \leq i < s $,
           whenever $F_{i} $ covers $ F_{i-1} $, $ F_{i} \setminus F_{i-1} =
                 \max (F_{i+1} \setminus F_{i-1}) $.
       \end{enumerate}
   \end{definition}

    For the rational vector space $ N $ with basis $ \left\{ \ell_{i} : i \in [n] \right\} $, let
    $ N_{\mathbb{Q}}^{\vee} $ be the subspace perpendicular to $ \ell_{1} + \cdots + \ell_{n} $ under the 
    canonical inner product. Thus, $ N_{\mathbb{Q}}^{\vee} $ is spanned by the elements $ \ell_{u} - \ell_{v} $
    such that $ u,v \in [n] $. We note that $ N_{\mathbb{Q}}^{\vee} $ is the rationalization of the 
    character lattice of $ X_{\underline{\Sigma}_{\U_{r,n}}} $.

    To every weakly retral flag $ \mathcal{F} $ in $ \mathcal{L}(\U_{r,n}) $, we associate a set of \emph{admissible wedges}
    in the exterior algebra $ \bigwedge^{>0} N_{\mathbb{Q}}^{\vee} $. 

    \begin{definition}\label{def:admissible}
        Let $ \mathcal{F} = \left\{ \varnothing = F_{0} < \cdots < F_{s-1} < F_{s} \neq [n] \right\} $ be a weakly
        retral flag in $ \mathcal{L}(\U_{r,n}) $. An \emph{admissible wedge} to $ \mathcal{F} $ is a basic wedge
        \[ 
            (\ell_{u_{1}} - \ell_{v_{1}}) \wedge \cdots \wedge (\ell_{u_{i}} - \ell_{v_{i}}) \in \bigwedge^{i > 0}
            N_{\mathbb{Q}}^{\vee} 
        \]
        such that
        \begin{enumerate}
            \item $ \left\{ u_{1} < \cdots < u_{i} \right\} \subseteq [n] \setminus F_{s} $,
            \item each $ v_{k} $ is the successor of $ u_{k} $ in the ordered set $ [n] \setminus F_{s} $, and
            \item if $ F_{s} $ covers $ F_{s-1} $, then $ u_{1} < (F_{s} \setminus F_{s-1}) $.
        \end{enumerate}
    \end{definition}

    These retral and weakly retral flags define the following basis for the singular cohomology ring of a uniform 
    matroid. For notation, let
    \[
        x_{\mathcal{F}} = \prod_{F \in \mathcal{F} \setminus \{\varnothing\}} x_{F} 
    \]
    for a flag of flats $ \mathcal{F} $.

    \begin{theorem}[{\cite[Theorem 3.11]{binderSingularCohomologyUniform}}]\label{thm:basis-uniform}
        The set
        \[ 
            \left\{ x_{\mathcal{F}} : \textrm{$\mathcal{F}$ is retral}\right\}
            \cup \left\{ x_{\mathcal{F}} \otimes \xi : 
            \textrm{$\mathcal{F} $ is weakly retral, $ \xi $ admissible to $ \mathcal{F} $}\right\}
        \]
        is a basis of the singular cohomology ring $H_{\bullet}(K(\mathbb{Q}[\underline{\Sigma}_{\U_{r,n}}])) \cong 
        H^{\bullet}(X_{\underline{\Sigma}_{\U_{r,n}}}) $.
    \end{theorem}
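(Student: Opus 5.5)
The plan is to prove the theorem in three stages.
\begin{enumerate}
\item Show every listed element is a Koszul cycle.
\item Show the listed elements span $H_\bullet(K(\mathbb{Q}[\underline{\Sigma}_{\U_{r,n}}]))$.
\item Show they are linearly independent by matching cardinalities against dimensions computed by induction on $n$.
\end{enumerate}
Throughout I use the presentation of $H^\bullet(X_{\underline{\Sigma}_{\U_{r,n}}})$ as Koszul homology of the Stanley--Reisner ring over $\Sym(N^\vee_{\mathbb{Q}})$, from \cite[Theorem 1.2]{franzRing}.

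\textbf{Cycles.} The elements $x_{\mathcal{F}}$ are cycles because $d$ vanishes on $K_0$. For $x_{\mathcal{F}}\otimes\xi$ with $\mathcal{F}$ weakly retral, the Leibniz rule reduces the claim to showing that $x_{\mathcal{F}}\, d(\ell_u-\ell_v)=0$ whenever $u,v\notin F_s$. Here
\[
d(\ell_u-\ell_v)=\sum_{G\ni u}x_G-\sum_{G\ni v}x_G .
\]
Take a flat $G$ containing $u$ or $v$. The product $x_Gx_{\mathcal{F}}$ is nonzero in $\mathbb{Q}[\underline{\Sigma}]$ only if $G$ is comparable with every member of $\mathcal{F}$.
\begin{itemize}
\item $G\le F_s$ is impossible, since $u,v\notin F_s$.
\item $G>F_s$ is impossible, since $F_s$ is a hyperplane and $G$ is a proper flat.
\end{itemize}
Hence every term vanishes. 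The same observation, together with $u_k\notin F_s$, makes the products of such factors cycles.

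\textbf{Spanning.} I would use a straightening argument, by induction on a lexicographic order on flags. In the Chow ring $H_0$, the linear relations $\sum_{G\ni a}x_G=\sum_{G\ni b}x_G$ let one rewrite a non-retral cover $F_i\lessdot F_{i-1}$. Multiplying the relation by the rest of the flag trades $x_{F_i}$ for terms whose new element is $\max(F_{i+1}\setminus F_{i-1})$, plus terms that are larger in the order. This mirrors the usual reduction to Feichtner--Yuzvinsky-type monomials, and shows the retral monomials span $H_0$.

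For $i>0$, I would first show that any homology class can be represented by a cycle supported on flags ending in a hyperplane. Otherwise, the wedge factor can be absorbed using $d(\ell_u\wedge\cdots)$ relations. Then:
\begin{itemize}
\item the wedge part can be rewritten in the basis $\ell_u-\ell_{\mathrm{succ}(u)}$ of differences on $[n]\setminus F_s$;
\item condition (3) of \cref{def:admissible} is enforced by the same straightening at the cover $F_{s-1}\lessdot F_s$, modulo boundaries.
\end{itemize}

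\textbf{Independence via dimension count.} I would compute the bigraded Betti numbers $\beta_{i,j}$ by induction on $n$. The fan $\underline{\Sigma}_{\U_{r,n}}$ decomposes according to whether the element $n$ lies in the minimal nonempty flat. This gives a Mayer--Vietoris or long exact sequence in Koszul homology (equivalently, in the cohomology of the open toric subvarieties) relating three pieces:
\begin{itemize}
\item $\U_{r,n}$;
\item the deletion-type fan for $\U_{r,n-1}$;
\item stars isomorphic to Bergman fans of contractions $\U_{r-1,n-1}$.
\end{itemize}
On the combinatorial side, retral and weakly retral flags with admissible wedges satisfy the matching recursion, obtained by splitting on whether $n\in F_1$, or whether $n$ is the maximal element forced by the cover condition. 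Base cases are $r=1$ (a point) and $r=n$ (a complete simplicial fan, whose cohomology is that of a smooth complete toric variety).

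Once the counts agree in every bidegree, spanning implies that the listed set is a basis.

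\textbf{Main obstacle.} I expect the hard step to be showing that the connecting maps in the long exact sequence behave as needed, that is, that the sequence splits into short exact sequences in each bidegree. My first attempt would be to exhibit explicit lifts of the proposed basis elements through each map. If that fails, the fallback is to use Cohen--Macaulayness of $\mathbb{Q}[\underline{\Sigma}_{\U_{r,n}}]$ (the Bergman complex is shellable) to reduce the Koszul homology to that of an Artinian reduction over $n-r$ remaining linear forms, and count there directly.
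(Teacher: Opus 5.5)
The paper only quotes this theorem from \cite{binderSingularCohomologyUniform} and does not prove it, so I judge your plan on its own terms.

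Your check that the listed elements are cycles is correct. Straightening monomial by monomial is also legitimate in $H_0$, because every chain there is a cycle. The gap is homological degree $i\geq1$, which is where the content of the theorem lies. There the individual terms $x_{\mathcal F}\otimes\xi$ of a cycle are in general not cycles, so you cannot rewrite them one at a time; you need control of cycles modulo boundaries. Your claim that every class in $H_i$, $i\geq 1$, is represented by a cycle supported on flags ending in a hyperplane is not a harmless preliminary. Consider the truncation sequence
\[
0\to\bigoplus_{H\in\mathcal H(\U_{r,n})}\mathbb{Q}[\overline{\St}(\rho_H)]\xrightarrow{\oplus\cdot x_H}\mathbb{Q}[\underline{\Sigma}_{\U_{r,n}}]\to\mathbb{Q}[\underline{\Sigma}_{\U_{r-1,n}}]\to 0 .
\]
Such cycles are exactly images of cycles of the left-hand term. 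So your claim is equivalent to the vanishing of the restriction $H_i(K(\mathbb{Q}[\underline{\Sigma}_{\U_{r,n}}]))\to H_i(K(\mathbb{Q}[\underline{\Sigma}_{\U_{r-1,n}}]))$, that is, to injectivity of the connecting map $\delta$ on $H_i(K(\mathbb{Q}[\underline{\Sigma}_{\U_{r-1,n}}]))$. The paper deduces its analogues (Lemmas~\ref{lem:augconnectingHom} and~\ref{lem:connectingHom}) \emph{from} an explicit basis, and ``absorbing the wedge factor'' does not prove it. Likewise, condition~(3) of Definition~\ref{def:admissible} is exactly the choice of a complement to $\operatorname{im}\delta$. ``Straightening modulo boundaries'' is therefore the whole computation, not a routine step.

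Stage 3 does not close either. For $r\geq3$, the cones whose minimal flat avoids $n$ do not form a subfan: the flag $\{1\}<\{1,n\}$ has the face $\{1,n\}$. The sequence you actually get removes the ray $\rho_{\{n\}}$. Its star is indeed $\underline{\Sigma}_{\U_{r-1,n-1}}$, but the quotient fan still lives in the $(n-1)$-dimensional space and still contains the rays $\rho_F$ with $n\in F$. So it is not $\underline{\Sigma}_{\U_{r,n-1}}$, its homology is unknown, and the connecting maps are, as you say, left open.

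The Cohen--Macaulay fallback does not supply the numbers. After an l.s.o.p.\ of $r-1$ forms, the Koszul homology with respect to the remaining $n-r$ forms depends on how they act on the Artinian reduction. Its Hilbert function fixes only $\sum_i(-1)^i\beta_{i,j}$. Upper bounds from spanning plus equal Euler characteristics do not force termwise equality.

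The missing idea is to induct on $r$ through the sequence above, the non-augmented analogue of \eqref{eq:AugTruncation} and \eqref{eq:modAugTruncation}. Since $X_{\St(\rho_H)}\cong X_{\underline{\Sigma}_{\U_{r-1,r-1}}}\times(\mathbb{C}^*)^{n-r}$, one gets $H_\bullet(K(\mathbb{Q}[\overline{\St}(\rho_H)]))\cong\underline{A}^\bullet(\U_{r-1,r-1})\otimes\bigwedge^\bullet V_H$, where $V_H$ is spanned by the $\ell_u-\ell_v$ with $u,v\notin H$. This is literally the weakly retral flags ending at $H$, with wedges obeying conditions (1)--(2).

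Now apply $\delta$ to an element $x_{\mathcal F}\otimes(\ell_u-\ell_v)\wedge\eta$ of the inductive basis for $\U_{r-1,n}$, where $F$ is its top flat. The only new rays contributing to the differential are $\rho_{F\cup\{u\}}$ and $\rho_{F\cup\{v\}}$, so $\delta$ has leading terms $x_{\mathcal F}(x_{F\cup u}-x_{F\cup v})\otimes\eta$. It remains to show that $\delta$ is injective in positive degrees, e.g.\ by triangularity, and to choose a complement to its image. That complement is what the top-cover retrality condition and condition~(3) encode. Pushing forward and pulling back then gives spanning and independence at once, as in Lemma~\ref{lem:basisIndependenceFan} and Theorems~\ref{thm:main-augmented-basis} and~\ref{thm:modAugBasis}, with no separate dimension count.
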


\section{Augmented singular cohomology of uniform matroids}\label{sec:augmented-singular-cohomology}
We now consider the \emph{augmented singular cohomology ring} \(H^\bullet(X_{\Sigma_{\U_{r,n}}})\)
of a uniform matroid, i.e., the singular cohomology of the toric variety associated to the augmented Bergman fan. We will use again the isomorphism with the Koszul homology of the Stanley--Reisner ring, together with the weakly retral flags from Theorem~\ref{thm:basis-uniform}, to compute a basis for it. 

\subsection{The augmented retral basis}
In this section we prove Theorem \ref{thm:main-augmented-basis}.
We do so by constructing the augmented Bergman fan step-by-step,
starting with the independence fan $ \IN_{\U_{r,n}} $ and then adding in the rays $ \rho_{F} $ in order of decreasing rank. At each step we construct a basis for the singular cohomology.
We begin with a lemma for the base case of this construction.
\begin{lemma}\label{lem:basisIndependenceFan}
    The set \( \{1\} \sqcup \mathcal{B}^{\IN}_{\U_{r,n}} \) is a basis for 
    $ H_{\bullet}(K(\mathbb{Q}[\IN_{\U_{r,n}}])) \cong H^{\bullet}(X_{\IN_{\U_{r,n}}}) $.
\end{lemma}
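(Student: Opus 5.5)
The plan is to identify the Koszul complex of $\mathbb{Q}[\IN_{\U_{r,n}}]$ with the Koszul complex of a squarefree monomial ideal, and then compute its homology multidegree by multidegree, in the spirit of Hochster's formula. The rays of $\IN_{\U_{r,n}}$ are exactly $\rho_i=\operatorname{cone}(e_i)$, and the cones are the $\sigma_I$ with $I$ independent, i.e.\ $|I|\le r$. Hence
\[
\mathbb{Q}[\IN_{\U_{r,n}}]=\mathbb{Q}[y_1,\dots,y_n]/\bigl(y_{i_1}\cdots y_{i_{r+1}} : i_1<\cdots<i_{r+1}\bigr)
\]
is the Stanley--Reisner ring of the $(r-1)$-skeleton of the simplex on $[n]$. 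Moreover, the structure map sends $\ell_i\mapsto y_i$, so $\Sym(N^\vee_{\mathbb{Q}})\to\mathbb{Q}[\IN_{\U_{r,n}}]$ is the canonical surjection from $S=\mathbb{Q}[y_1,\dots,y_n]$. The Koszul complex then carries a fine $\mathbb{Z}^n$-grading, with $\deg y_i=\deg\ell_i=e_i$, and the differential $d(\ell_j)=y_j$ preserves it. I will therefore decompose $K(\mathbb{Q}[\IN_{\U_{r,n}}])$ into multigraded pieces and compute the homology of each.

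\emph{Step 1: the elements are cycles.} For $B$ a basis and $J\cap B=\varnothing$, we have
\[
d(y_B\otimes\ell_J)=\sum_{j\in J}\pm\, y_By_j\otimes\ell_{J\setminus j}.
\]
Each $y_By_j$ is a squarefree monomial of degree $r+1$, hence zero in the ring. So every element of $\mathcal{B}^{\IN}_{\U_{r,n}}$ is a cycle, of multidegree $e_{B\cup J}$. The element $1$ spans $H_0=\mathbb{Q}[\IN]/(y_1,\dots,y_n)=\mathbb{Q}$, since the ring part of $H_0$ is the ring modulo the image of $N^\vee$.

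\emph{Step 2: restrict to squarefree multidegrees.} In a multidegree $a\in\mathbb{Z}_{\ge0}^n$ that is not squarefree, pick $i$ with $a_i\ge2$. Then multiplication by $y_i$ composed with contraction against $\ell_i$ should give a contracting homotopy; this is the standard argument behind Hochster's formula, and I will simply cite it. Consequently only multidegrees $e_S$ with $S\subseteq[n]$ contribute. In multidegree $e_S$, the chain group in homological degree $i$ is spanned by the elements $y_A\otimes\ell_{S\setminus A}$ with $A\subseteq S$, $|A|\le r$ and $|S\setminus A|=i$. The differential removes one element $j$ of $S\setminus A$ and moves it into $A$, killing the term whenever $|A\cup j|=r+1$.

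This piece is therefore the simplicial cochain complex of the full simplex on $S$, reindexed by faces $A$ and truncated above dimension $r-1$; equivalently, it is the cochain complex of the $(r-1)$-skeleton $\Delta_S$. Its homology is $\widetilde H^{\bullet}(\Delta_S)$, which is concentrated in the top degree, corresponding to $|A|=r$. It has dimension $\binom{|S|-1}{r}$ when $|S|>r$, and it vanishes for $S\neq\varnothing$ when $|S|\le r$.

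\emph{Step 3: the main obstacle.} The key step is to show that the cochains on the $r$-subsets $B\subseteq S$ with $\min S\notin B$ form a basis of the top cohomology $\widetilde H^{r-1}(\Delta_S)$. This is exactly the condition $\min J<\min B$, since $J=S\setminus B\ni\min S$.

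I plan to argue via the cone at $v=\min S$. Top cocycles are all $(r-1)$-cochains, so the cohomology is the cokernel of $\delta$ from $(r-2)$-cochains. For each $r$-set $B\ni v$, the coboundary of the cochain on $B\setminus v$ equals $\pm B$ plus terms $\pm(B\setminus v)\cup w$ with $w\neq v$, and all of these have $v\notin$ face. This rewrites every $B\ni v$ modulo boundaries in terms of faces avoiding $v$, so the faces avoiding $v$ span the cokernel. Their number is $\binom{|S|-1}{r}$, which matches the dimension from Step 2, so they are independent.

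Summing over $S$, the homological degree is $i=|S|-r\ge1$, which recovers exactly $\mathcal{B}^{\IN}_{\U_{r,n}}$. As a consistency check, this gives the known Betti numbers $\binom{n}{r+i}\binom{r+i-1}{r}$ of the squarefree Veronese-type ideal. The edge case $r=0$ is immediate from the same argument: there $B=\varnothing$ and the condition on minima is vacuous.
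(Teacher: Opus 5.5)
Your proof is correct, but it takes a different route from the paper.

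\textbf{The paper's route.} It runs a nested induction on $r$ and $n$. It uses the short exact sequence $0\to\mathbb{Q}[\overline{\St}(\rho_n)]\xrightarrow{\cdot y_n}\mathbb{Q}[\IN_{\U_{r,n}}]\to\mathbb{Q}[\IN_{\U_{r,n-1}}]\to 0$, the identification $\St(\rho_n)\cong\IN_{\U_{r-1,n-1}}$, and K\"unneth for the quotient term. It then asserts that the connecting map is nonzero only on $1\otimes\ell_n$. The basis is assembled from push-forwards, which are the elements containing $y_n$, and pull-backs, which are the elements without $y_n$.

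\textbf{Your route.} You use the fact that every ray of $\IN_{\U_{r,n}}$ is a coordinate ray. So $\mathbb{Q}[\IN_{\U_{r,n}}]$ is the Stanley--Reisner ring of the $(r-1)$-skeleton of the simplex, presented as a quotient of $\Sym(N^\vee_{\mathbb{Q}})$ itself. The Koszul complex then carries a fine $\mathbb{Z}^n$-grading. Hochster's formula reduces everything to the top reduced cohomology of skeleta of simplices. The cone-point argument at $\min S$ explains the normalization $\min J<\min B$ conceptually, and the dimension count closes the argument.

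\textbf{What each route buys.} Yours is non-inductive and avoids the connecting-homomorphism bookkeeping that the paper leaves to the reader. It also gives the finer $\mathbb{Z}^n$-grading and the count of Proposition~\ref{prop:formula-B-IN} directly. The paper's route uses exactly the star/K\"unneth/long-exact-sequence machinery of the inductive step in the proof of Theorem~\ref{thm:main-augmented-basis}. Your method would not transfer there: once the rays $\rho_F=\operatorname{cone}(-e_{[n]\setminus F})$ are added, the images of the $\ell_i$ are no longer single variables. Then there is no compatible fine multigrading, and Hochster's formula is unavailable.

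\textbf{A small correction in Step 2.} The map you describe, multiplication by $y_i$ followed by contraction against $\ell_i$, is just the $i$-th component of the Koszul differential. It lowers homological degree, so it cannot be a contracting homotopy. The standard homotopy, valid in any multidegree $a$ with $a_i\ge 2$, sends $m\otimes\xi\mapsto (m/y_i)\otimes \ell_i\wedge\xi$. Here $m/y_i$ is nonzero in the ring because its support equals that of $m$. Since you cite the standard result anyway, this slip does not affect the argument.
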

\begin{proof}
    We make a nested induction, first on $r $ and then on $n$. 
    
    The base case for the first induction
    is $ r = 0 $. Here, $ K(\mathbb{Q}[\IN_{\U_{0,n}}])$ is the complex $ \bigwedge^{\bullet} N_{\U_{0,n}}^{\vee} $
    with trivial differential; a basis of  $ H_{\bullet}(K(\mathbb{Q}[\IN_{\U_{0,n}}])) $ is  
    $ \{1 \} \sqcup \left\{ \ell_{J} : \textrm{$J \neq \varnothing$, $J \subseteq [n] $} \right\} $, as claimed.
    The base case for the second induction is $ \U_{n,n} $. Here, $ X_{\IN_{\U_{n,n}}}  \cong \mathbb{C}^{n} $,
    and the claimed basis is $ \{1\} $.

    For the induction, consider $ \U_{r,n} $ with $ n > r > 0 $. Recall that the \emph{closed star}  of a cone $\sigma \in \Sigma $ is the subfan
\[ \overline{\St}(\sigma) = \left\{ \tau : \operatorname{cone}(\sigma, \tau) \in \Sigma \right\}. \]
    
    We have the short exact sequence of
    $ \Sym(N_{\U_{r,n}}^{\vee}) \cong \Sym(N_{\U_{r,n-1}}^{\vee})[\ell_{n}] $-modules 
    \[ 0 \to \mathbb{Q}[\overline{\St}(\rho_{n})] \xrightarrow{\cdot y_{n}}
        \mathbb{Q}[\IN_{\U_{r,n}}] \to \mathbb{Q}[\IN_{\U_{r,n-1}}] \to 0 \]
    where $ \ell_{n} \mapsto 0 $ in $ \mathbb{Q}[\IN_{\U_{r,n-1}}] $.
    Taking Koszul homology over $ \Sym(N_{\U_{r,n}}^{\vee}) $, we get a long exact sequence
    \[ 
    \resizebox{\textwidth}{!}{
    \(\displaystyle
        \cdots \to H_{\bullet}(K(\mathbb{Q}[\IN_{\U_{r-1,n-1}}]))_{\bullet -1} \to
        H_{\bullet}(K(\mathbb{Q}[\IN_{\U_{r,n}}]))_{\bullet} \to H_{\bullet}(K(\mathbb{Q}[\IN_{\U_{r,n-1}}]))_{\bullet}
        \otimes \bigwedge^{\bullet} \mathbb{Q}\cdot \ell_{n} \to \cdots.
        \)}
    \]
    On the left, we use here the facts that $ \St(\rho_{n}) \cong \IN_{\U_{r-1,n-1}} $ and that
    $H_{\bullet}(K(\mathbb{Q}[\St(\rho_{n})])) \cong H_{\bullet}(K(\mathbb{Q}[\overline{\St}(\rho_{n})])) $
    \cite[Lemma A.3]{binderSingularCohomology}. On the right, we view $H_{\bullet}(K(\mathbb{Q}[\IN_{\U_{r,n-1}}]))$
    as Koszul homology over $ \Sym(N_{\U_{r,n-1}}^{\vee}) $ and then use the K\"{u}nneth formula.

    By the inductive hypothesis, we have bases for the terms on the left and right. From here, it is easy to
    see that the connecting homomorphism vanishes 
    except on the element $ 1 \otimes \ell_{n} $ where the
    connecting homomorphism sends it to $ 1 
    \in H_{0}(K(\mathbb{Q}[\overline{\St}(\rho_{n})])) $. 
    Therefore, we obtain a basis for 
    $ H_{\bullet}(K(\mathbb{Q}[\IN_{\U_{r,n}}]))$ by pushing forward all basis 
    elements for 
    $ H_{\bullet}(K(\mathbb{Q}[\St(\rho_{n})])) $ except $1$  and pulling back all basis elements for 
    $H_{\bullet}(K(\mathbb{Q}[\IN_{\U_{r,n-1}}])) \otimes \bigwedge^{\bullet} \mathbb{Q}\cdot \ell_{n} $ except $ 1 \otimes \ell_{n} $.
    The image under the push-forward is precisely the elements of $ \mathcal{B}^{\IN}_{\U_{r,n}} $ 
    containing $ y_{n} $, and the image under the pull-back is precisely the elements of 
    $\mathcal{B}^{\IN}_{\U_{r,n}} $ which do not contain $ y_{n} $.
\end{proof}
\begin{proof}[Proof of Theorem \ref{thm:main-augmented-basis}]
    For $ 0 \leq i \leq r $, let $ \Sigma^{i}_{\U_{r,n}} $ be the full subfan of $ \Sigma_{\U_{r,n}} $ consisting
    of the cones generated by rays $ \rho_{i} $ for $ i \in [n] $ and $ \rho_{F} $ for proper flats with $ \rk(F) \geq i $.
    In particular, $ \Sigma^{r}_{\U_{r,n}} = \IN_{\U_{r,n}} $ and $ \Sigma^{0}_{\U_{r,n}} = \Sigma_{\U_{r,n}} $.
    By a decreasing induction on $i$, we will prove that 
    \[
        \{ 1 \} \sqcup \bigsqcup_{\textrm{$\rk(F) \geq i$}}
        \mathcal{B}^{F}_{\U_{r,n}} \sqcup 
        \mathcal{B}^{\IN}_{\U_{r,n}} \] is a basis of the singular cohomology ring 
        \(H_\bullet(K(\mathbb Q[\Sigma^{i}_{\U_{r,n}}]))\cong H^\bullet(X_{\Sigma^i_{\U_{r,n}}})\).

    The base case for this induction is Lemma \ref{lem:basisIndependenceFan}. For the inductive step,
    we have the short exact sequence of $ \Sym(N_{\U_{r,n}}^{\vee}) $-modules
    \[ 
        0 \to \bigoplus_{\rk(F) = i}  \mathbb{Q}[\overline{\St}(\rho_{F})] \xrightarrow{\oplus \cdot x_{F}}
        \mathbb{Q}[\Sigma^{i}_{\U_{r,n}}] \to \mathbb{Q}[\Sigma^{i+1}_{\U_{r,n}}] \to 0.
    \]
    As the star of $ \rho_{F} $ in $ \Sigma^{i}_{\U_{r,n}} $ is $ \St(\rho_{F}) \cong \IN_{\U_{i,i}} \times \underline{\Sigma}_{\U_{r,n}/F} $, 
    and $ H_{\bullet}(K(\mathbb{Q}[\IN_{\U_{i,i}}])) = 1 $, we have the long exact sequence in
    Koszul homology
    \[
        \cdots \to \bigoplus_{\rk(F) = i}H_{\bullet}(K(\mathbb{Q}[\underline{\Sigma}_{\U_{r,n}/F}]))
        \to H_{\bullet}(K(\mathbb{Q}[\Sigma^{i}_{\U_{r,n}}])) \to H_{\bullet}(K(\mathbb{Q}[\Sigma^{i+1}_{\U_{r,n}}])) \to 
        \cdots.
    \]
    The induction gives an explicit basis for $ H_{\bullet}(K(\mathbb{Q}[\Sigma^{i+1}_{\U_{r,n}}])) $. It is a straightforward 
    check that the connecting homomorphism vanishes on these basis elements. Therefore, a basis for
    $ H_{\bullet}(K(\mathbb{Q}[\Sigma^{i}_{\U_{r,n}}])) $ is obtained by pushing forward a basis 
    for each $ H_{\bullet}(K(\mathbb{Q}[\underline{\Sigma}_{\U_{r,n}/F}])) $ and pulling back a basis for
    $ H_{\bullet}(K(\mathbb{Q}[\Sigma^{i+1}_{\U_{r,n}}])) $. Using the retral basis for the singular cohomology ring,
    the image of the push-forward is $ \bigsqcup_{\rk(F) =i} \mathcal{B}^{F}_{\U_{r,n}} $, and using the basis given by 
    induction, the image of the pull-back is $ \{1\} \sqcup \bigsqcup_{\rk(F) > i} \mathcal{B}^{F}_{\U_{r,n}} \sqcup
    \mathcal{B}^{\IN}_{\U_{r,n}} $.
\end{proof}

Since \(H_0(K(\mathbb Q[\Sigma_{\U_{r,n}}]))_\bullet\cong A^\bullet(\U_{r,n})\), Theorem \ref{thm:main-augmented-basis} consequently provides an alternative basis for the augmented Chow ring of the uniform matroid.

\begin{corollary}\label{cor:basis-augmented-chow}
    The set
    \[
    \{1\}\sqcup \{x_F \cdot x_{\mathcal F} \colon |F|\leq r-1\text{ and }\mathcal F \text{ is retral in \(\U_{r,n}/F\)} \}
    \]
    forms a basis for \(A^\bullet(\U_{r,n})\).
\end{corollary}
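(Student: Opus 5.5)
The plan is to read off the corollary from Theorem~\ref{thm:main-augmented-basis} by restricting to Koszul degree $0$. The isomorphism $H_\bullet(K(\mathbb Q[\Sigma_{\U_{r,n}}]))\cong H^\bullet(X_{\Sigma_{\U_{r,n}}})$ respects the homological grading. Moreover, the basis in Theorem~\ref{thm:main-augmented-basis} consists of homogeneous elements $x\otimes\xi$ with $\xi\in\bigwedge^{i}N^\vee_{\mathbb Q}$. Hence the basis elements with $\xi$ of exterior degree $0$ form a basis of $H_0(K(\mathbb Q[\Sigma_{\U_{r,n}}]))$. We then transport this basis along the natural isomorphism $H_0(K(\mathbb Q[\Sigma_{\U_{r,n}}]))\cong \mathbb Q[\Sigma_{\U_{r,n}}]/(\text{linear forms})\cong A^\bullet(\U_{r,n})$, under which $x_F$ and $y_i$ go to the usual generators.

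The key step is to sort the pieces of the basis by exterior degree. The element $1$ has degree $0$ and is kept. Every element of $\mathcal B^{\IN}_{\U_{r,n}}$ has the form $\prod_{j\in B}y_j\otimes\ell_J$ with $J\neq\varnothing$, so it has positive exterior degree and drops out. For a proper flat $F$, the elements of $\mathcal B^F_{\U_{r,n}}$ are $x_F\cdot x_{\mathcal F}\otimes\xi$, where $x_{\mathcal F}\otimes\xi$ runs over the retral basis of $H^\bullet(X_{\underline{\Sigma}_{\U_{r,n}/F}})$ from Theorem~\ref{thm:basis-uniform}. In that basis, the elements coming from weakly retral flags carry an admissible wedge, which by Definition~\ref{def:admissible} lies in $\bigwedge^{>0}$. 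The elements of exterior degree $0$ are therefore exactly the $x_{\mathcal F}$ with $\mathcal F$ retral in $\U_{r,n}/F$. This uses the identification of $\U_{r,n}/F$ with $\U_{r-\rk F,\,n-|F|}$, so that the retral notion applies to it. Finally, the proper flats of $\U_{r,n}$ are exactly the subsets with $|F|\le r-1$, which gives the stated indexing set.

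The one point that needs care is that the push-forward along $\cdot x_F$ from the proof of Theorem~\ref{thm:main-augmented-basis} preserves exterior degree. It does, because it is multiplication by a degree-$0$ Koszul class, so it shifts only the internal degree. We must also check that the isomorphism $H_0\cong A^\bullet(\U_{r,n})$ sends the class of $x_F\cdot x_{\mathcal F}$ to the corresponding monomial. Both are immediate, so the argument is essentially bookkeeping and I expect no serious obstacle.
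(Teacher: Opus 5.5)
Your proposal is correct and matches the paper's argument. The paper likewise obtains the corollary by restricting the basis of Theorem~\ref{thm:main-augmented-basis} to Koszul degree $0$ via $H_0(K(\mathbb Q[\Sigma_{\U_{r,n}}]))\cong A^\bullet(\U_{r,n})$, and your sorting by exterior degree (nonempty $J$ in $\mathcal B^{\IN}_{\U_{r,n}}$, admissible wedges in $\bigwedge^{>0}$, leaving only $1$ and the retral elements $x_F\cdot x_{\mathcal F}$) simply writes out the bookkeeping the paper leaves implicit.
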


\begin{example}
    Consider again the uniform matroid \(\U_{2,4}\) from Example \ref{ex:U24-fan}. The basis for \(H_\bullet(K(\mathbb Q [\Sigma_{\U_{2,4}}]))\) from Theorem \ref{thm:main-augmented-basis} is written in Table \ref{tab:aug-basis-U24}. 
    In column \(0\) we find the basis for the augmented Chow ring \(A^\bullet(\U_{2,4}) \cong H_0(K(\mathbb Q[\Sigma_{\U_{2,4}}]))_\bullet\) from Corollary \ref{cor:basis-augmented-chow}.
    We highlight in bold all the basis elements that start with \(x_{\varnothing}\); these correspond, after removing the variable \(x_{\varnothing}\), to the basis elements for \(H_\bullet(K(\mathbb Q [\underline{\Sigma}_{\U_{2,4}}]))\) from Theorem \ref{thm:basis-uniform}.
    \begin{table}[h]
    \[
    \begin{tabular}{c|c|c|c}
     $\U_{2,4} $ & $0$ & $1$ & $2$   \\ \hline
     $0$ &  $1$ & $-$ & $-$  \\
     \hline
     $1$ & \makecell{$\bm{x_{\varnothing}}$ , $ x_{1} $, $ x_{2} $, \\ $ x_{3} $, $ x_{4} $} & 
     \makecell{$x_{1} \otimes(\ell_{2} -\ell_{3}) $, $ x_{1} \otimes (\ell_{3} -\ell_{4}) $, \\
     $ x_{2} \otimes (\ell_{1} - \ell_{3}) $, $ x_{2} \otimes (\ell_{3} - \ell_{4}) $, \\
     $ x_{3} \otimes (\ell_{1} - \ell_{2}) $, $ x_{3} \otimes (\ell_{2} - \ell_{4}) $, \\
     $ x_{4} \otimes (\ell_{1} - \ell_{2}) $, $ x_{4} \otimes (\ell_{2} - \ell_{3}) $} &
     \makecell{$ x_{1} \otimes (\ell_{2} - \ell_{3}) \wedge (\ell_{3} - \ell_{4}) $, \\
     $ x_{2} \otimes (\ell_{1} - \ell_{3}) \wedge (\ell_{3} - \ell_{4}) $, \\
     $ x_{3} \otimes (\ell_{1} - \ell_{2}) \wedge (\ell_{2} - \ell_{4}) $, \\
     $ x_{4} \otimes (\ell_{1} - \ell_{2}) \wedge (\ell_{2} - \ell_{3}) $}\\
     \hline
     $2$ & $\bm{x_{\varnothing} x_{4}} $ & \makecell{
     $ \bm{x_{\varnothing} x_{2} \otimes (\ell_{1} - \ell_{3})} $, \\
     $\bm{x_{\varnothing}x_{3} \otimes (\ell_{1} -\ell_{2})} $, \\
     $ \bm{x_{\varnothing} x_{3} \otimes (\ell_{2} - \ell_{4})} $, \\
     $ \bm{x_{\varnothing} x_{4} \otimes (\ell_{1} - \ell_{2}) }$,  \\
     $ \bm{x_{\varnothing} x_{4} \otimes (\ell_{2} - \ell_{3})} $,\\
     $y_{2} y_{3} \otimes \ell_{1} $, \\
     $ y_{2} y_{4} \otimes \ell_{1} $, \\ $y_{3} y_{4} \otimes \ell_{1} $, \\
     $ y_{3} y_{4} \otimes \ell_{2} $,} &
     \makecell{
     $ \bm{x_{\varnothing} x_{2} \otimes (\ell_{1} -\ell_{3}) \wedge (\ell_{3} - \ell_{4})} $, \\
     $ \bm{x_{\varnothing} x_{3} \otimes (\ell_{1} -\ell_{2}) \wedge (\ell_{2} - \ell_{4})} $, \\
     $ \bm{x_{\varnothing} x_{4} \otimes (\ell_{1} -\ell_{2}) \wedge (\ell_{2} - \ell_{3})} $, \\
     $y_{2} y_{3} \otimes (\ell_{1} \wedge \ell_{4}) $, \\
     $ y_{2} y_{4} \otimes (\ell_{1} \wedge \ell_{3}) $, \\
     $ y_{3} y_{4} \otimes (\ell_{1} \wedge \ell_{2}) $}
\end{tabular}
\]
\caption{The augmented basis from Theorem \ref{thm:main-augmented-basis} for \(\U_{2,4}\) arranged in a Betti diagram. In bold, the basis from Theorem \ref{thm:basis-uniform} for \(F = \varnothing\).}\label{tab:aug-basis-U24}
\end{table}
\end{example}

\section{Modified augmented singular cohomology of uniform matroids}\label{sec:modified-augmented-fan}
We will see in Sections \ref{sec:lefschetz-properties} and \ref{sec:realRooted} that the augmented 
singular cohomology ring does not satisfy the quasi-projective Strong Lefschetz property and that its
refined Hodge--Poincar\'{e} polynomials may not be real-rooted. In this section, we introduce a slight
modification of the augmented singular cohomology ring which recovers these properties.

\subsection{The modified augmented Bergman fan}
We now define a slight modification of the augmented Bergman fan which we use to modify the augmented
singular cohomology ring.
\begin{definition}\label{def:modified-augmented-fan} 
    Let \(1\leq r\leq n\). The \emph{modified augmented Bergman fan} \(\widetilde{\Sigma}_{\U_{r,n}}\) of \(\U_{r,n}\) is the subfan of the stellahedral fan \(\Sigma_{\U_{n,n}}\) whose cones are of the form \(\sigma_{I\leq\mathcal F}\), where every flat appearing in \(\mathcal F\) has cardinality at most \(r-1\). 
\end{definition}
We stress that, in this new definition, we have a cone \(\sigma_I\) for every subset \(I\subseteq [n]\), while in \(\Sigma_{\U_{r,n}}\) we only have such a cone for independent subsets $I$. This implies that the positive orthant \(\sigma_{[n]}\) is a cone of \(\widetilde{\Sigma}_{\U_{r,n}}\). All of its faces form the fan \(\IN_{\U_{n,n}}\). In particular, \(\widetilde{\Sigma}_{\U_{r,n}}\) is not pure when \(r < n\). 

As before, we can define the \emph{modified augmented singular cohomology ring} of a uniform matroid as the singular cohomology ring \(H^\bullet(X_{\widetilde{\Sigma}_{\U_{r,n}}})\) of the toric variety of the modified augmented Bergman fan.

\subsection{The modified augmented retral basis}
A small adaptation of Theorem \ref{thm:main-augmented-basis} provides a basis for the modified augmented singular cohomology ring
of a uniform matroid.
\begin{theorem}\label{thm:modAugBasis}
 The set \[ \{ 1 \} \sqcup \bigsqcup_{\textrm{$F$ proper flat}}
    \mathcal{B}^{F}_{\U_{r,n}} \] is a basis of the modified augmented singular cohomology ring 
    \(H_\bullet(K(
    \mathbb{Q}[\widetilde{\Sigma}_{\U_{r,n}}]))\cong H^\bullet(X_{\widetilde{\Sigma}_{\U_{r,n}}})\).
\end{theorem}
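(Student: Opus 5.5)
We will follow the proof of Theorem~\ref{thm:main-augmented-basis}, with the independence fan $\IN_{\U_{r,n}}$ replaced by the full orthant fan $\IN_{\U_{n,n}}$ as the base of the induction. For $0 \leq i \leq r$, let $\widetilde{\Sigma}^{i}_{\U_{r,n}}$ be the full subfan of $\widetilde{\Sigma}_{\U_{r,n}}$ generated by the rays $\rho_j$ for $j \in [n]$ and the rays $\rho_F$ with $i \leq |F| \leq r-1$. Then $\widetilde{\Sigma}^{r}_{\U_{r,n}} = \IN_{\U_{n,n}}$, whose toric variety is $\mathbb{C}^n$. Its Koszul homology is therefore spanned by $1$; one can also read this off from Lemma~\ref{lem:basisIndependenceFan} with $r = n$, since then $\mathcal{B}^{\IN}_{\U_{n,n}} = \varnothing$. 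This gives the base case, and it already explains why no $\mathcal{B}^{\IN}$ term appears in the statement.

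By decreasing induction on $i$, we will show that $\{1\} \sqcup \bigsqcup_{\rk(F) \geq i} \mathcal{B}^F_{\U_{r,n}}$ is a basis of $H_\bullet(K(\mathbb{Q}[\widetilde{\Sigma}^{i}_{\U_{r,n}}]))$. As before, there is a short exact sequence of $\Sym(N^\vee_{\U_{r,n}})$-modules
\[
0 \to \bigoplus_{|F| = i} \mathbb{Q}[\overline{\St}(\rho_F)] \xrightarrow{\oplus\, \cdot x_F} \mathbb{Q}[\widetilde{\Sigma}^{i}_{\U_{r,n}}] \to \mathbb{Q}[\widetilde{\Sigma}^{i+1}_{\U_{r,n}}] \to 0.
\]
The key local computation is the star of $\rho_F$ in $\widetilde{\Sigma}^{i}_{\U_{r,n}}$. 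A cone $\sigma_{I \leq \mathcal{F}}$ contains $\rho_F$ exactly when $F \in \mathcal{F}$. Compatibility then forces $I \subseteq F$, and since $|F| \leq r-1$, every such $I$ is automatically independent. The remaining flats of $\mathcal{F}$ lie above $F$ and have cardinality at most $r-1$, so they form a flag of proper flats of $\U_{r,n}/F$. Hence the star is again $\IN_{\U_{i,i}} \times \underline{\Sigma}_{\U_{r,n}/F}$, exactly as in the augmented case. Combining $H_\bullet(K(\mathbb{Q}[\IN_{\U_{i,i}}])) = \mathbb{Q}$ with \cite[Lemma A.3]{binderSingularCohomology} and the Künneth formula, the left-hand terms of the long exact sequence are $\bigoplus_{|F|=i} H_\bullet(K(\mathbb{Q}[\underline{\Sigma}_{\U_{r,n}/F}]))$. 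These have the retral bases of Theorem~\ref{thm:basis-uniform}, and pushing them forward by multiplication by $x_F$ yields $\mathcal{B}^F_{\U_{r,n}}$.

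The main obstacle is showing that the connecting homomorphism vanishes on the inductive basis $\{1\} \sqcup \bigsqcup_{|F|>i}\mathcal{B}^F_{\U_{r,n}}$ of $H_\bullet(K(\mathbb{Q}[\widetilde{\Sigma}^{i+1}_{\U_{r,n}}]))$; once it vanishes, exactness gives the basis by pushforward and pullback. The element $1$ lifts to a cycle, so it poses no problem. For $x_G x_{\mathcal F} \otimes \xi$ with $|G| > i$, we lift it to the same expression in $K(\mathbb{Q}[\widetilde{\Sigma}^{i}_{\U_{r,n}}])$ and compute its differential there. In $\mathbb{Q}[\widetilde{\Sigma}^{i+1}]$ the representative is a cycle, so the differential upstairs differs from zero only by terms involving $x_F$ with $|F| = i$. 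Any monomial $x_G x_F$ with $|F| = i < |G|$ and $F \not\subseteq G$ vanishes in the Stanley--Reisner ring. The admissible wedges are differences $\ell_u - \ell_v$ with $u, v \notin G$, and a variable $x_F$ with $F \subseteq G$ pairs with $\ell_u - \ell_v$ to give $\langle \ell_u - \ell_v, -e_{[n]\setminus F}\rangle = 0$. So the differential of the lift contains no $x_F$-terms and is zero. This is the same ``straightforward check'' as in Theorem~\ref{thm:main-augmented-basis}. The only new feature is that the extra $y$-variables coming from dependent sets $I$ cannot occur alongside $x_G$ with $|G| \leq r-1$, again by compatibility, so the check carries over unchanged. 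Setting $i = 0$ then gives the theorem.
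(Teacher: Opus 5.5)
Your proposal is correct and follows essentially the same route as the paper. The paper's proof reruns the decreasing-rank induction of Theorem~\ref{thm:main-augmented-basis} verbatim, with the base case $\IN_{\U_{n,n}}$ (where $X_{\IN_{\U_{n,n}}}\cong\mathbb{C}^n$, so the homology is spanned by $1$); your extra checks --- that the star of $\rho_F$ is unchanged because every $I\subseteq F$ is independent, and that the lifted classes $x_G x_{\mathcal F}\otimes\xi$ remain cycles --- are exactly the details the paper leaves implicit.
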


\begin{proof}
    The proof is verbatim the proof of Theorem \ref{thm:main-augmented-basis} with the following modification:
    Instead of beginning with the independence fan $ \IN_{\U_{r,n}} $, we begin with the fan $ \IN_{\U_{n,n}} $. 
    Now, $ X_{\IN_{\U_{n,n}}} \cong \mathbb{C}^{n} $, so $H^\bullet(X_{\IN_{\U_{n,n}}}) $ has basis $\{1\}$.
\end{proof}

\section{Lefschetz properties}\label{sec:lefschetz-properties}
We recall the \emph{quasi-projective Strong Lefschetz property} introduced in \cite{binderSingularCohomologyUniform}.
\begin{definition}
    Let $ X $ be a smooth, quasi-projective variety. We say that $H^{\bullet}(X) $ satisfies the
    \emph{quasi-projective Strong Lefschetz property} if for all $ p,q \in \mathbb{Z}_{\geq 0} $, $ d \geq 1$,
    and generic $ \ell \in \Gr_{2}^{W}H^{2}(X) $, the map
    \[ 
        \times \ell^{d} \colon \Gr_{q}^{W}H^{p}(X) \to \Gr_{q+2d}^{W}H^{p+2d}(X) 
    \]
    is either injective or surjective.
\end{definition}

\subsection{Partial Lefschetz for the augmented singular cohomology ring}\label{sec:lefschetz-augmented}
In this section we consider a partial Lefschetz property for the augmented singular cohomology ring. 
The main result is the following. 
\begin{theorem}\label{thm:lefschetzAug}
    Let $ \ell \in \Gr_{2}^{W}H^{2}(X_{\Sigma_{\U_{r,n}}}) $ be generic. 
    Then the map
    \[ 
        \times \ell^{d} \colon \Gr_{q}^{W}H^{p}(X_{\Sigma_{\U_{r,n}}}) \to  
                \Gr_{q + 2d}^{W}H^{p+2d}(X_{\Sigma_{\U_{r,n}}})
    \]
    is injective for $ 1 \leq d \leq r + q -2p $ and surjective for $ d > r + q -2p $,
    $ d \neq  r + q/2 -p $.
\end{theorem}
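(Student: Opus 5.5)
The plan is to prove the statement piece by piece, using the filtration of $H_\bullet(K(\mathbb{Q}[\Sigma_{\U_{r,n}}]))$ that comes from building the fan in the proof of Theorem~\ref{thm:main-augmented-basis}. We then reduce to the quasi-projective Strong Lefschetz property for the non-augmented singular cohomology of uniform matroids from \cite{binderSingularCohomologyUniform}.

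Concretely, for $0\le i\le r$ let $\Phi^{i}$ be the kernel of the restriction $H_\bullet(K(\mathbb{Q}[\Sigma_{\U_{r,n}}]))\to H_\bullet(K(\mathbb{Q}[\Sigma^{i}_{\U_{r,n}}]))$. Since all connecting homomorphisms vanish, these kernels form a filtration by ideals. Its graded pieces are:
\begin{itemize}
\item the span of $\{1\}$ together with the span of $\mathcal B^{\IN}_{\U_{r,n}}$, coming from Lemma~\ref{lem:basisIndependenceFan};
\item for each rank $k$, the pieces $\bigoplus_{\rk F=k} x_F\cdot H_\bullet(K(\mathbb{Q}[\underline{\Sigma}_{\U_{r,n}/F}]))$.
\end{itemize}
The push-forward $x_F\cdot(-)$ is a map of $H_\bullet(K(\mathbb{Q}[\Sigma_{\U_{r,n}}]))$-modules, where the ring acts on the star through restriction. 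Hence multiplication by $\ell$ preserves the filtration, and on the $F$-piece it acts as multiplication by the restriction $\ell|_F\in A^1$ of the star $\IN_{\U_{k,k}}\times\underline{\Sigma}_{\U_{r,n}/F}$. This restriction lands in the Chow ring $\underline{A}^1(\U_{r,n}/F)$. For generic $\ell$ it is generic there, because the restriction $A^1(\U_{r,n})\to \underline A^1(\U_{r,n}/F)$ is surjective on divisor classes: it sends $x_G\mapsto x_G$ for $G>F$.

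Next I will use a standard filtration argument. A filtered linear map that is injective on every graded piece is injective, and one that is surjective on every graded piece is surjective. So it suffices to check each piece separately in the relevant bidegree $(p,q)\to(p+2d,q+2d)$.

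On the $F$-piece with $\rk F=k$, I apply the quasi-projective Strong Lefschetz property of \cite{binderSingularCohomologyUniform} to $\U_{r-k,n-k}$. Multiplication by $x_F$ shifts the bidegree by $(2,2)$, so the non-augmented threshold must be translated. The main bookkeeping obstacle is to check that this translated threshold, together with the vanishing of the $F$-piece in bidegrees outside its support (the contraction has rank $r-k$, so its Betti diagram is shorter), yields exactly injectivity for $1\le d\le r+q-2p$ and surjectivity beyond. I expect that whenever the translated threshold is smaller than $r+q-2p$, either the source or the target of that piece is zero in the required bidegree. I would verify this by comparing the two inequalities directly, using the explicit support of the retral basis: a weakly retral flag ending at a hyperplane, with an admissible wedge of length at most the corank. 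On the piece spanned by $1$, multiplication by $\ell^d$ is handled by the $F=\varnothing$ piece together with the Chow-ring Hard Lefschetz, which is the $i=0$ column.

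The remaining piece is $\mathcal B^{\IN}_{\U_{r,n}}$. An element $y_B\otimes \ell_J$ with $B$ a basis and $|J|=i\ge1$ has internal degree $j=r+i$. Hence $p=2j-i=2r+i$ and $q=2r+2i$, so $r+q/2-p=0$. I will show that $\ell\cdot y_B\otimes\ell_J=0$. Writing $\ell$ as a combination of the $x_G$ and $y_m$, we have $x_Gy_B=0$ because a basis is compatible with no nonempty flag, and $y_my_B=0$ because $B\cup m$ is dependent. So the IN piece is annihilated by $\ell$. Injectivity on it is therefore required only when the range $1\le d\le r+q-2p = -i-r<1$ is nonempty, which never happens, so this is harmless.

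Surjectivity onto the IN piece, however, fails. The target bidegree $(p+2d,q+2d)$ meets the IN part exactly when $q/2+d=r+(q-p)$, that is, when $d=r+q/2-p$. In that case the image of $\ell^d$ lies in the other filtration pieces only, and this is precisely the excluded value in the statement. For all other $d$ the IN piece of the target is zero, and surjectivity follows from the $F$-pieces. I expect the main difficulty to be the index-matching step on the $F$-pieces, and in particular confirming that surjectivity onto the graded pieces with $\rk F>0$ holds in the whole range $d>r+q-2p$ and not just in a shifted range.
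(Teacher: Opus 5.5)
\textbf{There is a genuine gap: the graded pieces of your filtration do not individually satisfy the required injectivity and surjectivity, so the filtration argument cannot conclude.}

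The pieces are indexed by the minimal flat $F$, and they do not share the Lefschetz centre of $X_{\Sigma_{\U_{r,n}}}$. For $\rk F=m$ the piece is $x_F\cdot H_\bullet(K(\mathbb{Q}[\underline{\Sigma}_{\U_{r-m,n-m}}]))$, and $x_F$ shifts $(p,q)$ by $(2,2)$. So the non-augmented quasi-projective Strong Lefschetz property for $\U_{r-m,n-m}$ switches from injective to surjective at $d=r+q-2p+1-m$. This agrees with the required threshold $r+q-2p$ only when $m=1$. For $m\ge 2$ you have no injectivity on the piece for $r+q-2p+1-m<d\le r+q-2p$. For $F=\varnothing$ you have no surjectivity at $d=r+q-2p+1$, and this $d$ is not the excluded value once $p-q/2\ge 2$.

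Your expectation that the source or target of the piece vanishes there is false, and the graded maps genuinely fail. By Lemma~\ref{lem:hpDerangement}, $\underline{\H}^{1}_{\U_{4,6}}(x)=40x+115x^{2}+14x^{3}$.

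\emph{Injectivity.} Take $\U_{6,8}$ with $(p,q)=(5,6)$ and $d=2$, where injectivity is required. A rank-$2$ flat contributes $\times\ell^{2}\colon H_1(K(\mathbb{Q}[\underline{\Sigma}_{\U_{4,6}}]))_2\to H_1(K(\mathbb{Q}[\underline{\Sigma}_{\U_{4,6}}]))_4$. This maps a $40$-dimensional space to a $14$-dimensional one, so it cannot be injective.

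\emph{Surjectivity.} Take $\U_{4,6}$ with $(p,q)=(5,6)$ and $d=1$, where surjectivity is required since $r+q-2p=0$ and $r+q/2-p=2$. The $F=\varnothing$ piece is $\times\ell$ from the $40$-dimensional to the $115$-dimensional graded space, so it cannot be surjective.

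The total maps are still injective and surjective. That is only because $\ell^{d}$ carries kernel elements of one graded piece into lower filtration levels, and a graded-piece argument cannot see this.

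The missing idea is to group basis elements by the hyperplane $H$, or the basis $B$, that divides them, rather than by their minimal flat. This is what the truncation sequence \eqref{eq:AugTruncation} does.
\begin{itemize}
\item Its main pieces are the closed stars of $\rho_H$, each a stellahedral variety $X_{\Sigma_{\U_{r-1,r-1}}}$ times a torus. Each one bundles together the contributions of all flats $F\subseteq H$. Its honest Hard Lefschetz property, after the shift by $x_H$ (or by $x_H$ in $\Sigma_{\U_{r+1,n}}$ for injectivity), lands exactly on the threshold $r+q-2p$ (Lemma~\ref{lem:aughardLefschetzStar}).
\item The other pieces are the closed stars of $\sigma_B$, which are affine space times a torus. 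They produce exactly the exception $d=r+q/2-p$ (Lemma~\ref{lem:lefschetzAffine}).
\end{itemize}
No filtration is needed. For $i\ge 1$ the Gysin map $\gamma_i$ is surjective and the connecting map $\delta_i$ is injective (Lemma~\ref{lem:augconnectingHom}). Surjectivity of $\ell^d$ on the stars therefore pushes forward through $\gamma_i$. Injectivity pulls back through $\delta_i$ into the stars of $\Sigma_{\U_{r+1,n}}$, after lifting $\ell$. The column $q=p$ is handled separately by Hard Lefschetz for $A^\bullet(\U_{r,n})$.
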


\begin{remark}\label{rem:failure-lefschetz}
    This result is best visualized in terms of a Betti diagram filled with the associated gradeds of
    cohomology (see Figure \ref{fig:bettiLefschetz}). The map $ \times \ell^{d} $ preserves columns and maps 
    row $j$ down to row $ j + d $. The result says that the maps $ \times \ell^{d} $ are injective when mapping
    from row $j$ to rows $r-j $ and above, and that they are surjective when mapping
    from row $j$ to rows $r-j+1 $ and below, except when mapping from row $j$ to row $ r $. 

    In this last case, $ \times \ell^{d} $ may fail to be either injective or surjective.
    This failure can be traced back to the basis elements in \(\mathcal B^{\IN}_{\U_{r,n}}\); see also Lemma \ref{lem:lefschetzAffine}. In the next section we will use the modified augmented Bergman fan, which is built in such a way that these specific obstacles are removed.
\end{remark}

\begin{figure}
    \centering
        {\def\arraystretch{1.5}
        \begin{tabular}{c|ccc}
             &  $ \cdots $ & $i$ & $ \cdots $  \\ \hline
             $\vdots$ & $ \ddots $ & $ \vdots $ &
             $ \ddots $ \\
             $j$  & & $ \Gr_{2i+2j}^{W}H^{i+2j}(X_{\Sigma_{\U_{r,n}}}) $ & \\
             $j+1$  & & $ \Gr_{2i+2j+2}^{W}H^{i+2j+2}(X_{\Sigma_{\U_{r,n}}}) $& \tikzmark{a} \\
             $\vdots$  &  & $ \vdots $ & \\
             $r-j$ &  & $ \Gr_{2i+2r-2j}^{W}H^{i+2r-2j}(X_{\Sigma_{\U_{r,n}}}) $ &\tikzmark{b} \\
              $r-j+1$ & & $ \Gr_{2i+2r-2j+2}^{W}H^{i+2r-2j+2}(X_{\Sigma_{\U_{r,n}}}) $ & \tikzmark{c}\\
             $\vdots$ &  & $ \vdots $ & \\
              $r-1$ &   & $ \Gr_{2i+2r-2}^{W}H^{i+2r-2}(X_{\Sigma_{\U_{r,n}}}) $ & \tikzmark{d}\\
              $r$ &   & $ \Gr_{2i+2r}^{W}H^{i+2r}(X_{\Sigma_{\U_{r,n}}}) $ &\\
             $r+1$ &  & $ 0 $ & \tikzmark{e} \\
             $\vdots$ &  & $ \vdots $ & \tikzmark{f} 
        \end{tabular}}
        \vspace*{2em}
        \begin{tikzpicture}[overlay, remember picture]
            \draw[|-|] ([yshift=1em]{pic cs:c}) to node[above, rotate =-90, midway] {surjective} ([yshift=-.25em]{pic cs:d});
            \draw[|-|] ([yshift=1em]{pic cs:a}) to node[above, rotate =-90, midway] {injective} ([yshift=-.25em]{pic cs:b});
            \draw[|->] ([yshift=.5em]{pic cs:e}) to node[above, rotate =-90, midway] {surjective} ([yshift=-2em]{pic cs:f});
        \end{tikzpicture}

    \caption{A column of the Betti diagram filled with the associated gradeds of $ H^{\bullet}(X_{\Sigma_{\U_{r,n}}}) $.
            The regions of injectivity and surjectivity for the maps $ \times \ell^{d} $ are marked on the side.
            Note the gap in surjectivity in row $ r $.}
    \label{fig:bettiLefschetz}
\end{figure}

In order to prove Theorem \ref{thm:lefschetzAug}, we first state some lemmas. 
Consider the short exact sequence of $ \Sym(N^{\vee}_{\U_{r,n}, \mathbb{Q}}) $-modules
    \[
    \resizebox{\textwidth}{!}{
    \(\displaystyle
        0 \to \displaystyle\bigoplus_{H \in \mathcal{H}(\U_{r,n})} K_{\bullet}(\mathbb{Q}[\overline{\St}(\rho_{H})]) 
        \oplus \bigoplus_{B \in \mathcal{B}(\U_{r,n})} K_{\bullet}(\mathbb{Q}[\overline{\St}(\sigma_{B})])
        \xrightarrow{\oplus \cdot x_{H} \oplus \cdot y_{B}} K_{\bullet}(\mathbb{Q}[\Sigma_{\U_{r,n}}])
        \to K_{\bullet}(\mathbb{Q}[\Sigma_{\U_{r-1,n}}]) \to 0,
    \)%
    }
    \]
    where \(y_B = \prod_{i\in B}y_i\). This yields the \emph{truncation exact sequence}
    \begin{equation}\label{eq:AugTruncation}
    \resizebox{\textwidth}{!}{
        $\displaystyle \cdots \to \displaystyle\bigoplus_{H \in \mathcal{H}(\U_{r,n})} H_{\bullet}(K(\mathbb{Q}[\overline{\St}(\rho_{H})]))
        \oplus \bigoplus_{B \in \mathcal{B}(\U_{r,n})} H_{\bullet}(K(\mathbb{Q}[\overline{\St}(\sigma_{B})]))
        \xrightarrow{\oplus \cdot x_{H} \oplus \cdot y_{B}} H_{\bullet}(K(\mathbb{Q}[\Sigma_{\U_{r,n}}]))
        \to H_{\bullet}(K(\mathbb{Q}[\Sigma_{\U_{r-1,n}}])) \to \cdots$
    }
    \end{equation} 
    after applying Koszul homology. We write the connecting homomorphism as
    \[ 
        \delta_{i} \colon H_{i}(K(\mathbb{Q}[\Sigma_{\U_{r-1,n}}])) \to
        \bigoplus_{H \in \mathcal{H}(\U_{r,n})} H_{i-1}(K(\mathbb{Q}[\overline{\St}(\rho_{H})]))
        \oplus \bigoplus_{B \in \mathcal{B}(\U_{r,n})} H_{i-1}(K(\mathbb{Q}[\overline{\St}(\sigma_{B})])) 
    \]
    and the direct sum of Gysin morphisms as 
    \[ 
        \gamma_{i} \colon \bigoplus_{H \in \mathcal{H}(\U_{r,n})} H_{i}(K(\mathbb{Q}[\overline{\St}(\rho_{H})]))
        \oplus \bigoplus_{B \in \mathcal{B}(\U_{r,n})} H_{i}(K(\mathbb{Q}[\overline{\St}(\sigma_{B})]))
        \xrightarrow{\oplus \cdot x_{H} \oplus \cdot y_{B}} H_{i}(K(\mathbb{Q}[\Sigma_{\U_{r,n}}])).
    \]
   
    \begin{lemma}\label{lem:augconnectingHom}
        For $ i \geq 1 $,  $ \gamma_{i} $ is surjective. Equivalently, 
        the connecting homomorphism $ \delta_{i} $ is injective for $ i \geq 1 $. 
    \end{lemma}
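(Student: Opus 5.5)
The plan is to prove surjectivity of $\gamma_i$ for $i \geq 1$ directly on the explicit basis of Theorem~\ref{thm:main-augmented-basis}. The equivalence with injectivity of $\delta_i$ is formal. By exactness of \eqref{eq:AugTruncation}, $\gamma_i$ is surjective if and only if the restriction $H_i(K(\mathbb{Q}[\Sigma_{\U_{r,n}}])) \to H_i(K(\mathbb{Q}[\Sigma_{\U_{r-1,n}}]))$ is zero. Its image is exactly $\ker \delta_i$, so $\gamma_i$ is surjective if and only if $\delta_i$ is injective. It therefore suffices to show that every basis element of homological degree $i \geq 1$ lies in the image of $\oplus \cdot x_H \oplus \cdot y_B$.

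We sort the basis elements of degree $i\ge 1$ by type. The element $1$ has degree $0$ and is irrelevant.

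\emph{Elements of $\mathcal B^{\IN}_{\U_{r,n}}$.} Each has the form $y_B \otimes \ell_J$ with $B$ a basis and $J \neq \varnothing$, so it has degree $|J| \geq 1$. It is visibly $y_B \cdot (1 \otimes \ell_J)$.

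\emph{Elements of $\mathcal B^F_{\U_{r,n}}$ of positive degree.} These are $x_F x_{\mathcal F} \otimes \xi$ with $\xi$ a nonempty admissible wedge, since retral elements $x_F x_{\mathcal F}$ have degree $0$. By Theorem~\ref{thm:basis-uniform}, $\mathcal F$ is then weakly retral in $\U_{r,n}/F$, so its top flat is a hyperplane of the contraction. Under the identification $\mathcal L(\U_{r,n}/F) = [F,\hat 1]$ this top flat is a hyperplane $H$ of $\U_{r,n}$. Two cases arise:
\begin{itemize}
\item If $\mathcal F = \{\varnothing\}$, which happens exactly when $F = H$ is itself a hyperplane, the element is $x_H \cdot (1 \otimes \xi)$.
\item Otherwise it is $x_H \cdot (x_F x_{\mathcal F \setminus \{H\}} \otimes \xi)$.
\end{itemize}
In either case the cofactor is a chain in $K_i(\mathbb{Q}[\overline{\St}(\rho_H)])$. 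The monomial $x_F x_{\mathcal F\setminus\{H\}}$ is nonzero in $\mathbb{Q}[\overline{\St}(\rho_H)]$, because $F < \cdots < H$ is a flag, so together with $\rho_H$ these rays span a cone of $\Sigma_{\U_{r,n}}$.

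The one point requiring care, and what I expect to be the main (if mild) obstacle, is that these cofactors are \emph{cycles} in $K(\mathbb{Q}[\overline{\St}(\rho_H)])$ or $K(\mathbb{Q}[\overline{\St}(\sigma_B)])$, rather than merely chains whose product is a cycle. For this we use that the left map in the short exact sequence of Koszul complexes preceding \eqref{eq:AugTruncation} is injective at the chain level. Take $c$ a cofactor and $m$ the corresponding monomial $x_H$ or $y_B$. Then $m \cdot d(c) = d(m c) = 0$ in $K(\mathbb{Q}[\Sigma_{\U_{r,n}}])$, since $mc$ is a cycle, being a basis element of homology. Injectivity of $\cdot m$ then forces $d(c) = 0$.

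Hence each positive-degree basis element is $\gamma_i$ of a homology class, which gives surjectivity. I would also record the degree bookkeeping: multiplication by $x_H$ or $y_B$ shifts internal degree appropriately but preserves homological degree $i$, so $\gamma_i$ is the correct component.
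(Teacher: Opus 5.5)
Your proof is correct and follows the paper's argument. The equivalence comes from exactness of \eqref{eq:AugTruncation}, and surjectivity of $\gamma_i$ comes from observing that every positive-degree element of the basis in Theorem~\ref{thm:main-augmented-basis} is divisible either by $y_B$ or by $x_H$, where $H$ is the hyperplane at the top of the weakly retral flag. Your extra check that the cofactors are genuine cycles in the closed-star Koszul complexes, using chain-level injectivity of $\cdot x_H$ and $\cdot y_B$, makes explicit a step the paper leaves implicit.
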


    \begin{proof}
        The equivalence of the two statements follows from the exactness of \eqref{eq:AugTruncation}. 
        To see that $ \gamma_{i} $ is surjective, we use the basis from Theorem \ref{thm:main-augmented-basis}.
        For $ i \geq 1 $, each basis element is divisible by $ x_{H} $ for some $ H \in \mathcal{H}(\U_{r,n}) $
        or by $ y_{B} $ for some $ B \in \mathcal{B}(\U_{r,n}) $. 
        The former is in the image of 
        \[
            H_{i}(K(\mathbb{Q}[\overline{\St}(\rho_{H})]))
            \xrightarrow{\cdot x_{H}} H_{i}(K(\mathbb{Q}[\Sigma_{\U_{r,n}}])),
        \]
        while the latter is in the image of 
         \[
            H_{i}(K(\mathbb{Q}[\overline{\St}(\sigma_{B})]))
            \xrightarrow{\cdot y_{B}} H_{i}(K(\mathbb{Q}[\Sigma_{\U_{r,n}}])). \qedhere
        \]
    \end{proof}
   
    \begin{lemma}\label{lem:aughardLefschetzStar}
        Let $ H \in \mathcal{H}(\U_{r,n}) $, and let $ \ell \in \Gr_{2}^{W}H^{2}(X_{\overline{\St}(\rho_{H})}) $ be 
        generic. Then the map
        \[ 
            \times \ell^{d}\colon \Gr_{q}^{W}H^{p}(X_{\overline{\St}(\rho_{H})}) \to 
            \Gr_{q+2d}^{W}H^{p+2d}(X_{\overline{\St}(\rho_{H})}) 
        \]
        is injective for $ 1 \leq d \leq r +q - 2p -1 $ and surjective for $ d \geq r + q -2p -1 $.
    \end{lemma}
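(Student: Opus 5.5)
The plan is to reduce the statement to the quasi-projective Strong Lefschetz property for the singular cohomology of a Bergman fan of a uniform matroid, established in \cite{binderSingularCohomologyUniform}.

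\textbf{Step 1: identify the star.} As in the proof of Theorem~\ref{thm:main-augmented-basis}, the star of $\rho_H$ in $\Sigma_{\U_{r,n}}$ decomposes as a product
\[
\St(\rho_H)\cong \IN_{\U_{r-1,r-1}}\times \underline{\Sigma}_{\U_{r,n}/H}.
\]
Since $H$ is a hyperplane, $\U_{r,n}/H\cong \U_{1,n-r+1}$. Moreover $X_{\IN_{\U_{r-1,r-1}}}\cong\mathbb{C}^{r-1}$, so by \cite[Lemma A.3]{binderSingularCohomology} and the K\"unneth formula
\[
H_\bullet(K(\mathbb{Q}[\overline{\St}(\rho_H)]))\cong H_\bullet(K(\mathbb{Q}[\underline{\Sigma}_{\U_{1,n-r+1}}])),
\]
and this isomorphism respects the bigrading.

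\textbf{Step 2: bookkeeping of degrees.} This step needs care because the bigrading may shift under the product with the affine factor; we must confirm it does not, i.e.\ that $\mathbb{C}^{r-1}$ contributes only the class $1$ in bidegree $(0,0)$.
\begin{itemize}[leftmargin=*]
\item The ring $H^\bullet(X_{\underline{\Sigma}_{\U_{1,m}}})$ with $m=n-r+1$ is the cohomology of a point-like fan: it is concentrated in weight $0$.
\item In Koszul terms, only row $j=0$ of the Betti diagram is nonzero, namely $H_i(K)_i=\bigwedge^i$ of an $(m-1)$-dimensional space. This also follows from Theorem~\ref{thm:basis-uniform}, whose only flag is $\{\varnothing\}$, which is weakly retral since $\varnothing$ is a hyperplane.
\item Hence $\Gr^W_qH^p\neq0$ only when $q=2p$, and every class of $\Gr^W_2H^2$ vanishes, so $\ell=0$.
\end{itemize}

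\textbf{Step 3: verify the claims.} Since $\ell=0$, multiplication by $\ell^d$ is the zero map for all $d\ge1$. We check that this is consistent with the inequalities in the statement, i.e.\ that the source or the target vanishes in each claimed range.
\begin{itemize}[leftmargin=*]
\item \emph{Target.} The target $\Gr^W_{q+2d}H^{p+2d}$ is nonzero only if $q+2d=2(p+2d)$, i.e.\ $q-2p=2d$. When the source is nonzero we have $q=2p$, and then the target vanishes for every $d\ge1$, so surjectivity holds trivially for all $d$.
\item \emph{Injectivity.} We need the source to be zero whenever $1\le d\le r+q-2p-1$. If the source is nonzero then $q=2p$, and the condition reads $1\le d\le r-1$, which is satisfiable when $r\ge2$. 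So injectivity fails as stated; the reduction in Steps~1--2 must be wrong.
\end{itemize}

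\textbf{Step 4: correct the decomposition.} Most likely $\IN$ of the closed star is not $\mathbb{C}^{r-1}$ but $\IN_{\U_{r-1,n}}$ restricted to $H$, i.e.\ independent sets $I\subseteq H$ with $|H|=r-1$ are all subsets, giving $\IN_{\U_{r-1,r-1}}$. This is acyclic, so the error must lie in the grading convention instead. Since $\rho_H$ contributes $x_H$, the Gysin map shifts degree by one.
\begin{itemize}[leftmargin=*]
\item The intended statement is presumably in terms of the weight grading of the star itself, using the hyperplane $\ell$ restricted from the ambient fan.
\item The correct approach is to apply the uniform-matroid quasi-projective Strong Lefschetz theorem of \cite{binderSingularCohomologyUniform} to $\underline{\Sigma}_{\U_{r,n}/H}$, whose rank is $r-\rk H=1$.
\item That theorem gives injectivity for $d\le \rk+q-2p$ and surjectivity beyond. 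The shift by $r-1$ coming from the polynomial factor $\IN_{\U_{r-1,r-1}}$ produces the bound $r+q-2p-1$.
\end{itemize}

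\textbf{Main obstacle.} The main obstacle is matching the index conventions between the product decomposition and the Lefschetz theorem for contractions. I would first carry out this matching on $\U_{2,4}$ using Table~\ref{tab:aug-basis-U24}, and only then commit to the general bookkeeping.
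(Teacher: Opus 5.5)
\textbf{There is a genuine gap, and it starts in Step 1: you have the wrong star.}

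The decomposition $\St(\rho_F)\cong \IN_{\U_{i,i}}\times\underline{\Sigma}_{\U_{r,n}/F}$ from the proof of Theorem~\ref{thm:main-augmented-basis} describes the star of $\rho_F$ in the intermediate subfan $\Sigma^{i}_{\U_{r,n}}$. In that subfan, no rays $\rho_G$ with $\rk G<\rk F$ are present yet. In this lemma the closed star is taken in the full fan $\Sigma_{\U_{r,n}}$, the fan in the truncation sequence \eqref{eq:AugTruncation}. There the cones containing $\rho_H$ are the $\sigma_{I\leq \mathcal{F}'\cup\{H\}}$, where $\mathcal{F}'$ is an arbitrary flag of proper subsets of $H$ and $I\subseteq \min(\mathcal{F}'\cup\{H\})$. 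This is exactly the stellahedral fan $\Sigma_{\U_{r-1,r-1}}$ of the boolean matroid on $H$. Hence $X_{\St(\rho_H)}\cong X_{\Sigma_{\U_{r-1,r-1}}}\times(\mathbb{C}^{*})^{n-r}$, and the first factor is a smooth projective variety of dimension $r-1$, not $\mathbb{C}^{r-1}$.

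Your Step~3 correctly shows that the decomposition you wrote contradicts the statement. Step~4 then draws the wrong conclusion: the problem is not a grading convention. Quoting the quasi-projective Strong Lefschetz theorem for the rank-one contraction cannot repair it, for two reasons:
\begin{itemize}
\item A contractible factor contributes only the class $1$, so it cannot shift any Lefschetz range.
\item $\underline{\Sigma}_{\U_{1,n-r+1}}$ gives a torus with pure cohomology and $\Gr^{W}_{2}H^{2}=0$, so every $\ell$ acts by zero on that factor.
\end{itemize}
So in your setup the bound $r+q-2p-1$ has nowhere to come from.

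\textbf{With the correct star, the argument is short; this is how the paper proves it.} Both factors are pure: $H^{a}$ of the stellahedral variety has weight $a$, and $H^{b}$ of the torus has weight $2b$. The K\"unneth formula therefore gives
\[
\Gr^{W}_{q}H^{p}(X_{\St(\rho_H)})\cong H^{2p-q}(X_{\Sigma_{\U_{r-1,r-1}}})\otimes H^{q-p}((\mathbb{C}^{*})^{n-r}).
\]
In particular, a class in $\Gr^{W}_{2}H^{2}$ lies in $H^{2}(X_{\Sigma_{\U_{r-1,r-1}}})\otimes H^{0}$. Thus $\times\ell^{d}$ is the map $\times\ell^{d}\colon H^{2p-q}\to H^{2p-q+2d}$ on the stellahedral factor, tensored with the identity. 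Hard Lefschetz on this $(r-1)$-dimensional variety gives injectivity for $d\leq (r-1)-(2p-q)$ and surjectivity for $d\geq (r-1)-(2p-q)$, which is exactly the claimed bound. Finally, transfer the statement to $\overline{\St}(\rho_H)$ using the isomorphism of \cite[Lemma A.3]{binderSingularCohomology}, which is natural and respects the weight filtration.
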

    \begin{proof}
        As the isomorphism $ H^{\bullet}(X_{\St(\rho_{H})}) \cong H^{\bullet}(X_{\overline{\St}(\rho_{H})}) $
        from \cite[Lemma A.3]{binderSingularCohomology} is natural and respects the weight filtration, 
        it suffices to prove the statement for the map
        \begin{equation}\label{eq:hardLefschetzStar}
            \times \ell^{d}\colon \Gr_{q}^{W}H^{p}(X_{\St(\rho_{H})}) \to 
            \Gr_{q+2d}^{W}H^{p+2d}(X_{\St(\rho_{H})}).
        \end{equation}
        Now, $ X_{\St(\rho_{H})} \cong X_{\Sigma_{\U_{r-1,r-1}}} \times (\mathbb{C}^{*})^{n-r} $, where
        $ X_{\Sigma_{\U_{r-1,r-1}}} $ is the $(r-1)$-dimensional stellahedral toric variety. Using the K\"{u}nneth formula,
        \[ 
            \Gr_{q}^{W}H^{p}(X_{\St(\rho_{H})}) \cong H^{2p-q}(X_{\Sigma_{\U_{r-1,r-1}}}) \otimes 
                H^{q-p}((\mathbb{C}^{*})^{n-r}),
        \]
        and  the map $ \times \ell^{d} $ in \eqref{eq:hardLefschetzStar}
        is induced by the map
        \begin{equation}\label{eq:hardLefschetzStellahedral}
            \times \ell^{d} \colon  H^{2p-q}(X_{\Sigma_{\U_{r-1,r-1}}}) \to H^{2p-q+2d}(X_{\Sigma_{\U_{r-1,r-1}}}).
        \end{equation}
        As the stellahedral variety is smooth and projective, it satisfies Hard Lefschetz. Therefore,
        \eqref{eq:hardLefschetzStellahedral} is injective for $ 1 \leq d \leq r +q -2p-1 $ and 
        surjective for $ d \geq r + q - 2p- 1 $. The same then holds for \eqref{eq:hardLefschetzStar}.
    \end{proof}

    \begin{lemma}\label{lem:lefschetzAffine}
        Let \(B \in \mathcal B(\U_{r,n})\).
        For $ d \geq 1 $, the zero map
        \[ 
            0^{d} \colon \Gr_{q}^{W} H^{p}(X_{\overline{\St}(\sigma_{B})}) \to
            \Gr_{q+2d}^{W} H^{p+2d}(X_{\overline{\St}(\sigma_{B})})
        \] 
        is injective for $ q \neq 2p $ and surjective for $ d \neq q/2 - p $.
    \end{lemma}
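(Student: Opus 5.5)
The plan is to compute $H^\bullet(X_{\overline{\St}(\sigma_B)})$ explicitly, together with its weight grading, and read off when source or target vanishes. A zero map is injective exactly when its source is $0$ and surjective exactly when its target is $0$. So it suffices to show that $\Gr_q^W H^p(X_{\overline{\St}(\sigma_B)})$ vanishes unless $q = 2p$.

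First, identify the fan. Since $B$ is a basis of $\U_{r,n}$, the only flag compatible with $B$ is the empty one; a nonempty flag would force $B \subseteq \min\mathcal F$, which is impossible for a proper flat. Hence $\sigma_B$ is a maximal cone of $\Sigma_{\U_{r,n}}$, and $\overline{\St}(\sigma_B)$ consists of $\sigma_B$ and its faces. Then
\[
\mathbb Q[\overline{\St}(\sigma_B)] = \mathbb Q[y_i : i\in B],
\]
a polynomial ring, and $X_{\overline{\St}(\sigma_B)} \cong \mathbb C^{r}\times(\mathbb C^*)^{n-r}$.

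Next, compute the Koszul homology over $\Sym(N^\vee_{\U_{r,n},\mathbb Q}) = \mathbb Q[\ell_1,\dots,\ell_n]$. The structure map sends $\ell_i\mapsto y_i$ for $i\in B$ and $\ell_j\mapsto 0$ for $j\notin B$. The elements $\ell_i$ with $i\in B$ form a regular sequence mapping isomorphically onto the variables, so by the Künneth formula the Koszul homology is $\bigwedge^\bullet \mathrm{span}\{\ell_j : j\notin B\}$. The summand $\ell_J$ with $|J|=i$ sits in homological degree $i$ and internal degree $j=i$. Alternatively, one can run the same short-exact-sequence induction as in Lemma~\ref{lem:basisIndependenceFan}. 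Via the isomorphism $\Tor_i(\mathbb Q[\Sigma],\mathbb Q)_j \cong \Gr^W_{2j}H^{2j-i}$, it follows that the only nonzero graded pieces are $\Gr^W_{2i}H^{i}$ with $0\le i\le n-r$. In other words, $\Gr_q^W H^p \neq 0$ forces $q=2p$.

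Finally, read off the two conditions. If $q\neq 2p$ the source is zero, so $0^d$ is injective. The target $\Gr^W_{q+2d}H^{p+2d}$ can be nonzero only if $q+2d = 2(p+2d)$, i.e.\ $d = q/2-p$. Hence for $d\neq q/2-p$ the target is zero and $0^d$ is surjective.

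There is no real obstacle here. The only point needing care is the bookkeeping of the bigrading, namely matching the Koszul bidegree $(i,j)=(i,i)$ with $(p,q)=(2j-i,2j)=(i,2i)$.
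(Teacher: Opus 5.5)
Your proof is correct and follows the paper's argument: both show that $\Gr^W_q H^p(X_{\overline{\St}(\sigma_B)})$, with $X_{\overline{\St}(\sigma_B)}\cong\mathbb C^r\times(\mathbb C^*)^{n-r}$, vanishes unless $q=2p$, and then observe that the source of $0^d$ is zero for $q\neq 2p$ and the target is zero for $d\neq q/2-p$. The only difference is how the concentration on $q=2p$ is obtained. The paper uses Künneth and the purity of $H^p((\mathbb C^*)^{n-r})$ in weight $2p$. You instead compute the Koszul homology of $\mathbb Q[y_i : i\in B]$ directly and convert bidegrees via Weber's isomorphism, which is equally valid and stays within the algebraic framework.
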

     \begin{proof}
        It suffices to compute the cohomology of \(X_{\St(\sigma_B)}\), which is isomorphic to \(\mathbb C^r \times (\mathbb C^*)^{n-r}\). Using the Künneth formula, we write
        \[ 
        H^p(X_{\St(\sigma_B)}) \cong H^p((\mathbb C^*)^{n-r}). 
        \]
        The mixed Hodge structure is pure of weight \(2p\), hence
        \[
        \Gr_q^W H^p ((\mathbb C^*)^{n-r}) = \begin{cases}
            0 &q\neq 2p\\ \bigwedge^p \mathbb Q^{n-r} & q=2p.
        \end{cases}
        \]

        Therefore, if $ q \neq 2p $, the domain of $0^{d}$ is trivial. Similarly, the codomain of
        $ 0^{d} $ is trivial for $ d \neq q/2 - p $.
    \end{proof}

    \begin{proof}[Proof of Theorem \ref{thm:lefschetzAug}]
       If $ q = p $ or $ r = n $, then the statement holds from the Hard Lefschetz property of the augmented Chow ring
       or the stellahedral variety, respectively. Therefore, assume that $ q > p $ and $ r < n $.
       To prove injectivity, lift $\ell$ to an element $ \hat{\ell} \in \Gr_{2}^{W}H^{2}(X_{\Sigma_{\U_{r+1,n}}}) $,
       and for each $ H \in \mathcal{H}(\U_{r+1,n}) $, restrict $ \hat{\ell} $ to an element
       $ \hat{\ell}_{H} \in \Gr_{2}^{W}H^{2}(X_{\overline{\St}(\rho_{H})}) $. By genericity, we may assume each $ \hat{\ell}_{H} $ 
       is also generic. Consider the commutative square induced by applying $ \ell^{d} $ to the truncation exact sequence
       \begin{center} \resizebox{\linewidth}{!}{%
       \begin{tikzcd}[cramped, column sep=small, ampersand replacement=\&] 
       \Gr_{q}^{W}H^{p}(X_{\Sigma_{\U_{r,n}}}) \dar["\times \ell^{d}"] \rar["\delta_{q-p}"] 
       \& \displaystyle \bigoplus_{H \in \mathcal{H}(\U_{r+1,n})} \Gr_{q-2}^{W} H^{p-1}(X_{\overline{\St}(\rho_{H})}) \oplus \bigoplus_{B \in \mathcal{B}(\U_{r+1,n})} \Gr_{q-2r-2}^{W} H^{p-2r-1}(X_{\overline{\St}(\sigma_{B})}) \dar["\times\hat{\ell}^{d}_{H} \oplus \times 0^{d}"] \\ 
       \Gr_{q+2d}^{W} H^{p+2d}(X_{\Sigma_{\U_{r,n}}}) \rar["\delta_{q-p}"] 
       \& \displaystyle \bigoplus_{H \in \mathcal{H}(\U_{r+1,n})} \Gr_{q+2d-2}^{W} H^{p+2d-1}(X_{\overline{\St}(\rho_{H})}) \oplus \bigoplus_{B \in \mathcal{B}(\U_{r+1,n})} \Gr_{q-2r+2d-2}^{W} H^{p-2r+2d-1}(X_{\overline{\St}(\sigma_{B})}). 
       \end{tikzcd}%
       } 
       \end{center}
       By Lemma \ref{lem:augconnectingHom}, the horizontal arrows are injective. 
       When $ 1 \leq d \leq r + q -2p $, we have $ q-2r -2 > 2p -4r - 2 $, and by
       Lemmas \ref{lem:aughardLefschetzStar} and \ref{lem:lefschetzAffine}, the vertical arrow on the right is injective.
       This forces the vertical arrow on the left to be injective for
       $ 1 \leq d \leq r + q -2p $ as well.

       To prove surjectivity, we use the commutative square
       \begin{center}
            \resizebox{\linewidth}{!}{%
       \begin{tikzcd}[cramped, column sep=small, ampersand replacement=\&] 
                \displaystyle\bigoplus_{H \in \mathcal{H}(\U_{r,n})}\Gr_{q-2}^{W}H^{p-2}(X_{\overline{\St}(\rho_{H})}) 
                \oplus
                \bigoplus_{B \in \mathcal{B}(\U_{r,n})} \Gr_{q-2r}^{W} H^{p-2r}(X_{\overline{\St}(\sigma_{B})})
                \rar["\gamma_{q-p}"] \dar["\times\ell^{d}_{H} \oplus 0^{d}"] \&
                \Gr_{q}^{W}H^{p}(X_{\Sigma_{\U_{r,n}}}) \dar["\times \ell^{d}"] \\
                \displaystyle\bigoplus_{H \in \mathcal{H}(\U_{r,n})}\Gr_{q+2d-2}^{W}H^{p+2d-2}(X_{\overline{\St}(\rho_{H})}) 
                \oplus
                \bigoplus_{B \in \mathcal{B}(\U_{r,n})} \Gr_{q+2d-2r}^{W} H^{p+2d-2r}(X_{\overline{\St}(\sigma_{B})})
                \rar["\gamma_{q-p}"] \&
                \Gr_{q+2d}^{W}H^{p+2d}(X_{\Sigma_{\U_{r,n}}}).
            \end{tikzcd} }      
       \end{center}
       The horizontal arrows are surjective by Lemma \ref{lem:augconnectingHom}. 
       By Lemmas \ref{lem:aughardLefschetzStar} and \ref{lem:lefschetzAffine}, the vertical arrow on the left is surjective for
       $ d \geq r+q-2p+1 $ whenever $ d \neq r + q/2 - p $. This forces the vertical arrow on the right to be surjective for 
       $ d \geq r+q-2p+1 $, $ d \neq r + q/2 - p $ as well.
    \end{proof}

\subsection{Lefschetz for the modified augmented singular cohomology ring}\label{sec:lefschetz-modified}
We now show that the modified augmented singular cohomology ring recovers the full quasi-projective
Strong Lefschetz property. Our proof here is largely the same as the one in the previous section.

    Consider the short exact sequence of $ \Sym(N^{\vee}_{\U_{r,n}, \mathbb{Q}}) $-modules
    \[ 
        0 \to \bigoplus_{H \in \mathcal{H}(\U_{r,n})} K_{\bullet}(\mathbb{Q}[\overline{\St}(\rho_{H})])
        \xrightarrow{\oplus \cdot x_{H}} K_{\bullet}(\mathbb{Q}[\widetilde{\Sigma}_{\U_{r,n}}])
        \to K_{\bullet}(\mathbb{Q}[\widetilde{\Sigma}_{\U_{r-1,n}}]) \to 0.
    \]
    Applying Koszul homology yields the \emph{truncation exact sequence}
    \begin{equation}\label{eq:modAugTruncation}
        \cdots \to \bigoplus_{H \in \mathcal{H}(\U_{r,n})} H_{\bullet}(K(\mathbb{Q}[\overline{\St}(\rho_{H})]))
        \xrightarrow{\oplus \cdot x_{H}} H_{\bullet}(K(\mathbb{Q}[\widetilde{\Sigma}_{\U_{r,n}}]))
        \to H_{\bullet}(K(\mathbb{Q}[\widetilde{\Sigma}_{\U_{r-1,n}}])) \to \cdots.
    \end{equation} 
    We write the connecting homomorphism as
    \[ 
        \delta_{i} \colon H_{i}(K(\mathbb{Q}[\widetilde{\Sigma}_{\U_{r-1,n}}])) \to
        \bigoplus_{H \in \mathcal{H}(\U_{r,n})} H_{i-1}(K(\mathbb{Q}[\overline{\St}(\rho_{H})]))
    \]
    and the direct sum of Gysin morphisms as 
    \[ 
        \gamma_{i} \colon \bigoplus_{H \in \mathcal{H}(\U_{r,n})} H_{i}(K(\mathbb{Q}[\overline{\St}(\rho_{H})]))
        \xrightarrow{\oplus \cdot x_{H}} H_{i}(K(\mathbb{Q}[\widetilde{\Sigma}_{\U_{r,n}}])).
    \]

    \begin{lemma}\label{lem:connectingHom}
        For $ i \geq 1 $,  $ \gamma_{i} $ is surjective. Equivalently, 
        the connecting homomorphism $ \delta_{i} $ is injective for $ i \geq 1 $. 
    \end{lemma}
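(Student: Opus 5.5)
The plan mirrors the proof of Lemma~\ref{lem:augconnectingHom}. First, the equivalence of the two statements is formal. By exactness of the truncation sequence \eqref{eq:modAugTruncation}, the image of $\gamma_i$ equals the kernel of the restriction map
\[
H_i(K(\mathbb{Q}[\widetilde{\Sigma}_{\U_{r,n}}]))\to H_i(K(\mathbb{Q}[\widetilde{\Sigma}_{\U_{r-1,n}}])).
\]
Hence $\gamma_i$ is surjective exactly when this restriction is zero. By exactness at the next spot, this happens exactly when the connecting map $\delta_i$ is injective.

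For surjectivity of $\gamma_i$, I will use the basis of Theorem~\ref{thm:modAugBasis}. The basis consists of $\{1\}$ together with the elements $x_F\cdot x_{\mathcal F}\otimes\xi$, where $F$ is a proper flat and $x_{\mathcal F}\otimes \xi$ runs over the retral basis of $H^\bullet(X_{\underline{\Sigma}_{\U_{r,n}/F}})$ from Theorem~\ref{thm:basis-uniform}. In homological degree $i\geq 1$, only the elements with nonzero wedge part $\xi$ survive, so $1$ is excluded. By Theorem~\ref{thm:basis-uniform}, such elements come from weakly retral flags $\mathcal F$ in $\mathcal L(\U_{r,n}/F)$, and the last flat of such a flag is a hyperplane of $\U_{r,n}/F$. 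Identifying $\mathcal L(\U_{r,n}/F)$ with $[F,\hat 1]$, that last flat is a hyperplane $H\in\mathcal H(\U_{r,n})$.

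There is one edge case: $F$ itself may be a hyperplane. Then $\U_{r,n}/F$ has rank $1$, and the flag is just $\{\varnothing\}$ in the contraction, whose top element corresponds to $F=H$. In every case, each basis element of positive homological degree is divisible by $x_H$ for some hyperplane $H$. Concretely, it has the form $x_H\cdot m\otimes\xi$, where $m$ is a product of $x_G$ for flats $G\le H$, all lying in $\overline{\St}(\rho_H)$.

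Next I will check that $m\otimes\xi$ defines a cycle in $K(\mathbb{Q}[\overline{\St}(\rho_H)])$, so that $x_H\cdot(m\otimes\xi)$ is its image under the Gysin map $\cdot x_H$. This is the main obstacle, since it requires a cycle condition inside the star rather than in the whole fan. The admissible wedges from Definition~\ref{def:admissible} involve only $\ell_u-\ell_v$ with $u,v\notin H$. Their differentials in $\mathbb{Q}[\overline{\St}(\rho_H)]$ involve only the rays $\rho_u$ and $\rho_v$ (which are not in the star, since $u,v\notin H$) and the rays $\rho_G$ with $u\in G$ or $v\in G$. The coefficients $\langle \ell_u-\ell_v, u_\rho\rangle$ of rays $\rho_G$ with $G\supseteq\{u,v\}$ or $G\cap\{u,v\}=\varnothing$ cancel. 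The remaining terms are killed by multiplication with $m$ and $x_H$, exactly as in the cycle verification for the retral basis in \cite{binderSingularCohomologyUniform}. I expect this to transfer with essentially no change, because the relevant relations in $\overline{\St}(\rho_H)$ are the same ones that make $x_{\mathcal F}\otimes\xi$ a cycle.

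Alternatively, one can avoid the cycle check by tracing the inductive construction of the basis in Theorem~\ref{thm:modAugBasis}: each element is a push-forward along $\cdot x_F$ of a class on a star, and the push-forward factors through $\overline{\St}(\rho_H)$ via functoriality of Gysin maps. Either way, every basis element in degree $i\ge1$ lies in the image of $\gamma_i$, so $\gamma_i$ is surjective.
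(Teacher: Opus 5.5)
Your proposal is correct and follows the paper's proof: you get the equivalence from exactness of the truncation sequence, and you use the basis of Theorem~\ref{thm:modAugBasis} to see that every basis element in homological degree $i\geq 1$ is divisible by some $x_H$, so it lies in the image of the Gysin map. The cycle check you call the main obstacle, which the paper leaves implicit, is immediate: every ray of $\overline{\St}(\rho_H)$ is $\rho_j$ with $j\in H$ or $\rho_G$ with $G\subseteq H$, so $d(\ell_u-\ell_v)=0$ in $K(\mathbb{Q}[\overline{\St}(\rho_H)])$ whenever $u,v\notin H$.
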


    \begin{proof}
        The equivalence of the two statements follows from exactness of the truncation exact sequence. 
        To see that $ \gamma_{i} $ is surjective, we use the basis from Theorem \ref{thm:modAugBasis}.
        For $ i \geq 1 $, each basis element is divisible by $ x_{H} $ for some $ H \in \mathcal{H}(\U_{r,n}) $
        and therefore in the image of 
        \[
            H_{i}(K(\mathbb{Q}[\overline{\St}(\rho_{H})]))
            \xrightarrow{\cdot x_{H}} H_{i}(K(\mathbb{Q}[\widetilde{\Sigma}_{\U_{r,n}}])).\qedhere
        \]
    \end{proof}
   
    \begin{proof}[Proof of Theorem \ref{thm:lefschetzModAug}]
       If $ q = p $ or $ r = n $, then the statement holds from the Hard Lefschetz property of the augmented Chow ring
       or the stellahedral variety, respectively. Therefore, assume that $ q > p $ and $ r < n $.

       To prove injectivity, lift $\ell$ to an element $ \hat{\ell} \in \Gr_{2}^{W}H^{2}(X_{\widetilde{\Sigma}_{\U_{r+1,n}}}) $,
       and for each $ H \in \mathcal{H}(\U_{r+1,n}) $, restrict $ \hat{\ell} $ to an element
       $ \hat{\ell}_{H} \in \Gr_{2}^{W}H^{2}(X_{\overline{\St}(\rho_{H})}) $. By genericity, we may assume each $ \hat{\ell}_{H} $
       is also generic. Consider the commutative square induced by applying $ \times \ell^{d} $ to the truncation exact sequence
       \begin{center}
           \begin{tikzcd}
                \Gr_{q}^{W}H^{p}(X_{\widetilde{\Sigma}_{\U_{r,n}}}) 
                \dar["\times \ell^{d}"] \rar["\delta_{q-p}"]
                & \displaystyle\bigoplus_{H \in \mathcal{H}(\U_{r+1,n})} \Gr_{q-2}^{W} H^{p-1}(X_{\overline{\St}(\rho_{H})}) \dar["\times
                \hat{\ell}^{d}_{H}"] \\
                 \Gr_{q +2d}^{W}H^{p+2d}(X_{\widetilde{\Sigma}_{\U_{r,n}}}) \rar["\delta_{q-p}"]
                & \displaystyle\bigoplus_{H \in \mathcal{H}(\U_{r+1,n})} \Gr_{q+2d-2}^{W} H^{p+2d-1}(X_{\overline{\St}(\rho_{H})}).
           \end{tikzcd}
       \end{center}
       By Lemma \ref{lem:connectingHom}, the horizontal arrows are injective. By
       Lemma \ref{lem:aughardLefschetzStar}, the vertical arrow on the right is injective for 
       $ 1 \leq d \leq r + q - 2p  $, as the closed star of $ \rho_{H} $ in
       $ \Sigma_{\U_{r+1,n}} $ is isomorphic to the closed star of $ \rho_{H} $ in $ \widetilde{\Sigma}_{\U_{r+1,n}} $. 
       This forces the vertical arrow on the left to be injective for
       $ 1 \leq d \leq r + q -2p $ as well.

       To prove surjectivity, we use the commutative square
       \begin{center}
            \begin{tikzcd}
                \displaystyle\bigoplus_{H \in \mathcal{H}(\U_{r,n})}\Gr_{q-2}^{W}H^{p-2}(X_{\overline{\St}(\rho_{H})}) 
                \rar["\gamma_{q-p}"] \dar["\times \ell_{H}^{d}"] &
                \Gr_{q}^{W}H^{p}(X_{\widetilde{\Sigma}_{\U_{r,n}}}) \dar["\times \ell^{d}"] \\
                \displaystyle\bigoplus_{H \in \mathcal{H}(\U_{r,n})}\Gr_{q+2d-2}^{W}H^{p+2d-2}(X_{\overline{\St}(\rho_{H})}) 
                \rar["\gamma_{q-p}"] &
                \Gr_{q+2d}^{W}H^{p+2d}(X_{\widetilde{\Sigma}_{\U_{r,n}}}).
            \end{tikzcd}       
       \end{center}
       The horizontal arrows are surjective by Lemma \ref{lem:connectingHom}. 
       By Lemma \ref{lem:aughardLefschetzStar}, the vertical arrow on the left is surjective for
       $ d > r+q-2p-1 $. This forces the vertical arrow on the right to be surjective for 
       $ d > r+q-2p-1 $ as well.
    \end{proof}

\section{Combinatorics of weakly retral flags}\label{sec:combinatorics-weakly-retral}
We now study the polynomials arising from these three cohomology rings. The singular cohomology ring and the corresponding Koszul homology groups carry a bigrading corresponding to the associated graded pieces of the weight filtration
\[
\Gr_{2j}^W H^{2j-i} (X_{\underline{\Sigma}_{\U_{r,n}}}) \cong H_i (K(\mathbb Q [\underline{\Sigma}_{\U_{r,n}}]))_j.
\]
Their dimensions are encoded by the \emph{refined Hodge--Poincaré polynomials},
\[
\underline{\H}^i_{\U_{r,n}}(x) = \sum_{j}\dim_{\mathbb Q} H_i (K(\mathbb Q [\underline{\Sigma}_{\U_{r,n}}]))_j \cdot x^{j-i},
\]
where \(0 \leq i \leq n-r\). Analogously, we consider the \emph{(modified) augmented refined Hodge--Poincaré polynomials},
\[
{\H}^i_{\U_{r,n}}(x) = \sum_{j}\dim_{\mathbb Q} H_i (K(\mathbb Q [{\Sigma}_{\U_{r,n}}]))_j \cdot x^{j-i} \quad \text{and} \quad \widetilde{\H}^i_{\U_{r,n}}(x) = \sum_{j}\dim_{\mathbb Q} H_i (K(\mathbb Q [\widetilde{\Sigma}_{\U_{r,n}}]))_j \cdot x^{j-i}.
\]
Formulas for \(\underline{\H}^i_{\U_{r,n}}(x)\) were presented and discussed in \cite{binderSingularCohomologyUniform}.
We now describe a new construction for pairs of weakly retral flags and admissible wedges, which leads to a new formula for the refined Hodge--Poincar\'{e} polynomials (Lemma~\ref{lem:hpDerangement}). This will be used in \cref{sec:counts-augmented-basis} to provide a formula for the augmented refined Hodge--Poincar\'{e} polynomials
and in \cref{sec:realRooted} to deduce our real-rootedness results. 

Let $\mathfrak S_{n}$ denote the symmetric group on $n$ letters, and let $\mathfrak D_{n}\subseteq\mathfrak S_{n}$ denote the set of derangements. 
For $\omega\in\mathfrak S_{n}$, let $\exc(\omega) = \bigl|\{k\mid\omega(k)>k\}\bigr|$ be the number of excedances of $\omega$.
The Eulerian and derangement polynomials are 
\[ 
A_n(x) = \sum_{\omega \in \mathfrak S_{n}}x^{\exc(\omega)} \quad
\textrm{and} \quad
d_{n}(x) = \sum_{\omega\in\mathfrak D_{n}}x^{\exc(\omega)}. 
\]
By convention, we define $ d_{0}(x) = 1 $. The Eulerian and derangement polynomials are related by the following identity
\begin{equation}\label{eq:eulerian-derangement-identity}
    A_n(x) = \sum_{k=0}^{n}\binom{n}{k}d_k(x).
\end{equation}

    \subsection{Counts of weakly retral flags and admissible wedges}
        We now give a method for counting and constructing the following class of rank-selected 
        weakly retral flags and admissible wedges.

        \begin{definition}
            For $ 0 \leq j \leq r-1 $ and $ 1 \leq i \leq n-r $, let 
            \[ 
                W_{\{ 0, j, j+1, \dots, r-1 \}, i}(\U_{r,n}) = 
                \left\{ (\mathcal{F}, \xi) : \; \begin{aligned}
                    &\textrm{$\mathcal{F}$ is weakly retral,
                    $ \xi \in \bigwedge^{i} N_{\mathbb{Q}}^{\vee} $ admissible to 
                    $ \mathcal{F} $,} \\ 
                    &\textrm{and $ \rks(\mathcal{F}) = \left\{ 0, j, j+1, \dots, r-1 \right\} $} \\
                \end{aligned} 
                \right\}.
            \]
            For ease of notation, we will write this as $ W_{j,i}(\U_{r,n}) $.
            Note that $ W_{0,i}(\U_{r,n}) = W_{1,i}(\U_{r,n}) $.
        \end{definition}  

        Along with the derangement polynomials $ d_{j}(x) $, the cardinalities of $ W_{j,i}(\U_{r,n}) $ 
        provide formulas for the refined Hodge--Poincar\'{e} polynomials.

        \begin{lemma}\label{lem:hpDerangement}
            For $ i \geq 1 $,
            \[ 
                \underline{\H}^{i}_{\U_{r,n}}(x) = \sum_{j=0}^{r-1} \left| W_{j,i}(\U_{r,n}) \right| \, d_{j}(x) \cdot x^{r-j-1}.
            \]
        \end{lemma}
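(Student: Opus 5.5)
By Theorem~\ref{thm:basis-uniform}, for $i\geq 1$ the space $H_i(K(\mathbb Q[\underline{\Sigma}_{\U_{r,n}}]))$ has a basis consisting of the elements $x_{\mathcal F}\otimes\xi$, where $\mathcal F$ is weakly retral and $\xi\in\bigwedge^i N^\vee_{\mathbb Q}$ is admissible to $\mathcal F$. The element $x_{\mathcal F}\otimes\xi$ lies in bidegree $(i,j)$ with $j=i+|\mathcal F\setminus\{\varnothing\}|$. It therefore contributes $x^{|\mathcal F|-1}$ to $\underline{\H}^i_{\U_{r,n}}(x)$, where $|\mathcal F|$ counts $\varnothing$. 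The plan is to group the pairs $(\mathcal F,\xi)$ according to a \emph{top segment} of the flag, and to show that each group is a product of a pair in some $W_{j,i}(\U_{r,n})$ with a retral-type flag on a Boolean interval, the latter being counted by $d_j(x)$.

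\textbf{Decomposition.} Given a weakly retral $\mathcal F=\{\varnothing=F_0<\dots<F_s\}$ with $F_s$ a hyperplane, every nonempty proper flat of $\U_{r,n}$ has size at most $r-1$, so each $F_k$ is simply a subset. I would let $G$ be the smallest flat of $\mathcal F$ such that $\mathcal F$ contains a flat of every rank between $\rk G$ and $r-1$; write $j=\rk G$, or $j=0$ if $G=\varnothing$. Then $\mathcal F=\mathcal F'\cup\mathcal F''$, where:
\begin{itemize}[label={}]
\item $\mathcal F''$ is the saturated chain from $G$ up to the hyperplane, with $\rks=\{j,\dots,r-1\}$;
\item $\mathcal F'$ is a chain in $[\varnothing,G]\cong$ the Boolean lattice $B_j$ that does not contain the coatom of $G$, or else misses some rank immediately below $j$. (This boundary condition must be fixed carefully.)
\end{itemize}
The cover condition for $F_k\subset F_{k+1}$ with $k<$ the index of $G$ involves only $F_{k+1}\subseteq G$, except at the junction. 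Likewise, admissibility of $\xi$ depends only on $F_s$ and $F_{s-1}$.

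\textbf{Counting each factor.} I will aim to show that the data $(\mathcal F''\cup\{\varnothing\},\xi)$, with junction conditions rewritten, is in bijection with $W_{j,i}(\U_{r,n})$. On the Boolean side, the chains $\mathcal F'$ with the required cover and boundary conditions on $B_j$ should be weighted by $x^{|\mathcal F'|-1}$. The elements of $\mathcal F''$ contribute $r-j$ flags, which together with the normalization produce the factor $x^{r-j-1}$. The Boolean-side count must therefore equal $d_j(x)$. This is the key point: retral flags in $B_j$ (the Chow ring of $\U_{j,j}$, i.e.\ the Boolean matroid) are counted by $A_j(x)$, or its Chow analogue. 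Meanwhile $d_j(x)$ should be recovered as the chains that avoid a maximal saturated tail, because the saturated tail has been absorbed into $\mathcal F''$.

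I would verify the identity $\sum_{\text{such }\mathcal F'} x^{|\mathcal F'|-1}=d_j(x)$ in two ways. First, by checking a recursion: removing the top saturated tail of a retral flag of $B_m$ gives the decomposition $\underline{\H}_{\U_{m,m}}(x)=\sum_k\binom{m}{k}(\ldots)$, which matches the form of \eqref{eq:eulerian-derangement-identity}. Second, through the known fact $\underline{\H}_{\U_{j,j}}=$ the Boolean Chow polynomial, using Möbius/binomial inversion. Alternatively, I could build a direct bijection to derangements via excedances.

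\textbf{Main obstacle.} The junction between the two parts is the delicate step. The cover condition at $G$ is $F_{k}\setminus F_{k-1}=\max(F_{k+1}\setminus F_{k-1})$, and it couples the last cover inside $[\varnothing,G]$ with the first element of $\mathcal F''$. In addition, the case $j=r-1$ makes $F_{s-1}$ lie on the Boolean side, which interacts with admissibility condition (3).

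To handle this, I would try to choose the split so that the coupling condition is absorbed into $W_{j,i}$: in $W_{j,i}$ the flag is $\{\varnothing\}\cup\mathcal F''$, and the rank-$j$ flat $G$ covers nothing in it unless $j=1$. This explains the remark $W_{0,i}=W_{1,i}$. The remaining inside-$G$ condition would then depend only on the relative order within $G$, so that it can be standardized to $B_j$ on $[j]$. I expect that getting this standardization exactly right, so that the Boolean factor is \emph{exactly} $d_j$ with $d_0=1$, is where most of the work lies.
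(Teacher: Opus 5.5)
Your plan is sound, and it takes a genuinely different route from the paper. The paper does no decomposition here. It quotes from \cite[Proposition~4.8 and Theorem~4.15]{binderSingularCohomologyUniform} the closed binomial-sum formula for $|W_{j,i}(\U_{r,n})|$, together with the expansion of $\underline{\H}^i_{\U_{r,n}}(x)$ in the polynomials $d_j(x)x^{r-j-1}$ having exactly those sums as coefficients, and observes that the two match. You instead derive the identity directly from Theorem~\ref{thm:basis-uniform}, by cutting each weakly retral flag at the bottom $G$ of its maximal saturated top segment. Your argument is longer but self-contained. It explains structurally why derangement polynomials appear and why $j=1$ drops out. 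Combined with Lemma~\ref{lem:count-retral}, it would also make the whole computation independent of the closed forms in \cite{binderSingularCohomologyUniform}. The paper's proof buys brevity by relying on them.

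There are two corrections to your plan.

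First, the junction you call the main obstacle is not one. By minimality of $G$, the flat $G$ does not cover its predecessor in $\mathcal F$, so no cover condition is imposed at $G$. The condition at the flat just below $G$ involves only flats of $[\varnothing,G]$, and the condition at the flat just above $G$ involves only flats of $\mathcal F''$. Likewise, for $j=r-1\geq 2$, condition~(3) of Definition~\ref{def:admissible} is vacuous both in $\mathcal F$ and in $\{\varnothing,H\}$. Hence $(\mathcal F,\xi)\mapsto\bigl((\{\varnothing\}\cup\mathcal F'',\xi),\mathcal F'\bigr)$ is directly a bijection onto $\bigsqcup_{j\neq 1}W_{j,i}(\U_{r,n})\times\mathcal C_j$. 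Here, for $j\geq 2$, $\mathcal C_j$ is the set of retral flags of the Boolean matroid on $G$ (relabelled order-preservingly to $[j]$) that contain no flat of size $j-1$, and $\mathcal C_0$ is a single trivial element. Set $c_j(x)=\sum_{\mathcal F'\in\mathcal C_j}x^{|\mathcal F'|}$, counting $\varnothing$, and $c_0=1$. The degree bookkeeping then gives $\underline{\H}^i_{\U_{r,n}}(x)=\sum_j|W_{j,i}(\U_{r,n})|\,c_j(x)\,x^{r-j-1}$.

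Second, the real content is $c_j=d_j$, which you only sketch. Your recursion does prove it, but it does not literally match \eqref{eq:eulerian-derangement-identity}.
\begin{itemize}
\item Apply the same cut to all retral flags of $\U_{m,m}$. These are enumerated by $A_m(x)$, by Theorem~\ref{thm:basis-uniform} and the fact that the Boolean Chow polynomial is Eulerian.
\item The segment above a $k$-set is unique, by the decreasing-order argument of Lemma~\ref{lem:decreasingWeaklyRetral}. This yields $A_m(x)=\sum_{k=0}^{m}\binom{m}{k}c_k(x)\,x^{m-k-1}$ with $c_1=0$.
\item Apply $\mathcal I_{m-1}$ to \eqref{eq:eulerian-derangement-identity}, using the palindromicity of $A_m$ and of each $d_k$, to obtain $A_m(x)=\sum_k\binom{m}{k}d_k(x)\,x^{m-k-1}$.
\item Induct on $m$: the $k=m$ term is $c_m(x)/x$, so comparing the two expansions gives $c_m=d_m$.
\end{itemize}
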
   
        \begin{proof}
            This follows from \cite[Proposition 4.8 and Theorem 4.15]{binderSingularCohomologyUniform},
            which say that 
            \[ 
                \left| W_{j,i}(\U_{r,n}) \right| = 
                \sum_{\ell=1}^{n-r-i+1} \, \binom{n}{j} \binom{n-j-\ell}{r-j-1} \binom{n-r-\ell}{i-1},
            \]
            and
            \[ 
                \underline{\H}_{\U_{r,n}}^{i}(x) = \sum_{j=0}^{r-1}\sum_{\ell=1}^{n-r-i+1} 
                \, \binom{n}{j} \binom{n-j-\ell}{r-j-1} \binom{n-r-\ell}{i-1}
                \, d_{j}(x) \cdot x^{r-j-1}. \qedhere
            \]
        \end{proof} 
        \noindent We note that, as $ d_{1}(x)=  0 $, $ \left| W_{1,i}(\U_{r,n}) \right| $ plays no role in the formula.
        
        We now describe a method of counting and constructing
        elements of $ W_{j,i}(\U_{r,n}) $. We begin by constructing weakly retral flags $ \mathcal{F} $ with
        $ \rks(\mathcal{F}) = \left\{0, j, \dots, r-1\right\} $.
        
        \begin{lemma}\label{lem:decreasingWeaklyRetral}
            Fix a uniform matroid $ \U_{r,n} $, and let $ 0 \leq j \leq r-1 $ with
            $j \neq 1 $.
            Given a hyperplane $H$ and a flat $ G \leq H $ of rank $j$, there is a unique weakly retral flag
            $ \mathcal{F} $ which contains $G$ and $H$ and has $ \rks(\mathcal{F}) = \left\{0, j, \dots, r-1 \right\} $.

            Moreover, if $H$ covers the flat $F$ in $ \mathcal{F} $, then $ H \setminus F = \min (H \setminus G) $.
        \end{lemma}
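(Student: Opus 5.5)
The plan is to translate weak retrality into a condition on the order in which the elements of $H\setminus G$ are added along the chain. Then I will show that this condition forces the elements to be added in decreasing order.

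\emph{Shape of the flag.} Let $\mathcal F = \{\varnothing = F_0 < F_1 < \cdots < F_s\}$ be weakly retral with $\rks(\mathcal F) = \{0,j,j+1,\dots,r-1\}$ and containing $G$ and $H$. In $\U_{r,n}$ every proper flat has cardinality equal to its rank. So the flats of $\mathcal F$ of rank $\ge j$ form a saturated chain
\[
G = F_a < F_{a+1} < \cdots < F_s = H ,
\]
with $a=1$ if $j\ge 2$ and $a=0$ if $j=0$. Each step adds one element $e_k$, where $F_k\setminus F_{k-1}=\{e_k\}$ for $a<k\le s$, and $H\setminus G=\{e_{a+1},\dots,e_s\}$. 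Conversely, any ordering of $H\setminus G$ gives such a chain, and every subset of cardinality $<r$ is a flat. Hence candidate flags correspond bijectively to orderings $(e_{a+1},\dots,e_s)$ of $H\setminus G$. This is the step where $j\neq 1$ is used: when $j\ge 2$, $F_1=G$ does not cover $F_0=\varnothing$, so the first cover condition imposes no constraint on $G$. When $j=0$, we have $G=\varnothing$ and $F_1$ covers $F_0$, but that cover is part of the chain above.

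\emph{Translating the cover condition.} Condition (2) in the definition of weakly retral applies to each $i<s$ such that $F_i$ covers $F_{i-1}$; these are exactly the indices $a<i<s$. For such $i$ we have $F_{i+1}\setminus F_{i-1}=\{e_i,e_{i+1}\}$, so the condition reads $e_i=\max\{e_i,e_{i+1}\}$, that is, $e_i>e_{i+1}$. The index $i=s$ is exempt, because the hyperplane is excluded from the condition, so $e_s$ is compared with nothing beyond $e_{s-1}$. Thus $\mathcal F$ is weakly retral if and only if
\[
e_{a+1}>e_{a+2}>\cdots>e_s .
\]
There is exactly one such ordering, namely $H\setminus G$ listed in decreasing order. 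This gives existence and uniqueness.

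\emph{The ``moreover'' statement.} If $H$ covers $F=F_{s-1}$ in $\mathcal F$, then $H\setminus F=\{e_s\}$. By the decreasing order this is the smallest element, so $e_s=\min(H\setminus G)$.

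The only point needing care, and the one I expect to be the main obstacle, is the bookkeeping of the index ranges. One must check that the cover condition never involves $G$ itself (when $j\ge2$), or involves it only harmlessly (when $j=0$). One must also check the degenerate case $j=r-1$, where $G=H$, the chain is trivial, and the ``moreover'' clause is vacuous or immediate.
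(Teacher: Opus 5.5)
Your proof is correct and follows the paper's route. Both arguments identify candidate flags with orderings of $H\setminus G$ and show that the cover condition at each non-hyperplane step is exactly $e_i>e_{i+1}$, with $j\neq 1$ leaving $G$ unconstrained and the hyperplane step exempt. So the unique weakly retral flag adds $H\setminus G$ in decreasing order, and its last element is $\min(H\setminus G)$. Your uniform handling of $j=0$ through the index $a$ is slightly cleaner than the paper's case split.
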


        \begin{proof}
            If \(r=1\) there is nothing to prove. 
            When \(r\geq 2\), write $ H \setminus G = \left\{ x_{1} > \dots > x_{r-j-1} \right\} $ and consider
            \[ \mathcal{F} = \left\{ \varnothing < G < G \cup \{x_{1}\} < G \cup \{ x_{1}, x_{2} \} <  \cdots < 
            G \cup \{ x_{1}, \dots, x_{r-j-2}\} < H \right\}. \]
            If $j=0$, then \(G=\varnothing\), which we do not list twice in the flag. Similarly, for $j=r-1$ and \(G = H\). So we can consider \(1<j<r-1\).
            We claim that \(\mathcal{F}\) is the unique weakly retral chain which contains $G$ and $H$ and has 
            $ \rks(\mathcal{F}) = \left\{0, j, \dots, r-1 \right\} $. Indeed, the cover condition 
            for $ G \cup \{x_{1}, \dots, x_{k-1} \} < G \cup \{ x_{1}, \dots, x_{k}\} $ is equivalent
            to the condition that $ x_{k} > x_{k+1} $.
            Therefore, a flag of flats 
            \[ 
                \mathcal{F}' = \left\{ \varnothing < G < G \cup \{x'_{1}\} <  \cdots < 
            G \cup \{ x'_{1}, \dots, x'_{r-j-2}\} < G \cup \{ x'_{1}, \dots, x'_{r-j-1}\} = H \right\} 
            \]
            with $ \rks(\mathcal{F}') = \left\{0,j,\dots,r-1\right\} $ is weakly retral if and only if
            $ H \setminus G = \left\{ x'_{1} > \cdots > x'_{r-j-1} \right\} $. (Note that the assumption 
            $ j > 1 $ implies $ G $ does not cover $\varnothing$, so there is no restriction on $ G $. Similarly,
            weakly retral flags do not impose any conditions on the hyperplane $H$.) 
            
            The last statement follows immediately.
        \end{proof}
        We now extend this to a construction of pairs $ (\mathcal{F}, \xi) \in W_{j,i}(\U_{r,n}) $.
        Given an ordered set $ A = \left\{a_{1} < \dots < a_{n} \right\} $, 
        we let $ \min_{i} A = \left\{ a_{i} \right\} $ and $ \min_{i}^{j} A = \left\{ a_{i} < \dots < a_{j} \right\} $.
        For a set $S$ and integer $n$, we let $ \binom{S}{n}$ be the set of subsets of $ S $ with cardinality $n$.

\begin{lemma}\label{lem:count-retral}
    Let $ 0 \leq j \leq r-1 $, $j\neq 1$, and $ 1 \leq i \leq n-r $. Elements of $ W_{j,i}(\U_{r,n}) $ 
    are in bijection with the following data:
    \begin{enumerate}
        \item an integer $ 0 \leq k \leq j $,
        \item an integer $ 0 \leq m \leq r-k-1 $,
        \item a subset $ A \in \binom{[n-m-1]}{r-m+i-1} $,
        \item a subset $ B \in \binom{A \setminus \min_{1}^{k+1} A}{i-1} $, and 
        \item a subset $ C \in \binom{(A \cup \{n-m+1, \dots, n\}) \setminus (B \cup \min_{1}^{k+1} A)}{j-k} $.
    \end{enumerate}
    Consequently,
    \[ 
        \left| W_{j,i}(\U_{r,n}) \right| = \sum_{k=0}^{j} \sum_{m=0}^{r-k-1} \binom{n-m-1}{r-m+i-1}
        \binom{r-m+i-1-(k+1)}{i-1} \binom{r-k-1}{j-k}.
    \]
\end{lemma}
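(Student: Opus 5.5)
The plan is to reduce everything to a choice of a pair of flats plus a set of indices, and then re-encode that choice by the data (1)--(5).

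\textbf{Step 1: parametrize by \((G,H,U)\).} By Lemma~\ref{lem:decreasingWeaklyRetral}, a weakly retral flag \(\mathcal F\) with \(\rks(\mathcal F)=\{0,j,\dots,r-1\}\) is the same thing as a pair \(G\le H\) with \(\rk G=j\) and \(H\in\mathcal H(\U_{r,n})\). Moreover, the flat covered by \(H\) in \(\mathcal F\) is \(H\setminus\min(H\setminus G)\).

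By Definition~\ref{def:admissible}, an admissible wedge \(\xi\in\bigwedge^i N^\vee_{\mathbb Q}\) is the same as a set \(U=\{u_1<\dots<u_i\}\subseteq[n]\setminus H\) with two properties. First, every \(u_k\) has a successor in \([n]\setminus H\), i.e.\ \(U\) avoids \(\max([n]\setminus H)\). Second, \(u_1<\min(H\setminus G)\) whenever \(H\) covers a flat of \(\mathcal F\). When \(j=r-1\) we have \(H\setminus G=\varnothing\), and this second condition is vacuous; the assumption \(j\neq 1\) rules out the degenerate case where \(H=G\) covers \(\varnothing\). With the convention \(\min\varnothing=\infty\), the second condition therefore reads \(u_1<\min(H\setminus G)\) uniformly in all cases. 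So \(W_{j,i}(\U_{r,n})\) is in bijection with triples \((G,H,U)\) satisfying these constraints.

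\textbf{Step 2: the forward map \((G,H,U)\mapsto(k,m,A,B,C)\).}
\begin{itemize}
\item Let \(n-m=\max([n]\setminus H)\). Then \(\{n-m+1,\dots,n\}\subseteq H\), and \(H\cap[n-m-1]\) has \(r-1-m\) elements.
\item Put \(A=(H\cup U)\cap[n-m-1]=(H\cap[n-m-1])\sqcup U\). This is a subset of \([n-m-1]\) of size \(r-m+i-1\).
\item Let \(k\) be the number of elements of \(A\) smaller than \(u_1\), so that \(u_1=\min_{k+1}A\). The \(k\) elements of \(A\) below \(u_1\) lie in \(H\), and by the condition \(u_1<\min(H\setminus G)\) they lie in \(G\). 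Hence \(k\le j\). Also \(k+m\le |H|=r-1\), so \(m\le r-k-1\).
\item Set \(B=U\setminus\{u_1\}\subseteq A\setminus\min_1^{k+1}A\).
\item Set \(C=G\setminus\min_1^{k}A\). Since \(G\subseteq H=(A\setminus U)\cup\{n-m+1,\dots,n\}\), the set \(C\) lies in \((A\cup\{n-m+1,\dots,n\})\setminus(B\cup\min_1^{k+1}A)\) and has size \(j-k\).
\end{itemize}

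\textbf{Step 3: the inverse map.} Given \((k,m,A,B,C)\), define
\begin{align*}
U&=\{\min_{k+1}A\}\cup B,\\
H&=(A\setminus U)\cup\{n-m+1,\dots,n\},\\
G&=\min_1^kA\cup C.
\end{align*}
One then checks the following.
\begin{itemize}
\item \(|H|=r-1\).
\item \(n-m\notin H\) and \(n-m\) is the maximum of \([n]\setminus H\), so \(U\) avoids it.
\item \(G\subseteq H\) and \(|G|=j\).
\item Every element of \(H\setminus G\) exceeds \(\min_{k+1}A=u_1\). This holds because the elements of \(H\) below \(u_1\) are exactly \(\min_1^kA\subseteq G\).
\end{itemize}
The two maps are mutually inverse by construction. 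The one point that needs care is that \(k\) and \(m\) are recovered canonically from \((G,H,U)\): \(m\) from the maximal element of \([n]\setminus H\), and \(k\) from the position of \(u_1\) in \(A\).

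\textbf{Step 4: the count.} The ground set in (5) has size \((r-m+i-1)+m-(i-1)-(k+1)=r-k-1\). Multiplying the sizes of the choices in (3), (4), (5) and summing over \(k\) and \(m\) gives the stated formula for \(|W_{j,i}(\U_{r,n})|\).

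I expect the main obstacle to be the boundary cases \(j=0\) and \(j=r-1\), together with \(r=1\), in the verification that the admissibility condition (3) of Definition~\ref{def:admissible} matches exactly the constraint "the elements of \(A\) below \(u_1\) lie in \(G\)". For \(j=0\) this forces \(k=0\), and the convention \(\min\varnothing=\infty\) must be handled consistently in each of these cases.
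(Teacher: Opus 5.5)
Your proposal is correct and follows essentially the same route as the paper: you pass through the triple $(G,H,\{u_1,\dots,u_i\})$, define the same forward map to $(k,m,A,B,C)$, and build the same inverse. The one cosmetic difference is that you define $k$ directly as the position of $u_1$ in $A$, whereas the paper takes $k$ maximal with $\min_{1}^{k}(H\cup\{u_1,\dots,u_i\})\subseteq G$ and then proves $u_1=\min_{k+1}A$; the two definitions agree, and your boundary-case worry ($j=0$, $j=r-1$, $r=1$) is already settled by the uniform reformulation $u_1<\min(H\setminus G)$ in your Step 1.
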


\begin{proof}
   Let $ D $ denote the set of data $ (k,m,A,B,C) $ satisfying conditions (1)--(5). We construct a map
   $ \phi\colon W_{j,i}(\U_{r,n}) \to D $ which we show is a bijection.

   \vspace*{1em}
   \noindent\textbf{Definition of $ \phi $:}
   Fix $ (\mathcal{F}, \xi) \in W_{j,i}(\U_{r,n}) $. Let $G$ be the rank $ j$ flat of 
   $ \mathcal{F} $, let $ H $ be the hyperplane of $ \mathcal{F} $, and let $ \xi = (\ell_{u_{1}} -
   \ell_{v_{1}}) \wedge \cdots \wedge (\ell_{u_{i}} - \ell_{v_{i}}) $ as in 
   Definition \ref{def:admissible}. We then define $ \phi(\mathcal{F}, \xi) =
   (k,m,A,B,C) $, where 
   \begin{enumerate}
       \item $k$ is the largest integer such that $ G $ contains 
            $\min_{1}^{k} (H \cup \left\{u_{1}, \dots, u_{i} \right\}) $,
       \item $m$ is the largest integer such that $H$ contains $ \left\{n-m+1, \dots, n \right\} $,
       \item $A = \left( H \cup \left\{u_{1}, \dots, u_{i} \right\} \right) \setminus \left\{n-m+1, \dots, n\right\}$,
       \item $B = \left\{ u_{2}, \dots, u_{i} \right\}$, and
       \item $C = G \setminus \min_{1}^{k} G $.
   \end{enumerate}
   We now verify that $ \phi(\mathcal{F}, \xi) \in D $. 
   \begin{enumerate}
       \item As $ \left| G \right| = j $, we find that $ 0\leq k \leq j $.
        \item Note that $ \min_{1}^{k}(H \cup \left\{ u_{1}, \dots, u_{i} \right\}) \cap
              \left\{n-m+1, \dots, n \right\} = \varnothing $, because 
              \[ \textstyle\min_{k} (H \cup \left\{u_{1}, \dots, u_{i} \right\}) < u_{1} < n-m+1. \] 
                As $ \min_{1}^{k}(H \cup \left\{ u_{1}, \dots, u_{i} \right\}) \cup
                \left\{ n-m+1, \dots,n \right\} \subseteq H $, we then find that $ k + m \leq r-1 $ 
                and $ 0 \leq m \leq r- k-1 $.
        \item It is clear that $ \left| A \right| = r-m+i-1 $. We must show that $ A \subseteq [n-m-1] $, and 
        for this it suffices to show that $n-m \notin (H \cup \left\{ u_{1}, \dots, u_{i} \right\}) $.
        By the maximality of $m$, $ n-m \notin H $. As $ u_{1} < \cdots < u_{i} < v_{i} < n-m+1 $, we see that 
        $ n-m \notin \left\{u_{1}, \dots, u_{i} \right\} $ as well.
        \item Clearly $ \left| B \right| = i-1 $. As $B$ is disjoint from $ H $,
        we find that $ B \subseteq A \setminus \min_{1}^{k} A $. To show that $ \min_{k+1} A \notin B $, we will
        show that $ u_{1} = \min_{k+1} A $. By the maximality of $k$, either $ \min_{k+1} A \in H \setminus G $ 
        or $ \min_{k+1} A = u_{1} $. If $ \min_{k+1} A \in H \setminus G $, then $ \min_{k+1} A = \min (H \setminus G) $.
        By Lemma \ref{lem:decreasingWeaklyRetral}, $ u_{1} < \min_{k+1} A $ which is impossible, as
        $ u_{1} \in A $ and $ \min_{k} A < u_{1} $ as well. Therefore $u_{1} = \min_{k+1} A $.
        \item It is clear that $ \left| C \right| = j-k $. It remains to see that $ C \subseteq 
        (H \cup \left\{u_{1}, \dots, u_{i} \right\}) \setminus (B \cup \min_{1}^{k+1} A) $. This follows
        as $ G \subseteq H $, and $ B \cup \min_{1}^{k+1} A = \min_{1}^{k} G \cup \left\{u_{1}, \dots, u_{i} \right\} $.
    \end{enumerate}
        \vspace*{1em}
        \noindent \textbf{Injectivity of $\phi$:}
        To see that $\phi$ is an injection, we show that we can recover $ G $ and $H$---the rank $j$ and rank $r-1$
        flats of $ \mathcal{F} $, respectively---and $ \left\{u_{1}, \dots, u_{i} \right\} $
        from $ \phi(\mathcal{F}, \xi) = (k,m,A,B,C) $. By
        Lemma \ref{lem:decreasingWeaklyRetral}, $ G$ and $H$ determine $ \mathcal{F} $, while $ \left\{u_{1},\dots,
        u_{i} \right\} $ determines $ \xi $ by the definition of admissible wedges. Above we showed that
        $ u_{1} = \min_{k+1} A $, so $ \left\{ u_{1}, \dots, u_{i} \right\} = B \cup \min_{k+1} A $. 
        Then $H = A \cup \left\{n-m+1,\dots, n\right\} \setminus (B \cup \min_{k+1} A) $, and
        $ G = C \cup \min_{1}^{k} A $.

        \vspace*{1em}
        \noindent \textbf{Surjectivity of $\phi$:} 
        Finally, we show that $\phi$ is surjective. Given $(k,m,A,B,C) \in D $, let
        $\left\{u_{1} < \dots < u_{i} \right\} = B \cup \min_{k+1} A $, $ H = (A \cup \left\{n-m+1,\dots,n\right\})
        \setminus \left\{u_{1}, \dots, u_{i} \right\} $, 
        and $ G = C \cup \min_{1}^{k} A $. By Lemma \ref{lem:decreasingWeaklyRetral}, there is a unique weakly retral
        flag $ \mathcal{F} $ containing $G $ and $H$ with $ \rks(\mathcal{F}) = \left\{0, j, \dots, r-1 \right\} $.
        Define $ \xi $ to be the basic wedge
        \[ 
            (\ell_{u_{1}} - \ell_{v_{1}}) \wedge \cdots \wedge (\ell_{u_{i}} - \ell_{v_{i}}) 
        \]
        with $ v_{k} $ the successor of $u_{k} $ in $ [n] \setminus  H$. We note that these successors exist because
        $ n-m \notin H $, and $ u_{i} < n-m $.

        We begin by showing that $ (\mathcal{F}, \xi) \in W_{j,i}(\U_{r,n}) $, and here it suffices to show that
        $ \xi $ is admissible. The only non-trivial condition to check is that, if $H$ covers the flat $F$,
        then $ u_{1} < H \setminus F $. From Lemma \ref{lem:decreasingWeaklyRetral},
        $ H \setminus F = \min (H \setminus G) $. As $ u_{1} = \min_{k+1} A $, and $ \min_{1}^{k}A \subseteq G $,
        we find that $ u_{1} < H \setminus F $ as desired.

        We conclude by verifying that $ \phi(\mathcal{F}, \xi) = (k,m,A,B,C) $. 
        For notation, we will write
        $ \phi(\mathcal{F}, \xi) = (k',m',A',B',C') $.
        \begin{enumerate}
            \item As $ \min_{1}^{k} A \subseteq G $ and $ \min_{k+1} A = u_{1} \notin G $, we see that 
            $k$ is the largest integer such that $ \min_{1}^{k} (H \cup \{u_{1}, \dots, u_{i}\}) \subseteq G $,
            which is the definition of $k'$.
            \item As $ A \subseteq [n-m-1] $, we find that $ n-m \notin H $, so $ m' \leq m $. On the other hand,
            $ \left\{n-m+1, \dots,n \right\} \subseteq H $, so $m' \geq m $. Thus $ m' = m $.
            \item By definition, $ A' = (H \cup \left\{u_{1}, \dots, u_{i}\right\}) \setminus
                \left\{ n-m+1, \dots, n \right\} $. As \[ H = (A \cup \left\{n-m+1, \dots,n \right\}) \setminus
                \left\{u_{1}, \dots, u_{i} \right\},\]  we see that $ A' = A $.
            \item By definition, $ B' = \left\{u_{2}, \dots, u_{i} \right\} $. On the other hand,
            $ B = \left\{u_{1}, \dots, u_{i} \right\} \setminus \min_{k+1}A $. As $ \min_{k+1} A = u_{1} $,
            $ B' = B $.
            \item By definition $ C' = G \setminus \min_{1}^{k}G $. As $ G = C \cup \min_{1}^{k} A $ and 
            $ \min_{1}^{k} G = \min_{1}^{k} A $, we see that $ C' = C $. \qedhere
        \end{enumerate}
\end{proof}

\subsection{Counts for the augmented retral basis}\label{sec:counts-augmented-basis}
We now consider the augmented refined Hodge--Poincar\'{e} polynomials \(\H^i_{\U_{r,n}}(x)\) and the modified augmented refined Hodge--Poincar\'{e} polynomials \(\widetilde{\H}^i_{\U_{r,n}}(x)\).
We observe that \(\H^0_{\U_{r,n}}(x)\) coincides with \(\widetilde{\H}^0_{\U_{r,n}}(x)\), and both are equal to the augmented Chow polynomial \(\H_{\U_{r,n}}(x)\). We can therefore focus on the case \(i\geq 1\). 
To determine the coefficients of these polynomials we count the number of elements in the augmented retral basis from Theorems \ref{thm:main-augmented-basis} and \ref{thm:modAugBasis}. Since the basis for the modified augmented cohomology ring is a subset of the unmodified one, for \(i\geq 1\) we can write
\begin{equation}\label{eq:augmented-hp}
\H^i_{\U_{r,n}}(x) = \widetilde{\H}^i_{\U_{r,n}}(x) + \mathsf{B}^i_{\U_{r,n}}(x),
\end{equation}
where \(\mathsf{B}^i_{\U_{r,n}}(x)\) enumerates the elements of \(\mathcal B^{\IN}_{\U_{r,n}}\).

\begin{example}
We continue with Example \ref{ex:U24-fan}. By inspecting the elements of Table \ref{tab:aug-basis-U24}, we can write
\begin{align*}
    \H^0_{\U_{2,4}}(x) &= \widetilde{\H}^0_{\U_{2,4}}(x) = 1 + 5x + x^2,&  &\\
    \H^1_{\U_{2,4}}(x) &= 8x + 9x^2,  & \widetilde{\H}^1_{\U_{2,4}}(x) &= 8x + 5x^2, \\
    \H^2_{\U_{2,4}}(x) &= 4x + 6x^2,  & \widetilde{\H}^2_{\U_{2,4}}(x) &= 4x + 3x^2.
\end{align*}
\end{example}

\begin{proposition}\label{prop:formula-B-IN}
    For \(i \geq 1\),
    \[
    \mathsf{B}^i_{\U_{r,n}}(x) = \binom{n}{r+i}\binom{r+i-1}{i-1}x^r.
    \]
\end{proposition}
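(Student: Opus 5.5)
The plan is to count $\mathcal B^{\IN}_{\U_{r,n}}$ directly by bidegree and then exhibit a bijection that matches the binomial product.

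\textbf{Bidegree.} An element $\prod_{j\in B}y_j\otimes \ell_J$ lies in $K_{|J|}(\mathbb Q[\Sigma_{\U_{r,n}}])_{|B|+|J|}$. Since $B$ is a basis, $|B|=r$. Hence such an element contributes to $H_i(K(\mathbb Q[\Sigma_{\U_{r,n}}]))_{j}$ with $i=|J|$ and $j=r+i$, and its monomial in $\H^i_{\U_{r,n}}(x)$ is $x^{j-i}=x^r$. By Theorem~\ref{thm:main-augmented-basis}, together with the decomposition \eqref{eq:augmented-hp}, it therefore suffices to show that
\[
N:=\#\{(B,J): B\in\binom{[n]}{r},\ J\in\binom{[n]\setminus B}{i},\ \min J<\min B\}
\]
equals $\binom{n}{r+i}\binom{r+i-1}{i-1}$. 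For $i\geq 1$ the set $J$ is automatically nonempty. When $r=0$ the condition on minima is vacuous by convention.

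\textbf{Bijection.} Send $(B,J)$ to the pair $(S,J')$ with $S=B\sqcup J\in\binom{[n]}{r+i}$ and $J'=J\setminus\{\min S\}$.
\begin{itemize}
\item The condition $\min J<\min B$ says exactly that $\min S\in J$.
\item So the pair $(B,J)$ is equivalent to choosing $S$ together with an $(i-1)$-subset $J'$ of $S\setminus\{\min S\}$.
\item The inverse sets $J=J'\cup\{\min S\}$ and $B=S\setminus J$.
\end{itemize}
This gives $\binom{n}{r+i}\binom{r+i-1}{i-1}$ choices. It also covers $r=0$: there $B=\varnothing$, $S=J$, and $\binom{i-1}{i-1}=1$, so the count is $\binom{n}{i}$, matching all nonempty $J$ of size $i$.

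\textbf{Main point of care.} The only delicate step is the bookkeeping of the grading convention, namely confirming that the exponent is $j-i=r$ rather than $j$. This follows from $K_i(\cdot)_j=\mathbb Q[\Sigma]_{j-i}\otimes\bigwedge^i$, since the $y$-monomial has degree $r$.
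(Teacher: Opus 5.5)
Your proof is correct and is essentially the paper's argument: choose $S=B\sqcup J$ of size $r+i$, observe that $\min J<\min B$ forces $\min S\in J$, and complete $J$ with $i-1$ elements of $S\setminus\{\min S\}$. Your explicit check that $y_B\otimes\ell_J$ lies in internal degree $j=r+i$, and so contributes $x^{j-i}=x^r$, supplies the grading step that the paper leaves implicit.
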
 
\begin{proof}
    We enumerate the elements of the basis by first choosing a set \(S\) of size \(r+i\) and then partitioning it as \(S = B\sqcup J\) with \(|B| = r\) and \(|J| = i\). By the definition of \(\mathcal B^{\IN}_{\U_{r,n}}\), we build \(J\) by including the minimal element of \(S\), and then completing it with \(i-1\) of the \(r+i-1\) elements in \(S\setminus \{\min S\}\).
\end{proof}
The next result provides a formula for the modified augmented Hodge--Poincaré polynomials. 
\begin{proposition}\label{prop:formula-Htilde}
    For \(i\geq 1\),
    \[
    \widetilde{\H}^i_{\U_{r,n}}(x) = \sum_{s=0}^{r-1} \binom{n}{s} \left[\sum_{\ell=1}^{n-r-i+1}\binom{n-s-\ell}{r-s-1}\binom{n-r-\ell}{i-1} \right] A_s(x) \cdot x^{r-s}.
    \]
\end{proposition}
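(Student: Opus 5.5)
By Theorem~\ref{thm:modAugBasis}, for \(i\geq 1\) the degree-\(i\) part of the basis is \(\bigsqcup_F \mathcal B^F_{\U_{r,n}}\) restricted to homological degree \(i\). The element \(1\) lies in degree \(0\), so it does not contribute. Each element \(x_F\cdot x_{\mathcal F}\otimes\xi\) of \(\mathcal B^F_{\U_{r,n}}\) with \(\xi\in\bigwedge^i\) and internal degree \(j\) corresponds to the basis element \(x_{\mathcal F}\otimes\xi\) of \(H_i(K(\mathbb Q[\underline{\Sigma}_{\U_{r,n}/F}]))_{j-1}\). Hence
\[
\widetilde{\H}^i_{\U_{r,n}}(x)=\sum_{F \text{ proper flat}} x\cdot \underline{\H}^i_{\U_{r,n}/F}(x).
\]
A proper flat \(F\) of rank \(t\) is a \(t\)-subset of \([n]\) with \(0\leq t\leq r-1\). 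Its contraction is \(\U_{r-t,n-t}\), after relabeling the ground set in an order-preserving way, which does not affect the retral combinatorics. So the right-hand side equals \(\sum_{t=0}^{r-1}\binom{n}{t}\, x\,\underline{\H}^i_{\U_{r-t,n-t}}(x)\).

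Next I substitute the formula from Lemma~\ref{lem:hpDerangement} in its explicit form:
\[
\underline{\H}^i_{\U_{r',n'}}(x)=\sum_{j=0}^{r'-1}\sum_{\ell=1}^{n'-r'-i+1}\binom{n'}{j}\binom{n'-j-\ell}{r'-j-1}\binom{n'-r'-\ell}{i-1}d_j(x)x^{r'-j-1},
\]
with \(r'=r-t\) and \(n'=n-t\). Then \(n'-r'=n-r\) and \(n'-j-\ell=n-(t+j)-\ell\), while \(r'-j-1=r-(t+j)-1\). Setting \(s=t+j\), the power of \(x\) becomes \(x\cdot x^{r-s-1}=x^{r-s}\). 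The bracketed binomials \(\binom{n-s-\ell}{r-s-1}\binom{n-r-\ell}{i-1}\) depend only on \(s\) and \(\ell\), and the range of \(\ell\) is independent of \(t\). The remaining factor is \(\binom{n}{t}\binom{n-t}{j}=\binom{n}{s}\binom{s}{t}=\binom{n}{s}\binom{s}{j}\).

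Summing over \(t+j=s\) with \(0\leq s\leq r-1\) leaves \(\sum_{j=0}^{s}\binom{s}{j}d_j(x)=A_s(x)\) by \eqref{eq:eulerian-derangement-identity}. This is exactly the claimed formula.

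The main point needing care is the first reduction. I must check that the push-forward in the proof of Theorem~\ref{thm:main-augmented-basis} shifts the internal degree by exactly one, since multiplying by \(x_F\) adds one to \(j\) and leaves \(i\) fixed. I must also check that \(\underline{\H}^i\) of the contraction is the right object, including the edge cases where \(i>n-r\) and the sums are empty. I also need to confirm that the convention that flags contain \(\varnothing\) matches \(F=\varnothing\) giving the bold elements, and that the case \(r-t=1\), where the contraction is \(\U_{1,n-t}\), is still covered by Lemma~\ref{lem:hpDerangement}. As a sanity check, the formula should reproduce \(\widetilde{\H}^1_{\U_{2,4}}=8x+5x^2\).
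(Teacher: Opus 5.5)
Your proposal is correct and is essentially the paper's proof. You reduce via the basis of Theorem~\ref{thm:modAugBasis} to $\widetilde{\H}^i_{\U_{r,n}}(x)=x\sum_{t=0}^{r-1}\binom{n}{t}\underline{\H}^i_{\U_{r-t,n-t}}(x)$, substitute the explicit formula behind Lemma~\ref{lem:hpDerangement}, reindex by $s=t+j$ using $\binom{n}{t}\binom{n-t}{j}=\binom{n}{s}\binom{s}{j}$, and collapse $\sum_{j}\binom{s}{j}d_j(x)=A_s(x)$ by \eqref{eq:eulerian-derangement-identity}; the side checks you flag (the degree shift by $x_F$, the case $r-t=1$, and the $\U_{2,4}$ example giving $8x+5x^2$) all go through.
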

\begin{proof}
    If \(i\geq 1\), we count the elements of our basis as follows. We start by fixing a rank \(0\leq k\leq r-1\), and for each of the \(\binom{n}{k}\) flats of rank \(k\) we count the elements of the non-augmented retral basis for the contraction \(\U_{r,n}/F \cong \U_{r-k,n-k}\). In total we get
    \[
    \widetilde{\H}^i_{\U_{r,n}}(x) = x\sum_{k=0}^{r-1}\binom{n}{k}\underline{\H}^i_{\U_{r-k,n-k}}(x).
    \]
    By Lemma \ref{lem:hpDerangement}, this can be expanded as
    \begin{align*}
   \widetilde{\H}^i_{\U_{r,n}}(x)
    &= x\sum_{k=0}^{r-1}\binom{n}{k}\sum_{j=0}^{r-k-1}\sum_{\ell=1}^{n-r-i+1} \binom{n-k}{j}\binom{n-k-j-\ell}{r-k-j-1}\binom{n-r-\ell}{i-1}d_j(x) x^{r-k-j-1} \\
    &= \sum_{k=0}^{r-1}\sum_{s=k}^{r-1}\sum_{\ell=1}^{n-r-i+1}\binom{n}{s}\binom{s}{k}\binom{n-s-\ell}{r-s-1}\binom{n-r-\ell}{i-1}d_{s-k}(x)x^{r-s} \\
    &= \sum_{s=0}^{r-1} \binom{n}{s} \left[\sum_{\ell=1}^{n-r-i+1}\binom{n-s-\ell}{r-s-1}\binom{n-r-\ell}{i-1} \right] \sum_{k=0}^s \binom{s}{k}d_{s-k}(x) \cdot x^{r-s} \\
    &= \sum_{s=0}^{r-1} \binom{n}{s} \left[\sum_{\ell=1}^{n-r-i+1}\binom{n-s-\ell}{r-s-1}\binom{n-r-\ell}{i-1} \right] A_s(x) \cdot x^{r-s}, \\
\end{align*}
where the first three equalities follow by rearranging and reindexing the sums and the last one comes from \eqref{eq:eulerian-derangement-identity}.
\end{proof}

\section{Zeros of refined Hodge--Poincaré polynomials}\label{sec:realRooted}
In this section we prove Theorems \ref{thm:main} and \ref{thm:main-augmented}. To do so, we consider the involution $\mathcal I_{r}$ defined on polynomials of degree at most $r$
by \[\mathcal I_r(f(x)) = x^rf(x^{-1}).\] Clearly, if \(f\) is a polynomial of degree at most \(r\), then \(f\) is real-rooted if and only if \(\mathcal I_r(f)\) is, since the nonzero roots of \(f\) are the reciprocals of the nonzero roots of \(\mathcal{I}_{r}(f)\).

Let \(f(x)=a_{0}+a_{1}x+\cdots+a_{d}x^{d}\). We say that $f(x)$ is \emph{real-rooted} if all of its complex zeros are real. 
A finite sequence of nonnegative real numbers \( (a_{0},a_{1},\dots,a_{d}) \) is \emph{log-concave} if \( a_{k}^{2}\geq a_{k-1}a_{k+1} \) for every $1\leq k\leq d-1$. 
It is \emph{unimodal} if there is an index $0\leq m\leq d$ such that 
\[ 
a_{0}\leq a_{1}\leq\cdots\leq a_{m} \geq a_{m+1}\geq\cdots\geq a_{d}. 
\] 
It has no \emph{internal zeros} if $ a_{i} = 0 $ implies that $ a_{i-1} a_{i+1} = 0 $.

We say that a polynomial is log-concave, unimodal, or has no internal zeros if its coefficient sequence has the corresponding 
property. The following implications are standard. 

\begin{proposition}\label{prop:hierarchy} 
    Let $f(x)$ be a polynomial with nonnegative coefficients and no internal zeros. Then \[ f(x)\text{ is real-rooted} \quad\Longrightarrow\quad f(x)\text{ is log-concave} \quad\Longrightarrow\quad f(x)\text{ is unimodal}. \] 
\end{proposition}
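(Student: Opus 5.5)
The plan is to prove the two implications separately, writing \(f(x)=\sum_{k=0}^d a_k x^k\) with \(a_k\ge 0\).

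\emph{Real-rooted implies log-concave.} This is Newton's inequality. If \(f\) is real-rooted of degree \(d\), write \(a_k=\binom{d}{k}b_k\); Newton's inequalities then give
\[
b_k^2\ge b_{k-1}b_{k+1}.
\]
Since \(\binom{d}{k}^2\ge\binom{d}{k-1}\binom{d}{k+1}\), this yields \(a_k^2\ge a_{k-1}a_{k+1}\).

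To obtain Newton's inequalities I will use Rolle's theorem. Differentiating \(k-1\) times preserves real-rootedness. Applying \(\mathcal I_{d-k+1}\) afterwards also preserves real-rootedness, as noted above. Differentiating a further \(d-k-1\) times reduces \(f\) to a real-rooted quadratic whose coefficients are positive multiples of \(a_{k-1},a_k,a_{k+1}\). The discriminant condition for this quadratic is exactly \(b_k^2\ge b_{k-1}b_{k+1}\). The only real bookkeeping is checking the constants in this reduction, and that step is routine. If \(f\) has smaller degree than the ambient bound, I simply work with its actual degree.

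\emph{Log-concave with no internal zeros implies unimodal.} First, the no-internal-zeros hypothesis makes the support of the coefficient sequence an interval \([p,q]\). Suppose a zero sat strictly inside the support. Taking the nearest zero to one side of a nonzero coefficient, the hypothesis forces a neighbour to vanish, and iterating this contradicts the existence of nonzero coefficients on both sides.

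On \([p,q]\) all coefficients are positive. Log-concavity then says that the ratios \(a_{k+1}/a_k\) are weakly decreasing for \(p\le k<q\). Let \(m\) be the last index with \(a_m/a_{m-1}\ge 1\), or \(m=p\) if there is none. The sequence then weakly increases up to \(m\) and weakly decreases after \(m\). Adding the zeros outside \([p,q]\) does not break unimodality.

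The main subtlety is this interval-support step, since log-concavity alone does not rule out sequences such as \((1,0,1)\). I expect everything else to be routine.
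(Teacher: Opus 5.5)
The paper gives no proof of this proposition; it only calls the implications standard. Your first implication is correct. The reduction yields the quadratic
\[
\tfrac{(k-1)!\,(d-k+1)!}{2}\,a_{k-1}x^{2}+k!\,(d-k)!\,a_{k}x+\tfrac{(k+1)!\,(d-k-1)!}{2}\,a_{k+1},
\]
and its discriminant condition is exactly $b_k^2\ge b_{k-1}b_{k+1}$. Your ratio argument is also correct once the support is known to be an interval.

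\textbf{The gap.} It is exactly the step you flagged. You derive interval support from the hypothesis as the paper states it, namely that $a_i=0$ implies $a_{i-1}a_{i+1}=0$. This local condition does not propagate. If $a_j$ is the first zero after a nonzero $a_{j-1}$, the condition does force $a_{j+1}=0$. But the condition at index $j+1$ then reads $a_ja_{j+2}=0$, which already holds because $a_j=0$, so nothing is forced on $a_{j+2}$.

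Concretely, take $f(x)=1+x^3$, with coefficients $(1,0,0,1)$. They are nonnegative and satisfy the local condition at $i=1,2$. They are log-concave, since both inequalities read $0\ge 0$. Yet the sequence is not unimodal. So with the paper's literal definition the second implication is false, and no argument can establish your interval step.

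\textbf{The repair.} Use the standard global notion of having no internal zeros: there are no indices $i<j<k$ with $a_ia_k\neq 0$ and $a_j=0$. This says precisely that the support is an interval $[p,q]$. Your step then becomes a tautology, and the rest of your proof goes through unchanged.

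Along the chain that starts from real-rootedness, no such hypothesis is needed. A real-rooted $f\neq 0$ with nonnegative coefficients has no positive zeros. Hence it factors as $c\,x^p\prod_i(x+\alpha_i)$ with $c>0$ and all $\alpha_i>0$. Its coefficients are therefore positive exactly in degrees $p,\dots,\deg f$.
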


Interlacing gives a useful way to compare the zeros of two real-rooted polynomials. 
Let $f(x)$ and $g(x)$ be real-rooted polynomials with positive leading coefficients. Write the zeros of $f$ and $g$, counted with multiplicity, as $\cdots\leq\alpha_{2}\leq\alpha_{1}$ and $\cdots\leq\beta_{2}\leq\beta_{1}$, respectively.
We say that $g$ \emph{weakly interlaces} $f$ if \[ 
\cdots\leq\beta_{2}\leq\alpha_{2} \leq\beta_{1}\leq\alpha_{1}. 
\]
In this case we write $ g \prec f $.

\subsection{The deranged map and the Eulerian transformation}\label{section:derangedMap}

Brändén and Solus study the zeros of Eulerian and derangement polynomials and colored derangement polynomials by defining the \emph{deranged map} \cite[Section~3.2]{branden-solus}.
This is the unique linear operator $\mathcal D:\mathbb R[x]\longrightarrow\mathbb R[x]$ satisfying 
\[ 
\mathcal D(x^{n})=d_{n}(x) 
\] 
for every $n\geq0$.

For a fixed nonnegative integer $r$, consider the basis $\mathfrak B_{r} = \left\{ x^{k}(1+x)^{r-k}:0\leq k\leq r \right\}$ of the vector space of polynomials of degree at most $r$. 
We refer to $\mathfrak B_{r}$ as the \emph{magic basis}. 
The following result of Brändén and Solus is the main real-rootedness criterion that we will use.
\begin{theorem}[{\cite[Corollary~3.7]{branden-solus}}]\label{thm:branden-solus} 
Let $f(x) = \sum_{k=0}^{r}h_{k}x^{k}(1+x)^{r-k}$, with $h_{k}\geq0$. 
Then $A_{r}(x)\prec\mathcal D(f)\prec d_{r}(x)$.
In particular, $\mathcal D(f)$ is real-rooted.
\end{theorem}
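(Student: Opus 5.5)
The plan is to reduce the statement to the elements of the magic basis, using linearity of $\mathcal D$, and then to establish an interlacing chain for the images of those basis elements.

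\textbf{Reduction to basis elements.} For a fixed real-rooted polynomial $g$ with positive leading coefficient, the sets $\{h : g \prec h\}$ and $\{h : h \prec g\}$ are convex cones. This is the standard fact that interlacing with a fixed polynomial is preserved under nonnegative combinations. Since $\mathcal D$ is linear and $h_k \geq 0$, it therefore suffices to show
\[
A_r(x) \prec p_{r,k}(x) \prec d_r(x), \qquad \text{where } p_{r,k}(x) := \mathcal D\bigl(x^k(1+x)^{r-k}\bigr), \quad 0 \le k \le r.
\]

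\textbf{Combinatorial description.} Expanding $(1+x)^{r-k}$ gives
\[
p_{r,k}(x)=\sum_{j}\binom{r-k}{j}d_{k+j}(x).
\]
Sorting permutations of $[r]$ by their set of fixed points shows that $p_{r,k}$ is the excedance generating polynomial of the permutations of $[r]$ having no fixed point in $\{1,\dots,k\}$. In particular $p_{r,0}=A_r$ by \eqref{eq:eulerian-derangement-identity}, and $p_{r,r}=d_r$. Conditioning on whether $k+1$ is a fixed point gives the recursion
\[
p_{r,k}=p_{r,k+1}+p_{r-1,k}.
\]

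\textbf{Strengthened claim.} The goal becomes showing that $(p_{r,0},p_{r,1},\dots,p_{r,r})$ is an \emph{interlacing sequence}, meaning $p_{r,i}\prec p_{r,j}$ for all $i<j$. I expect the orientation to be this one, but it must be checked on small cases such as $r=2$. The required statement is then the special case of comparing each $p_{r,k}$ with the two endpoints $p_{r,0}=A_r$ and $p_{r,r}=d_r$.

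\textbf{Induction on $r$.} I will prove the strengthened claim by induction on $r$. Inserting the letter $r$ into a permutation of $[r-1]$, either as a fixed point or into a cycle, yields a recursion of the form
\[
p_{r,k}(x) = \bigl(\alpha x+\beta\bigr)p_{r-1,k}(x) + x(1-x)\,p_{r-1,k}'(x) + \epsilon\, p_{r-1,k}(x).
\]
Here the coefficients $\alpha,\beta,\epsilon$ depend on $r$ and $k$, and one has to separate the case where the new letter is constrained, using the recursion in $k$ above. I would then invoke Brändén's criterion for linear maps, given by matrices of operators such as multiplication by $x$ and $x(1-x)\,d/dx$, that send interlacing sequences to interlacing sequences. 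The map $f\mapsto (ax+b)f+x(1-x)f'$ preserves the relation $\prec$ when $a,b\ge 0$, by a sign-alternation argument at the zeros of $f$. Combined with closure of interlacing sequences under the triangular summation $p_{r,k}=\sum_{m\ge k}p_{r-1,m}$, this propagates the interlacing property from level $r-1$ to level $r$.

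\textbf{Main obstacle.} The hardest step is pinning down the exact recursion in $r$ and showing it is of the form covered by the interlacing-preservation criterion, in particular that the sign conditions hold and the orientation of $\prec$ is consistent. If the direct route fails, I would instead verify the interlacing by checking sign changes of $p_{r,k}$ at the zeros of $p_{r,k+1}$, using the recursion $p_{r,k}-p_{r,k+1}=p_{r-1,k}$ together with the induction hypothesis.
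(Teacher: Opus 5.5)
You have the right framework but not a proof: the induction step that carries all the content is not carried out, and the mechanism you sketch for it fails. The paper itself gives no proof of this statement; it quotes it from Br\"and\'en--Solus.

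\textbf{What is correct.} Your preliminary steps hold. Interlacing with a fixed polynomial is a convex-cone condition. The polynomial $p_{r,k}=\mathcal D(x^k(1+x)^{r-k})=\sum_j\binom{r-k}{j}d_{k+j}$ is the excedance polynomial of permutations of $[r]$ with no fixed point in $[k]$. The recursion $p_{r,k}=p_{r,k+1}+p_{r-1,k}$ holds, and the orientation $p_{r,i}\prec p_{r,j}$ for $i<j$ is the right one. But the reduction is only a reformulation: the theorem applied to $f=x^k(1+x)^{r-k}$ says exactly $A_r\prec p_{r,k}\prec d_r$.

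\textbf{The tail-sum step.} Iterating your recursion gives $p_{r,k}=\sum_{m=k}^{r-1}p_{r-1,m}+d_r$, not $\sum_{m\ge k}p_{r-1,m}$. The extra $d_r$ cannot be absorbed by closure under tail sums, because the extended sequence $(p_{r-1,0},\dots,p_{r-1,r-1},d_r)$ is not interlacing. For $r=4$, both zeros $-2\pm\sqrt3$ of $A_3$ lie strictly between the two negative zeros $(-7\pm\sqrt{45})/2$ of $d_4=x(1+7x+x^2)$, so $A_3\not\prec d_4$.

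\textbf{The insertion recursion.} Its shape differs from your ansatz. For $k\le r-1$ the letter $r$ is unconstrained. The correction comes from permutations in which $r$ forms a $2$-cycle with some $i\in[k]$, since deleting $r$ then creates a forbidden fixed point. This gives $p_{r,k}=(1+(r-1)x)p_{r-1,k}+x(1-x)p_{r-1,k}'+kx\,p_{r-2,k-1}$. Using $p_{r-2,k-1}=p_{r-1,k-1}-p_{r-1,k}$, this becomes $p_{r,k}=(1+(r-1-k)x)p_{r-1,k}+x(1-x)p_{r-1,k}'+kx\,p_{r-1,k-1}$. The diagonal part sends $p_{r-1,k}$, of degree $r-2$, to a polynomial with leading coefficient $1-k$, which is negative for $k\ge 2$. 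For instance, $(1+x)p_{3,2}+x(1-x)p_{3,2}'=-x(x-4)(x+1)$ has the positive zero $4$.

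So your claim that $f\mapsto(ax+b)f+x(1-x)f'$ preserves $\prec$ whenever $a,b\ge 0$ is false; it breaks once $a<\deg f$. Interlacing therefore cannot be propagated term by term. Positivity of $p_{r,k}$ is restored only by the cross term $kx\,p_{r-1,k-1}$. You would need an argument that handles the derivative term and the cross term together, for all $k$ at once, and none is given.

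\textbf{The fallback.} It has the same gap. At a zero $\alpha$ of $p_{r,k+1}$ you have $p_{r,k}(\alpha)=p_{r-1,k}(\alpha)$. So you would need a cross-level relation such as $p_{r-1,k}\prec p_{r,k+1}$, which the within-level induction hypothesis does not supply.
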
 

Similarly, one can define the \emph{Eulerian transformation} \(\mathcal A^\circ: \mathbb R[x] \to \mathbb R[x]\) as the unique linear operator satisfying
\[
\mathcal A^\circ(1) = 1 \quad \text{and} \quad\mathcal A^\circ(x^n) = xA_n(x).
\]
The following result was first conjectured by Brändén and Jochemko \cite{branden-jochemko} and later proved by Athanasiadis \cite{athanasiadis}.
\begin{theorem}[{\cite[Theorem~1.1]{athanasiadis}}]\label{thm:eulerian-real-rooted}
    Let $f(x) = \sum_{k=0}^{r}h_{k}x^{k}(1+x)^{r-k}$, with $h_{k}\geq0$. 
Then $\mathcal A^\circ((1+x)^r)\prec\mathcal A^\circ(f)\prec xA_r(x)$.
In particular, $\mathcal A^\circ(f)$ is real-rooted.
\end{theorem}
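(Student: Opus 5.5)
The theorem is quoted from Athanasiadis, and here is how I would reprove it. The plan is to reduce the statement to an interlacing property of the images of the magic basis. Set
\[
g_{r,k}(x) = \mathcal A^\circ\bigl(x^k(1+x)^{r-k}\bigr), \qquad 0\le k\le r,
\]
so that $g_{r,0}=\mathcal A^\circ((1+x)^r)$ and $g_{r,r}=xA_r(x)$. By linearity, $\mathcal A^\circ(f)=\sum_k h_k g_{r,k}$.

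Recall the standard fact that the set of real-rooted polynomials with nonnegative coefficients interlacing a fixed polynomial is a convex cone. Consequently, if $(g_{r,0},g_{r,1},\dots,g_{r,r})$ is an \emph{interlacing sequence}, meaning $g_{r,i}\prec g_{r,j}$ for all $i<j$, then every nonnegative combination satisfies $g_{r,0}\prec \sum_k h_kg_{r,k}\prec g_{r,r}$. This is exactly the claimed chain $\mathcal A^\circ((1+x)^r)\prec\mathcal A^\circ(f)\prec xA_r(x)$, and real-rootedness follows. So everything reduces to showing that $(g_{r,k})_k$ is an interlacing sequence.

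To prove this I would proceed by induction on $r$, using a recursion for $g_{r,k}$. Expanding the binomial gives
\[
g_{r,k}=\sum_{j=0}^{r-k}\binom{r-k}{j}\,\mathcal A^\circ(x^{k+j}).
\]
I would combine this with the Eulerian recursion $A_{m+1}(x)=(1+mx)A_m(x)+x(1-x)A_m'(x)$ and with the relation $x^k(1+x)^{r-k}=x^k(1+x)^{r-k-1}+x^{k+1}(1+x)^{r-k-1}$. The aim is to express each $g_{r,k}$ as a nonnegative combination of $g_{r-1,i}$ for $i<k$ and for $i\ge k$, possibly multiplied by $x$, in the shape
\[
g_{r,k}=\sum_{i<k}x\,g_{r-1,i}+\sum_{i\ge k}g_{r-1,i},
\]
or a variant with different multipliers. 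Recursions of this shape are known to map interlacing sequences to interlacing sequences; this is the lemma of Haglund--Visontai, or Brändén's lemma on the $(x\cdot,\ 1\cdot)$ split. The base case $r=0$ is trivial.

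The cleanest way to obtain such a recursion is combinatorial. I would try to identify $g_{r,k}$ with a refined Eulerian polynomial, namely the excedance (or descent) generating function of permutations of $[r+1]$ whose first letter, or the value at a fixed position, is determined by $k$. Brenti's refined Eulerian polynomials $A_{n,k}$ already satisfy the required recursion and are known to form an interlacing sequence. Checking the identification at small $r$ (say $r=1,2$, where $g_{1,0}=1+x$ and $g_{1,1}=x$) will guide the exact statistic.

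I expect the main obstacle to be this identification, or equivalently deriving the correct recursion for $g_{r,k}$. The constant term convention $\mathcal A^\circ(1)=1$ breaks the uniform pattern $\mathcal A^\circ(x^m)=xA_m(x)$, so the $k=0$ term needs separate care. Once the recursion is in place, the interlacing-preservation step is routine.
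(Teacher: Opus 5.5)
Your reduction is sound. With $g_{r,k}=\mathcal A^\circ(x^k(1+x)^{r-k})$, the convex-cone property of interlacing shows it suffices to prove $g_{r,0}\prec g_{r,k}\prec g_{r,r}$ for all $k$. The paper gives no proof here and simply imports the theorem from Athanasiadis. But that interlacing is the entire content of the theorem, and you leave it unproved. Moreover, both concrete routes you suggest fail.

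\textbf{The proposed recursion.} The shape $g_{r,k}=\sum_{i<k}x\,g_{r-1,i}+\sum_{i\ge k}g_{r-1,i}$ is impossible for $k=0$. There the right side has degree $r-1$, while $g_{r,0}$ has degree $r$; already $g_{1,0}=1+x\neq g_{0,0}=1$.

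\textbf{The proposed identification.} $g_{r,k}$ cannot be a refined Eulerian polynomial on $\mathfrak S_{r+1}$, nor a cumulative sum of such. Indeed $g_{r,k}(1)=\sum_{j}\binom{r-k}{j}(k+j)!$, which is $(5,3,2)$ for $r=2$, not $(2,2,2)$ or $(2,4,6)$. The natural ground set is instead arrangements of subsets of $[r]$ containing a fixed $k$-set. In particular, $g_{r,0}=1+x\sum_{j\ge 1}\binom{r}{j}A_j(x)$ is the binomial Eulerian polynomial.

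\textbf{What the splitting identity actually gives.} It gives $g_{r,k}=g_{r-1,k}+g_{r,k+1}$, where the second summand stays at level $r$. Iterating, $g_{r,k}=xA_r(x)+\sum_{i=k}^{r-1}g_{r-1,i}$. Taking tail sums would finish the induction if $(g_{r-1,0},\dots,g_{r-1,r-1},xA_r(x))$ were an interlacing sequence. It is not. The zeros $0$ and $-2+\sqrt3$ of $xA_3(x)=x(1+4x+x^2)$ both lie to the right of the largest zero $(-3+\sqrt5)/2$ of $g_{2,0}=1+3x+x^2$, so $g_{2,0}\not\prec xA_3(x)$. Yet $g_{3,0}=(1+x)(1+6x+x^2)\prec g_{3,3}$ does hold.

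So no termwise cone argument on this recursion closes the induction. The Haglund--Visontai lemma has nothing to act on until you produce a different decomposition or combinatorial model in which the $g_{r,k}$ genuinely arise from an interlacing sequence via an interlacing-preserving recursion. That missing ingredient is exactly what the cited theorem of Athanasiadis supplies. As written, your proposal proves only the easy reduction step.
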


\begin{remark}
    The operators \(\mathcal D\) and \(\mathcal A^\circ\) were used by Brändén and the second author to prove the real-rootedness of \(\underline{\H}_{\U_{r,n}}(x)\) \cite[Theorem~1.1]{brandenVecchiUniform} and to give an alternative proof of the real-rootedness of \(\H_{\U_{r,n}}(x)\) \cite[Theorem~3.1]{brandenVecchiUniform}.
    In a second paper, these maps were generalized to provide real-rootedness results for the (augmented) Chow polynomials of totally nonnegative posets \cite{brandenVecchiNonnegative}. These polynomials generalize $\underline{\H}_\M(x)$ and $\H_{\M}(x)$ from geometric lattices to arbitrary finite graded bounded posets.
\end{remark}

\subsection{Zeros of Hodge--Poincaré polynomials}

We start by showing the real-rootedness of the refined Hodge--Poincaré polynomials for uniform matroids. This settles \cite[Conjecture~5.8]{binderSingularCohomologyUniform}
\begin{proof}[Proof of Theorem~\ref{thm:main}]
    Fix a refined Hodge--Poincar\'{e} polynomial $ \underline{\H}^{i}_{\U_{r,n}}(x) $. If $ i=0 $, this corresponds to the Chow polynomial which is real-rooted by \cite[Theorem~1.1]{brandenVecchiUniform}. So let us assume that $ i \geq 1 $.
    Define the polynomial 
    \[ 
        f(x) = \sum_{k=0}^{r-1} \sum_{m=0}^{k} \binom{n-m-1}{r+i-m-1} \binom{i+k - m - 1}{i-1} x^{r-1-k}(1+x)^{k}.
    \]
    By Theorem \ref{thm:branden-solus}, $ \mathcal{D}(f) $ is real-rooted. Therefore $ \mathcal{I}_{r-1} \circ 
    \mathcal{D}(f) $ is real-rooted as well. We complete the proof by showing that
    $\underline{\H}^{i}_{\U_{r,n}}(x) = (\mathcal{I}_{r-1} \circ \mathcal{D})(f) $. 
    
    Expanding $ f(x) $,
    \begin{align*}
        f(x) &= \sum_{k=0}^{r-1} \sum_{m=0}^{k} \binom{n-m-1}{r+i-m-1} \binom{i+k - m - 1}{i-1}x^{r-1-k}(1+x)^{k} \\
        &= \sum_{k=0}^{r-1}\sum_{j=0}^{k}\sum_{m=0}^{k} \binom{n-m-1}{r-m+i-1} \binom{i+k-m-1}{i-1} \binom{k}{j}x^{r-1-k+j} \\
        &= \sum_{k=0}^{r-1}\sum_{j=0}^{r-k-1}\sum_{m=0}^{r-k-1}  \binom{n-m-1}{r-m+i-1} \binom{i+r-k-1-m-1}{i-1}\binom{r-k-1}{j}x^{k+j} \\
        &= \sum_{k=0}^{r-1}\sum_{j=k}^{r-1}\sum_{m=0}^{r-k-1} \binom{n-m-1}{r-m+i-1} \binom{i+r-k-1-m-1}{i-1} \binom{r-k-1}{j-k}x^{j} \\
        &= \sum_{j=0}^{r-1}\sum_{k=0}^{j}\sum_{m=0}^{r-k-1} \binom{n-m-1}{r-m+i-1} \binom{i+r-k-1-m-1}{i-1} \binom{r-k-1}{j-k}x^{j}. 
    \end{align*}
    We now observe that for every $0\leq j \leq r-1$ and $j\neq 1$ the coefficient of $x^j$ in $f(x)$ coincides with $\left|W_{j,i}(\U_{r,n})\right|$ by Lemma \ref{lem:count-retral}.
    Applying $ \mathcal{D} $ and $ \mathcal{I}_{r-1} $ yields
    \begin{align*}
        \mathcal{D}(f) &=   \sum_{j=0}^{r-1} \left| W_{j,i}(\U_{r,n})\right|d_{j}(x) \\ 
        (\mathcal{I}_{r-1} \circ \mathcal{D})(f) &= \sum_{j=0}^{r-1} \left| W_{j,i}(\U_{r,n})\right|d_{j}(x) \cdot x^{r-j-1},
    \end{align*}
    where the first equality holds because $d_1(x) = 0$ and in the second we use the fact that $d_j(x)$ is palindromic with center of symmetry \(j/2\), i.e.,
    \(\mathcal I_j(d_j(x)) = d_j(x)\)
    .
    Therefore $ \underline{\H}^{i}_{\U_{r,n}}(x) = (\mathcal{I}_{r-1} \circ \mathcal{D})(f) $ by Lemma \ref{lem:hpDerangement}.
\end{proof}

We now move on to consider the augmented case. Using \eqref{eq:augmented-hp} and Proposition \ref{prop:formula-Htilde}, it is not hard to see that the polynomials \(\H^i_{\U_{r,n}}(x)\) are not real-rooted in general.
\begin{example}
Direct computation gives
\[
\H^4_{\U_{3,8}}(x) = 140x + 372x^2 + 259x^3,
\]
which has a pair of complex-conjugate roots. Indeed, after factoring out an \(x\), the remaining degree-2 polynomial has discriminant
\[
372^2 - 4\cdot259\cdot 140 = - 6656 < 0.
\]
Unimodality also fails, although this happens for a much larger example. For example,
\[
\H^{24}_{\U_{4,30}}(x) = 1319500x + 9083815x^2 + 8789785x^3 + 8811999x^4.
\]
\end{example}
The failure of these nice distributional properties can be traced back to the failure of the quasi-projective Strong Lefschetz property, or, more specifically, to the contribution of basis elements from \(\mathcal B^{\IN}_{\U_{r,n}}\); see also Remark~\ref{rem:failure-lefschetz}. Indeed, the degree-\(4\) coefficient of \(\H^{24}_{\U_{4,30}}(x)\) is also accounting, by Proposition~\ref{prop:formula-B-IN}, for the \(\binom{30}{28}\binom{27}{23}\) elements of degree \(28\) in \(\mathcal B^{\IN}_{\U_{4,30}}\). Removing those yields a polynomial that is again unimodal and, indeed, also real-rooted. This is another reason why we want to consider the modified augmented singular cohomology ring. As a consequence of Theorem~\ref{thm:lefschetzModAug}, all polynomials \(\widetilde{\H}^i_{\U_{r,n}}(x)\) are unimodal. We will now prove Theorem~\ref{thm:main-augmented}, which states the stronger property of real-rootedness. We prepare the proof with some preliminary results.

\begin{lemma}\label{lem:Eulerian-magic-positivity}
For $1\leq i\leq n-r$, we have
\[
\begin{aligned}
&\sum_{s=0}^{r-1}\binom{n}{s}
\left[\sum_{\ell=1}^{n-r-i+1}\binom{n-s-\ell}{r-s-1}\binom{n-r-\ell}{i-1}\right]x^s \\
&\qquad= \sum_{p=0}^{r-1} \sum_{\ell=1}^{n-r-i+1} \binom{n-r-\ell}{i-1} \binom{n-\ell-p}{r-p-1} \binom{\ell-1+p}{p} x^p(1+x)^{r-1-p}.
\end{aligned}
\]
In particular, the polynomial on the left has a nonnegative expansion in the magic basis.
\end{lemma}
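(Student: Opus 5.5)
The plan is to prove the identity coefficient by coefficient, one value of $\ell$ at a time. Both sides are sums over $\ell$ carrying the common factor $\binom{n-r-\ell}{i-1}$, so it suffices to prove, for every fixed $1\leq \ell\leq n-r-i+1$, the identity
\[
\sum_{s=0}^{r-1}\binom{n}{s}\binom{n-s-\ell}{r-s-1}x^s=\sum_{p=0}^{r-1}\binom{n-\ell-p}{r-p-1}\binom{\ell-1+p}{p}x^p(1+x)^{r-1-p}.
\]
The nonnegativity claim is then immediate, since every coefficient on the right of the statement is a product of binomial coefficients with nonnegative top entries. Here $\ell\leq n-r$, so $n-\ell-p\geq r-p\geq 0$.

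\textbf{Extracting the coefficient of $x^s$.} Expand $(1+x)^{r-1-p}=\sum_{s}\binom{r-1-p}{s-p}x^{s}$. The coefficient of $x^s$ on the right-hand side is then
\[
\sum_{p=0}^{s}\binom{\ell-1+p}{p}\binom{n-\ell-p}{r-1-p}\binom{r-1-p}{s-p}.
\]
Apply the subset-of-a-subset identity $\binom{N}{K}\binom{K}{M}=\binom{N}{M}\binom{N-M}{K-M}$ with $N=n-\ell-p$, $K=r-1-p$ and $M=s-p$. This rewrites the last two factors as $\binom{n-\ell-p}{s-p}\binom{n-\ell-s}{r-1-s}$. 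The second factor no longer depends on $p$, so it can be pulled out of the sum. It matches the factor $\binom{n-s-\ell}{r-s-1}$ on the left-hand side.

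\textbf{The key summation.} It remains to show
\[
\sum_{p=0}^{s}\binom{\ell-1+p}{\ell-1}\binom{n-\ell-p}{s-p}=\binom{n}{s}.
\]
This is the main step, although it is a standard Vandermonde-type identity. Rewrite $\binom{n-\ell-p}{s-p}=\binom{n-\ell-p}{n-\ell-s}$, which is valid since $s\leq r-1<n-\ell$, and set $k=\ell-1+p$. The sum becomes $\sum_k\binom{k}{\ell-1}\binom{n-1-k}{n-\ell-s}$. The classical identity $\sum_k\binom{k}{a}\binom{m-k}{b}=\binom{m+1}{a+b+1}$ then gives $\binom{n}{n-s}=\binom{n}{s}$.

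To see that the classical identity applies here, classify the $(a+b+1)$-subsets of $[m+1]$ by their $(a+1)$-st smallest element. In our case the index range $p=0,\dots,s$ corresponds exactly to the range of $k$ where both binomials are nonzero. I would double-check this range bookkeeping, and the edge cases $s=0$ and $\ell=1$, since that is the only place where the argument could slip.
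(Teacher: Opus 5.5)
Your proposal is correct and follows the paper's proof essentially step for step: you fix $\ell$, extract the coefficient of $x^s$, use the subset-of-a-subset identity to pull out $\binom{n-\ell-s}{r-1-s}$, and finish with the Vandermonde-type convolution $\sum_{p=0}^{s}\binom{\ell-1+p}{p}\binom{n-\ell-p}{s-p}=\binom{n}{s}$. The range bookkeeping you flagged is fine: $\ell\leq n-r$ and $s\leq r-1$ give $n-\ell-s\geq 1$, so the index range $p=0,\dots,s$ is exactly the support of the summand, and the edge cases $s=0$ and $\ell=1$ check out.
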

\begin{proof}
Interchanging the sums on the left-hand side gives
\[
\sum_{\ell=1}^{n-r-i+1} \binom{n-r-\ell}{i-1} \sum_{s=0}^{r-1} \binom{n}{s} \binom{n-s-\ell}{r-s-1}x^s.
\]
It therefore suffices to prove, for every fixed $\ell$, that
\[
\sum_{s=0}^{r-1} \binom{n}{s} \binom{n-s-\ell}{r-s-1}x^s = \sum_{p=0}^{r-1} \binom{n-\ell-p}{r-p-1} \binom{\ell-1+p}{p} x^p(1+x)^{r-1-p}.
\]
Expanding $(1+x)^{r-1-p}$ and collecting the coefficient of $x^s$ on the right-hand side yields
\begin{align*}
&\sum_{p=0}^{s} \binom{r-1-p}{s-p} \binom{n-\ell-p}{r-1-p} \binom{\ell-1+p}{p} \\
&\qquad= \binom{n-\ell-s}{r-1-s} \sum_{p=0}^{s} \binom{n-\ell-p}{s-p} \binom{\ell-1+p}{p} \\
&\qquad= \binom{n-\ell-s}{r-1-s} \binom{n}{s},
\end{align*}
where the first identity is a standard manipulation and the second is a Vandermonde convolution. Thus the two polynomials agree coefficientwise, hence the result.
\end{proof}

\begin{proof}[Proof of Theorem~\ref{thm:main-augmented}]
For \(i=0\), \(\widetilde{\H}^0_{\U_{r,n}}(x)\) is the augmented Chow polynomial, which is real-rooted by \cite[Theorem~1.10]{FMSV}. We can then focus on the case when \(i\geq 1\). The operator \(\mathcal I_r\) preserves real-rootedness, thus we study \(\mathcal I_r(\widetilde{\H}^i_{\U_{r,n}}(x))\) and prove that this polynomial has only real zeros.
Since \(A_0(x) = 1\) and \(A_s(x)\) is palindromic with center of symmetry \(\tfrac{s-1}{2}\) for \(s\geq 1\), we can now consider \(\mathcal I_r(\widetilde{\H}^i_{\U_{r,n}}(x))\) and write
\begin{align*}
&\mathcal I_r(\widetilde{\H}^i_{\U_{r,n}}(x))\\ =& \sum_{\ell=1}^{n-r-i+1}\binom{n-\ell}{r-1}\binom{n-r-\ell}{i-1} + \sum_{s=1}^{r-1} \binom{n}{s} \left[\sum_{\ell=1}^{n-r-i+1}\binom{n-s-\ell}{r-s-1}\binom{n-r-\ell}{i-1} \right] xA_s(x).
\end{align*}
This means that if we let
\[
f(x) = \sum_{s=0}^{r-1} \binom{n}{s} \left[\sum_{\ell=1}^{n-r-i+1}\binom{n-s-\ell}{r-s-1}\binom{n-r-\ell}{i-1} \right] x^s,
\]
then \(\mathcal A^\circ(f(x)) = \mathcal I_r(\widetilde{\H}^i_{\U_{r,n}}(x))\).
By Lemma \ref{lem:Eulerian-magic-positivity}, \(f(x)\) has non-negative expansion in the magic basis and by Theorem \ref{thm:eulerian-real-rooted}, the polynomial \(\mathcal A^\circ(f(x))\) is real-rooted. 
\end{proof}

\bibliographystyle{amsalpha}
\bibliography{bibliography.bib}

\end{document}